\documentclass[11pt,a4paper]{amsart}

\usepackage{a4wide}
\usepackage{graphicx}
\usepackage{latexsym}
\usepackage{epsfig}
\usepackage{amssymb}
\usepackage{amstext,amsmath}
\usepackage{amsgen}
\usepackage{amsxtra}
\usepackage{amsgen}
\usepackage{amsthm}
\usepackage{tcolorbox}
\usepackage{mathrsfs,dsfont,scalerel}

\usepackage{hyperref}
\usepackage{comment}

\usepackage{bm}

\usepackage{mathtools}

\newtheorem{thm}{Theorem}[]
\newtheorem{prop}{Proposition}[section]
\newtheorem{lemma}[prop]{Lemma}
\newtheorem{cor}[prop]{Corollary}

\theoremstyle{remark}
\newtheorem{rem}[prop]{Remark}

\theoremstyle{definition}

\newtheorem{definition}[prop]{Definition}

\numberwithin{equation}{section}

\DeclareMathOperator{\dv}{div}
\DeclareMathOperator{\esssup}{ess \, sup}

\newcommand{\pe}{\mathrm{Pe}}

\renewcommand{\i}{\mathrm{i}}
\newcommand{\n}{\mathbf{n}}

\newcommand{\bxi}{\boldsymbol{\xi}}

\newcommand{\p}{{\bf p}}

\newcommand{\e}{{\bf e}}

\renewcommand{\d}{\, \mathrm{d} }
\newcommand{\de}{\mathrm{d}}

\newcommand{\per}{{\mathrm{per}}}

\newcommand{\id}{\mathrm{id}}
\renewcommand{\i}{\mathrm{i}}

\usepackage{xcolor}

\begin{document}
\title[Crowded active Brownian system with size-exclusion]{Well-posedness and stationary states for a crowded active Brownian system with size-exclusion}


\author[M.~Burger]{Martin Burger}
\address[M.~Burger]{Computational Imaging Group and Helmholtz Imaging, Deutsches Elektronen-Synchrotron (DESY), Notkestr. 85, 22607 Hamburg, Germany, and Fachbereich Mathematik, Universität Hamburg, Bundesstra{\ss}e 55, 20146 Hamburg, Germany}
\email{martin.burger@desy.de}

\author[S.~M.~Schulz]{Simon Schulz}
\address[S.~M.~Schulz]{Scuola Normale Superiore, Centro di Ricerca Matematica Ennio De Giorgi, P.zza dei Cavalieri, 3,  56126 Pisa, Italy}\email{simon.schulz@sns.it}

\maketitle
\begin{abstract}

%
We prove the existence of solutions to a non-linear, non-local, degenerate equation which was previously derived as the formal hydrodynamic limit of an active Brownian particle system, where the particles are endowed with a position and an orientation. This equation incorporates diffusion in both the spatial and angular coordinates, as well as a non-linear non-local drift term, which depends on the angle-independent density. The spatial diffusion is non-linear degenerate and also comprises diffusion of the angle-independent density, which one may interpret as cross-diffusion with infinitely many species. Our proof relies on
interpreting the equation as the perturbation of a gradient flow in a Wasserstein-type space. It generalizes the boundedness-by-entropy method to this setting  and makes use of a gain of integrability due to the angular diffusion. For this latter step, we adapt a classical interpolation lemma for function spaces depending on time. We also prove uniqueness in the particular case where the non-local drift term is null, and provide existence and uniqueness results for stationary equilibrium solutions. 

\vspace{0.2cm}

\noindent{\bf Keywords:} Boundedness-by-entropy; active particles; cross-diffusion; gradient flow. 

\noindent{\bf AMS Subject Classification: } 35K65; 35K55; 76M30; 35Q92. 
%
\end{abstract}


\setcounter{tocdepth}{1}
\tableofcontents

\section{Introduction}\label{sec:intro}

We study the following non-local non-linear degenerate equation, which was derived in \cite{bbesModel} as the formal hydrodynamic limit of a many-particle system modeling the interaction of active Brownian particles endowed with a two-dimensional position $x \in \Omega = (0,2\pi)^2$ and orientation $\theta \in (0,2\pi)$: 
\begin{equation}\label{eq: model 4}
    \begin{aligned}
    & \partial_t f + \pe \dv \big(f(1-\rho) \e(\theta)\big) = D_e\dv \big((1-\rho)\nabla f + f \nabla \rho\big) + \partial^2_\theta f,
    \end{aligned}
\end{equation}
where $\rho(t,x) = \int_0^{2\pi} f(t,x,\theta) \d \theta$ is the \emph{angle-independent density}, 
$\e(\theta)=(\cos\theta,\sin\theta)$, the real constant parameters $\pe$ and $D_e>0$ are respectively the P\'eclet number and spatial diffusion coefficient, supplemented with initial data $f_0$ and periodic boundary conditions in both the angle and space variables; the periodic cell for the space-angle coordinates is denoted by $\Upsilon := \Omega \times (0,2\pi)$. We prove the existence of weak solutions to \eqref{eq: model 4}, show uniqueness of such solutions in the case of zero P\'eclet number, and provide existence and uniqueness results for stationary states under various assumptions on the P\'eclet number.

Active Brownian systems model the motion of self-propelled particles and have an abundance of applications; for instance in biology, pedestrian and traffic flows, and the modeling of collective behaviour in general (\textit{cf.~e.g.~}\cite{Romanczuk:2012iz,Cates:2013ia,Redner.2013,Cates:2014tr,Stenhammar:2015ex,Yeomans:2015dt,Speck:2015um}). Much like kinetic models, active Brownian systems depict the \emph{mesoscopic} dynamics of a group of interacting agents and, indeed, equation \eqref{eq: model 4} presents similarities with kinetic Fokker--Planck equations as well as non-linear degenerate parabolic systems. At first glance, \eqref{eq: model 4} appears similar to the cross-diffusion system studied in \cite{BoundEntropy}: 
\begin{equation}\label{eq:martin marco cross diff}
    \begin{aligned}
    &\partial_t r = \dv\big( (1-\sigma)\nabla r + r \nabla \sigma + r(1-\sigma)\nabla V \big), \\ 
    &\partial_t b = \dv\big( (1-\sigma)\nabla b + b \nabla \sigma + b(1-\sigma)\nabla W \big), 
    \end{aligned}
\end{equation}
where $\sigma = r + b$ and $V,W$ are given external potentials. Equation \eqref{eq: model 4} may be interpreted as the analog of this system with infinitely-many species; the diffusive term $\dv f \nabla \rho$ encodes the diffusion of the uncountable family of angles in the interval $(0,2\pi)$. However, unlike the system in \cite{BoundEntropy}, the equation \eqref{eq: model 4} is non-local; both in its diffusive and drift terms. This non-locality is a feature shared by certain kinetic models, in particular the Vlasov--Benney equation \cite{vlasovBenney1,vlasovBenney2} and the recent contribution \cite{BriantMeunier}. Moreover, the angle-independent density, which formally satisfies the strongly parabolic drift-diffusion equation 
\begin{equation*}
    \partial_t \rho + \dv\big( (1-\rho) \p \big) = \Delta \rho, 
\end{equation*}
where $\p = \int_0^{2\pi} f \e(\theta) \d \theta$, is expected to be more regular than $f$ as its governing equation is no longer degenerate. This mechanism is evocative of velocity averaging lemmas (see, \textit{e.g.}, \cite{PerthameKinetic}), illustrating another parallel between kinetic theory and active Brownian systems. The zones of degeneracy $\{\rho=1\}$ and $ \{f = 0\}$ comprised in \eqref{eq: model 4} make its analysis delicate as one expects the formation of discontinuities and other losses of regularity along these interfaces; this first degenerate region is all-the-more troublesome because one can only infer information about the local behaviour of $\rho$, not $f$. In turn, there exists relatively few known results in the existing literature for such equations.

To the authors' knowledge, there exists only one prior contribution studying equations similar to \eqref{eq: model 4} (except the authors' own work \cite{bbes}, which has a different diffusion model and thus less degeneracy), namely the result of Erignoux \cite[Theorem 2.3.3]{Erignoux:2016un} . Therein, the author relies solely on probabilistic techniques to  compute the hydrodynamic limit of the underlying particle system, thereby establishing the existence of a very weak solution; defined only as a measure. The present contribution provides an entirely deterministic PDE-theoretic proof, and we show the existence of a Sobolev weak solution with stronger regularity properties by means of the boundedness-by-entropy principle and a vanishing diffusivity method. Our strategy relies on using the linear diffusion in angle to upgrade the integrability of $f$ and obtain improved compactness properties; for this we adapt a classical interpolation lemma (\textit{cf.}~Appendix \ref{sec:appendix interpolation}) 
. We also prove uniqueness of our solution within the stronger class of admissible weak solutions when the P\'eclet number is assumed to be null, and provide existence and uniqueness results for stationary solutions.

At the core of the aforementioned boundedness-by-entropy method (originally developed in \cite{BoundEntropy}) is the notion that equation \eqref{eq: model 4} may be interpreted as a perturbation of a Wasserstein gradient flow associated to the entropy functional 
\begin{equation*}\label{eq:entropy functional}
    E[f] = \int_\Upsilon \big( f \log f + (1-\rho)\log(1-\rho) \big) \d \bxi; 
\end{equation*}
see \S \ref{sec:calc var} for further details. One is able to obtain additional \emph{a priori} estimates on the equation by considering the natural dual problem from the point of view of the underlying variational evolution. That is to say, we consider the first variation $u = E'[f]$ and rewrite \eqref{eq: model 4} as an equation on $u$, which reads as a perturbation of a gradient flow associated to the convex conjugate functional $E^*[u]$. By rewriting the original unknown $f$ in terms of $u$, one is able to immediately determine the non-negativity of $f$ and the \emph{a priori} bounds $0 \leq \rho \leq 1$. This generalizes the boundedness-by-entropy method developed for the related cross-diffusion system \eqref{eq:martin marco cross diff} in \cite{BoundEntropy} and later generalized in \cite{jungel2015boundedness}. The underlying philosophy is analogous to the reformulation of Lagrangian to Hamiltonian mechanics; now written in the context of Wasserstein gradient flows with nontrivial mobility. 

The rest of the paper is organised as follows. In \S \ref{sec:main results} we state our main results precisely, and outline our strategy in \S \ref{sec:setup}. \S \ref{sec:calc var} introduces the boundedness-by-entropy method; which we use to construct a sequence of suitable transformed regularised problems related to \eqref{eq: model 4}. In \S \ref{sec:galerkin}, we prove the existence of the Galerkin approximation to the aforementioned transformed regularised problems and obtain uniform estimates. In \S \ref{sec:proof of existence result}, we give the proof of our main existence result. In \S \ref{sec:uniqueness zero pe}, we prove uniqueness of the solution obtained in \S \ref{sec:proof of existence result} under the assumption that the P\'eclet number is null. Finally, in \S \ref{sec:stat states}, we prove the existence and uniqueness of stationary states under particular assumptions on the P\'eclet number. Appendix \ref{sec:appendix interpolation} contains an interpolation lemma pertaining to the uniform estimates of \S \ref{sec:unif est}, while Appendix \ref{sec:appendix initial data} contains technical results related to the sequence of regularised initial data used for the existence proof. 

\vspace{0.2cm}

\textbf{Notation: } Throughout this work, the letter $C$ will denote a positive constant independent of the parameters $\varepsilon,n$, which may change from line to line. Unless stated explicitly otherwise, the symbols $\dv,\nabla,\Delta$ denote divergences, gradients, and Laplacians taken with respect to the space variable $x$, while the symbols $\nabla_{\bxi},\Delta_{\bxi}$ denote gradients and Laplacians with respect to the concatenated space-angle variable $\bxi=(x,\theta)$. The domain on which the equation is posed will be denoted by $\Upsilon_T := (0,T)\times\Upsilon$, the closure of which we denote by $\overline{\Upsilon}_T := [0,T]\times\overline{\Upsilon}$; the sets $\Omega_T$ and $\overline{\Omega}_T$ are defined analogously. In later estimates and weak formulations, we shall make use of the families of admissible test functions $\mathcal{A},\mathcal{A}_s$, defined by 
\begin{equation}\label{eq:admissible test functions}
   \begin{aligned}
       &\mathcal{A} := \big\{ \psi \in C^\infty([0,T]\times\mathbb{R}^3) :  \, \psi(t,\cdot) \text{ triply } 2\pi \text{-periodic in }x,\theta, \, \psi(0,\cdot)=\psi(T,\cdot)=0 \big\}, \\ 
       &\mathcal{A}_s := \big\{ \psi \in C^\infty([0,T]\times\mathbb{R}^2) :  \, \psi(t,\cdot) \text{ doubly } 2\pi\text{-periodic in }x, \, \psi(0,\cdot)=\psi(T,\cdot)=0 \big\}, 
   \end{aligned} 
\end{equation}
and test functions will always be denoted by the letter $\psi$. We define the function spaces 
\begin{equation*}
    \begin{aligned}
    &X := L^6(0,T;W^{1,6}_\per(\Upsilon)), \quad Y := L^2(0,T;H^1_\per(\Omega)), \quad Z:= W^{1,6}_\per(\Upsilon), \\ 
    &\begin{aligned}\mathcal{X} := \Big\{ 
        f \in & L^3(0,T;L^3_\per(\Upsilon)): f \geq 0 \text{ a.e.}, \partial_\theta \sqrt{f} \in L^2(\Upsilon_T), \partial_t f \in X',  \\ 
        &\rho = {\int_0^{2\pi}} f \d \theta \in [0,1] \text{ a.e.}, \partial_t \rho \in Y',
        \nabla\sqrt{1-\rho} \in L^2(\Omega_T) , \sqrt{1-\rho}\nabla \sqrt{f} \in L^{2}(\Upsilon_T)\Big\}, 
    \end{aligned}
    \end{aligned}
\end{equation*}
and, in what follows, the notation $\langle \cdot , \cdot \rangle$ will always refer to the obvious underlying duality product where there is no confusion, unless stated explicitly otherwise. In the previous definitions and in what follows, the functions spaces denoted by $Z_\per(S)$ for $S \in \{(0,2\pi)^d\}_{d=1}^3$ are understood to mean 
\begin{equation*}
    Z_\per(S) := \big\{ g:\mathbb{R}^d \to \mathbb{R}: \, \Vert g \Vert_{Z(S)} < \infty, \text{ and } g(y+2\pi \e_i) = g(y) \, \forall y \in \mathbb{R}^d, i \in \{1,\dots,d\}  \big\}, 
\end{equation*}
where $\{\e_i\}_{i=1}^d$ is the standard basis of $\mathbb{R}^d$. The case $d=2$ corresponds to functions depending on $x$ and $d=3$ to functions depending on $x,\theta$. 

\subsection{Main theorems}\label{sec:main results}

In order to state our main existence result, we begin by introducing our notion of solution.

\begin{definition}[Weak solution]\label{def:weak sol}
   We say that $f \in \mathcal{X}$ is a \emph{weak solution} of \eqref{eq: model 4} with periodic boundary conditions if, for all $\psi \in X$, there holds 
  \begin{equation}\label{eq:weak sol}
    \begin{aligned}
      -\langle \partial_t f, \psi \rangle + \pe \int_{\Upsilon_T} (1-\rho)& f \e(\theta) \cdot \nabla \psi \d \bxi \d t \\ 
      &=\int_{\Upsilon_T} \Big( D_e\big((1-\rho)\nabla f + f \nabla \rho \big) \cdot \nabla \psi + \partial_\theta f\partial_\theta \psi \Big) \d \bxi \d t. 
    \end{aligned}
\end{equation}
\end{definition}

    We note that all integrals in the above weak formulation are well-defined since, using the Jensen and H\"older inequalities, 
    \begin{equation}\label{eq:weak form is well defined in intro}
        \begin{aligned}
            &\partial_\theta f \partial_\theta \psi = 2 \underbrace{\sqrt{f}}_{\in L^6} \underbrace{\partial_\theta \sqrt{f}}_{\in L^2} \underbrace{\partial_\theta \psi}_{\in L^6} \in L^{\frac{6}{5}}(\Upsilon_T) \subset L^1(\Upsilon_T) \\ 
            & f \nabla \rho \cdot \nabla \psi = -2 \underbrace{f}_{\in L^3}\underbrace{\sqrt{1-\rho}}_{\in L^\infty}\underbrace{\nabla\sqrt{1-\rho}}_{L^2} \cdot \underbrace{\nabla \psi}_{L^6} \in L^1(\Upsilon_T) \\ 
            & (1-\rho)\nabla f \cdot \nabla \psi = 2\underbrace{\sqrt{1-\rho}}_{\in L^\infty} \underbrace{\sqrt{f}}_{\in L^6} \underbrace{\sqrt{1-\rho}\nabla \sqrt{f}}_{\in L^2} \cdot \underbrace{\nabla \psi}_{\in L^6} \in L^{\frac{6}{5}}(\Upsilon_T) \subset L^1(\Upsilon_T). 
        \end{aligned}
    \end{equation}

Note that one may identify the duality product with 
\begin{equation*}
    \langle \partial_t f , \psi \rangle = \int_0^T \langle \partial_t f(t,\cdot), \psi(t,\cdot) \rangle_{Z'\times Z} \d t, 
\end{equation*}
whence a standard argument (\textit{cf.}~\cite{AGS08}) shows that the weak formulation of Definition \ref{def:weak sol} is equivalent to requiring that, for all $\psi \in \mathcal{A}$, there holds 
 \begin{equation}\label{eq:alt dt on test function}
    \begin{aligned}
     \int_{\Upsilon_T} f \partial_t \psi \d \bxi \d t + \pe \int_{\Upsilon_T} (1-\rho)& f \e(\theta) \cdot \nabla \psi \d \bxi \d t \\ 
      &=\int_{\Upsilon_T} \Big( D_e\big((1-\rho)\nabla f + f \nabla \rho \big) \cdot \nabla \psi + \partial_\theta f\partial_\theta \psi \Big) \d \bxi \d t. 
    \end{aligned}
\end{equation}

Our main existence result is the following. 
    
\begin{thm}[Existence of weak solution]\label{thm:main existence result}
     Let $T>0$ be arbitrary. Let $f_0$ be measurable non-negative initial data such that $f_0 \in L^p(\Upsilon)$ for some $p>1$ and $\rho_0 = \int_0^{2\pi} f_0 \d \theta \in [0,1]$ a.e.~in $\Omega$. Then, there exists $f \in \mathcal{X}$ a weak solution of \eqref{eq: model 4} in the sense of Definition \ref{def:weak sol}. Furthermore, the initial data is attained in the sense $\lim_{t \to 0^+}\Vert f(t,\cdot) - f_0 \Vert_{Z'} = 0$. 
\end{thm}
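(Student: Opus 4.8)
The plan is to follow the boundedness-by-entropy strategy outlined in the introduction, combined with a double regularisation (in the initial data and via an added "$\varepsilon$-diffusion") and a Galerkin scheme. First I would pass to the dual variable $u = E'[f]$, so that formally $f = f(u)$ is recovered from $u$ in a way that automatically enforces $f \geq 0$ and $0 \leq \rho \leq 1$; as explained in \S\ref{sec:calc var}, the equation for $u$ reads as a (non-local) perturbation of a gradient flow for the convex conjugate $E^*$. For fixed parameters $\varepsilon > 0$ and $n \in \mathbb{N}$ I would solve the regularised transformed problem by a Galerkin approximation in a suitable finite-dimensional subspace (using the smooth periodic basis functions implicit in $\mathcal{A}, \mathcal{A}_s$), invoking the results of \S\ref{sec:galerkin} for existence of the Galerkin solutions and for the uniform-in-$n$ estimates. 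Passing $n \to \infty$ at fixed $\varepsilon$ yields a solution $f_\varepsilon$ of the regularised problem, where the added diffusivity guarantees enough compactness for this first limit to be routine.

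The heart of the argument is the passage $\varepsilon \to 0$ (and simultaneously sending the regularised initial data $f_{0,\varepsilon} \to f_0$, using Appendix \ref{sec:appendix initial data}). Here I would collect the uniform estimates coming from the entropy structure: the entropy dissipation gives control of $\sqrt{1-\rho_\varepsilon}\nabla\sqrt{f_\varepsilon}$, $\partial_\theta\sqrt{f_\varepsilon}$, and $\nabla\sqrt{1-\rho_\varepsilon}$ in $L^2(\Upsilon_T)$ resp.\ $L^2(\Omega_T)$, together with the pointwise bounds $f_\varepsilon \geq 0$, $0 \leq \rho_\varepsilon \leq 1$. The crucial extra ingredient, emphasised in the introduction, is the \emph{gain of integrability from the angular diffusion}: the $L^2$ control of $\partial_\theta\sqrt{f_\varepsilon}$ upgrades the a priori $L^\infty_t L^1$ bound on $f_\varepsilon$ to an $L^3(\Upsilon_T)$ bound, via the interpolation lemma of Appendix \ref{sec:appendix interpolation} (a time-dependent Gagliardo--Nirenberg-type inequality). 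This $L^3$ bound is what makes every term of the weak formulation \eqref{eq:weak sol} equi-integrable, exactly as displayed in \eqref{eq:weak form is well defined in intro}. With these bounds and the equation itself I would get $\partial_t f_\varepsilon$ bounded in $X'$ and $\partial_t\rho_\varepsilon$ bounded in $Y'$, then apply an Aubin--Lions--Simon argument to extract strong $L^2$ (hence, by interpolation with the $L^3$ bound, strong $L^q$ for $q<3$) convergence of $f_\varepsilon$ and, separately, strong convergence of $\rho_\varepsilon$ (which enjoys the better, non-degenerate parabolic regularity of its own drift-diffusion equation).

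Having secured strong convergence of $f_\varepsilon \to f$ and $\rho_\varepsilon \to \rho$, plus weak convergence of the gradient-type quantities, I would identify the limits of the nonlinear terms: the products $\sqrt{1-\rho_\varepsilon}\,\sqrt{f_\varepsilon}$ converge strongly in $L^6$, $\sqrt{1-\rho_\varepsilon}\nabla\sqrt{f_\varepsilon}$ converges weakly in $L^2$, and likewise for the drift term $(1-\rho_\varepsilon)f_\varepsilon\e(\theta)$; combining strong $\times$ weak convergences passes \eqref{eq:weak sol} to the limit, and the $\varepsilon$-regularising term vanishes because its coefficient does. Finally, membership $f \in \mathcal{X}$ follows from weak lower semicontinuity of norms applied to the uniform bounds, and the attainment of the initial datum in $Z'$ follows from the $X' \hookrightarrow C([0,T];Z')$-type continuity of $t \mapsto f(t,\cdot)$ together with $f_{0,\varepsilon} \to f_0$ in $Z'$.

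The main obstacle I expect is the $\varepsilon \to 0$ limit in the degenerate diffusion term $(1-\rho_\varepsilon)\nabla f_\varepsilon = 2\sqrt{1-\rho_\varepsilon}\sqrt{f_\varepsilon}\cdot\sqrt{1-\rho_\varepsilon}\nabla\sqrt{f_\varepsilon}$: one only controls the degenerate quantity $\sqrt{1-\rho_\varepsilon}\nabla\sqrt{f_\varepsilon}$, not $\nabla f_\varepsilon$ or $\nabla\sqrt{f_\varepsilon}$ separately, so identifying its weak limit with the correct object requires care — precisely the point where the strong convergence of $f_\varepsilon$ (powered by the angular-diffusion integrability gain and the interpolation lemma) is indispensable. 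A secondary delicate point is handling the two distinct zones of degeneracy $\{\rho = 1\}$ and $\{f = 0\}$ simultaneously, and in particular ensuring that the bound on $\nabla\sqrt{1-\rho_\varepsilon}$ and the bound on $\sqrt{1-\rho_\varepsilon}\nabla\sqrt{f_\varepsilon}$ are both preserved in the limit so that $f \in \mathcal{X}$.
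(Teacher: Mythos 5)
Your high-level plan matches the paper's architecture (entropy variable, $\varepsilon$-regularisation, Galerkin, interpolation to $L^3$, Aubin--Lions), but it contains a genuine gap precisely at the step you describe as ``routine.'' You propose to pass $n\to\infty$ at fixed $\varepsilon$ first, and only then send $\varepsilon\to0$. This sequential limit does not go through: the time-derivative estimate produced by the scheme controls only $\partial_t(\varepsilon U^\varepsilon_n+\rho^\varepsilon_n)$ in $Y'$ (and analogously $\partial_t(\varepsilon u^\varepsilon_n+f^\varepsilon_n)$ in $X'$), not $\partial_t \rho^\varepsilon_n$ alone. Thus Aubin--Lions at fixed $\varepsilon$ yields strong convergence only of the \emph{sum} $\varepsilon U^\varepsilon_n+\rho^\varepsilon_n$ as $n\to\infty$; to split this into strong convergence of $\rho^\varepsilon_n$ one would need strong convergence of $U^\varepsilon_n$ at fixed $\varepsilon$, which is blocked by the non-local operator $A$ and the non-local mobility $\tilde M$ (the heat-type term $\varepsilon(\partial_t-\Delta_{\bxi})u^\varepsilon_n$ does not yield quantitative $H^2$--$L^2$ regularity estimates in their presence). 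The paper therefore takes a \emph{simultaneous} (diagonal) limit in $(n,\varepsilon)$, and exploits the fact that $\sqrt\varepsilon U^\varepsilon_n$ is uniformly $L^2$-bounded so that $\varepsilon U^\varepsilon_n\to 0$ strongly as $\varepsilon\to0$, which is how strong convergence of $\rho^\varepsilon_n$ is extracted from the strong convergence of the sum. The diagonal argument in turn needs some care with the regularised initial data (Lemma on diagonal convergence of initial data), which your plan also omits.

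A second, related overclaim: you assert strong $L^2$ convergence of $f_\varepsilon$ itself via Aubin--Lions. But there is no usable control on $\nabla f_\varepsilon$ or $\nabla\sqrt{f_\varepsilon}$ alone (only on the degenerate quantity $\sqrt{1-\rho_\varepsilon}\nabla\sqrt{f_\varepsilon}$), so no compactness in the spatial variable is available for $f_\varepsilon$; the paper extracts only \emph{weak} $L^3$ convergence of $f^\varepsilon$. Consequently, your proposed ``strong $\times$ weak'' identification of $(1-\rho_\varepsilon)\nabla f_\varepsilon$ and $f_\varepsilon\nabla\rho_\varepsilon$ cannot rely on strong convergence of $f_\varepsilon$. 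The paper instead rewrites the diffusive term as $(1-\rho^\varepsilon)\nabla f^\varepsilon + f^\varepsilon\nabla\rho^\varepsilon = \sqrt{1-\rho^\varepsilon}\,\nabla(f^\varepsilon\sqrt{1-\rho^\varepsilon}) + \tfrac32 f^\varepsilon\nabla\rho^\varepsilon$, handles the first piece by strong convergence of $\sqrt{1-\rho^\varepsilon}$ against weak convergence of $\nabla(f^\varepsilon\sqrt{1-\rho^\varepsilon})$, and handles the second piece by an integration by parts that places two derivatives on the test function, so that the limit is taken using only strong convergence of $\rho^\varepsilon$ (not $f^\varepsilon$). This reorganisation is the step you should supply in place of the assumed strong convergence of $f_\varepsilon$.
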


The essential requirement on $f_0$ in the above theorem is the finiteness of the entropy functional $E[f_0]$. This is of course satisfied by imposing the stronger condition $f_0 \in L^p(\Upsilon)$ for some $p>1$. For clarity of exposition, we choose this latter requirement in order to frame the problem in Lebesgue spaces instead of Orlicz spaces. We note in passing that the weak solution obtained in Theorem \ref{thm:main existence result} also satisfies an alternative notion of weak solution which is more convenient when studying regularity (\textit{cf.}~\cite{regularity}); this alternative weak formulation is written at the end of \S \ref{sec:proof of existence result}. 

\begin{rem}[Attainment of initial data]\label{rem:ibp for duality bracket}
Note that $\mathcal{X} \subset W^{1,\frac{6}{5}}(0,T;Z')$, whence (see, \textit{e.g.}, \cite[Theorem 2, \S 5.9.2]{evans}) $f \in C([0,T];Z')$ and $\Vert f(t,\cdot) - f(s,\cdot) \Vert_{Z'} \leq |t-s|^{1/6}C( \Vert \partial_t f \Vert_{Y'})$ for a.e.~$0 < s < t < T$. Hence, the attainment of the initial data is phrased in the sense of $Z'$. 
\end{rem}

Our main uniqueness result is the following, and focuses on the case of zero P\'eclet number. 

\begin{thm}[Uniqueness for null P\'eclet number]\label{thm:uniqueness}
    If $\pe=0$, then the weak solution provided by Theorem \ref{thm:main existence result} is unique in $\mathcal{X}$. 
\end{thm}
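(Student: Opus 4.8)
The plan is to prove uniqueness by an $L^2$-type energy estimate on the difference of two solutions, carried out at the level of the dual variable. Suppose $f_1, f_2 \in \mathcal{X}$ are two weak solutions with the same initial data $f_0$, and write $\rho_i = \int_0^{2\pi} f_i \,\d\theta$. When $\pe = 0$, the equation \eqref{eq: model 4} reduces to the pure cross-diffusion system
\begin{equation*}
    \partial_t f = D_e \dv\big((1-\rho)\nabla f + f\nabla\rho\big) + \partial_\theta^2 f,
\end{equation*}
which is formally the (nontrivial-mobility) Wasserstein gradient flow of the entropy $E[f] = \int_\Upsilon \big(f\log f + (1-\rho)\log(1-\rho)\big)\,\d\bxi$. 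The natural quantity to control is therefore the Bregman-type distance, or more simply the mixed $L^2$ quantity obtained by testing the difference equation with $f_1 - f_2$ in the space variable and $\rho_1 - \rho_2$ appropriately. Concretely, I would test the equation for $f_1 - f_2$ with the admissible test function $\psi = f_1 - f_2$ (after a suitable time-truncation/mollification to justify using $\partial_t(f_1-f_2)$ against it, via the $W^{1,6/5}(0,T;Z')$ regularity from Remark \ref{rem:ibp for duality bracket} and a density argument), producing
\begin{equation*}
    \frac12 \frac{d}{dt}\|f_1 - f_2\|_{L^2(\Upsilon)}^2 + \int_\Upsilon \big(\partial_\theta(f_1-f_2)\big)^2 \,\d\bxi = -D_e\int_\Upsilon \big((1-\rho_1)\nabla f_1 + f_1\nabla\rho_1 - (1-\rho_2)\nabla f_2 - f_2\nabla\rho_2\big)\cdot\nabla(f_1-f_2)\,\d\bxi.
\end{equation*}

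The key algebraic step is to rewrite the flux difference in a way that exposes a good sign. Writing the flux as $(1-\rho)\nabla f + f\nabla\rho = \nabla f - \rho\nabla f + f\nabla\rho = \nabla f - \nabla(\rho f) + 2f\nabla\rho$ — or, more usefully, splitting off the "linear" part $\nabla f$ which gives a nonnegative $\|\nabla(f_1-f_2)\|_{L^2}^2$ contribution — one is left with cross terms of the form $\rho_i\nabla f_i - \rho_j\nabla f_j$ and $f_i\nabla\rho_i - f_j\nabla\rho_j$ that must be absorbed. These are handled by adding and subtracting, Hölder, and crucially the improved regularity of $\rho$: since the equation for $\rho$ is uniformly parabolic (non-degenerate), one has $\rho_i \in Y = L^2(0,T;H^1_\per(\Omega))$ with $\nabla\sqrt{1-\rho_i} \in L^2(\Omega_T)$, and the gain-of-integrability estimates built into the definition of $\mathcal{X}$ (notably $f_i \in L^3(\Upsilon_T)$ and $\sqrt{1-\rho_i}\nabla\sqrt{f_i}\in L^2$) give enough room to estimate the bad terms by $\eta\|\nabla(f_1-f_2)\|_{L^2}^2 + \eta\|\partial_\theta(f_1-f_2)\|_{L^2}^2 + C(t)\|f_1-f_2\|_{L^2}^2$ with $\eta$ small and $C \in L^1(0,T)$. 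Since $\|\rho_1-\rho_2\|_{L^2(\Omega)} \le C\|f_1-f_2\|_{L^2(\Upsilon)}$ by Jensen, closing the estimate does not require separately controlling $\rho_1-\rho_2$. A Grönwall argument on $\|f_1(t)-f_2(t)\|_{L^2(\Upsilon)}^2$ with zero initial value then forces $f_1 = f_2$.

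The main obstacle I anticipate is twofold. First, the rigorous justification of the energy identity: $f_1 - f_2$ is only an admissible test function in $X = L^6(0,T;W^{1,6}_\per(\Upsilon))$ if $f_i$ themselves lie in $X$, which is \emph{not} part of the definition of $\mathcal{X}$ — members of $\mathcal{X}$ only control $\nabla\sqrt{f}$ weighted by $\sqrt{1-\rho}$, not $\nabla f$ in $L^6$. So one genuinely needs either a regularization/approximation of the test function, or to work with a renormalized formulation: test instead with a bounded function of $f_1-f_2$, or with $\Phi'(f_1) - \Phi'(f_2)$ for a carefully chosen convex $\Phi$, exploiting that $\sqrt{1-\rho_i}\,\nabla f_i = 2\sqrt{f_i}\,(\sqrt{1-\rho_i}\nabla\sqrt{f_i}) \in L^{3/2}(\Upsilon_T)$ is the genuinely available flux integrability. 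This will likely force the estimate to be carried out in a weighted or $H^{-1}$-type norm rather than plain $L^2$, mirroring the duality/gradient-flow structure emphasized in \S\ref{sec:calc var} — indeed the cleanest route is probably to test the difference of the weak formulations against the difference of the dual variables $u_i = E'[f_i]$ and use geodesic convexity / monotonicity of $E'$, which automatically produces the Bregman distance as a Lyapunov functional. Second, the degeneracy at $\{\rho = 1\}$ and $\{f = 0\}$ means the flux $(1-\rho)\nabla f$ carries no coercivity there, so the $\partial_\theta^2 f$ term and the non-degeneracy of the $\rho$-equation must do all the work in those regions; verifying that the available compactness/integrability is exactly enough — and not less — is the delicate quantitative point. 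I expect the argument to go through precisely because $\pe = 0$ removes the non-local drift, which is the only term not compatible with the gradient-flow/monotonicity structure.
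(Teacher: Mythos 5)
There is a genuine gap, and it starts with a missed structural observation. When $\pe=0$, integrating \eqref{eq: model 4} in $\theta$ shows that $\rho$ satisfies the \emph{decoupled, non-degenerate} heat equation $\partial_t \rho = D_e\Delta\rho$, so for two solutions with the same initial data one gets $\rho_1=\rho_2=:\rho$ at the outset by standard heat-equation uniqueness. This is the first and crucial step of the paper's proof: it removes all differences $\rho_1-\rho_2$ from the coefficients, leaving the two functions $f_1,f_2$ solving the \emph{same} degenerate equation $\partial_t f_i = D_e\dv((1-\rho)\nabla f_i + f_i\nabla\rho)+\partial^2_\theta f_i$. Your plan never uses this; instead you propose to control $\rho_1-\rho_2$ via Jensen inside a Gr\"onwall loop, which forces you to handle differences of the degenerate coefficients $(1-\rho_i)$ and of $\nabla\rho_i$ — exactly the terms that cannot be absorbed with the regularity available in $\mathcal{X}$.

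Even granting the reduction $\rho_1=\rho_2$, the $L^2$ energy estimate you sketch does not close. Testing with $g=f_1-f_2$ (which, as you note, is not an admissible test function, and you do not supply the renormalization that would fix this), the coercive spatial term carries the degenerate weight, $D_e\int_\Upsilon(1-\rho)|\nabla g|^2$, while the remaining term $-D_e\int_\Upsilon g\,\nabla\rho\cdot\nabla g$ has no such weight: absorbing it requires either $\nabla\rho\in L^\infty$, an unweighted $\|\nabla g\|_{L^2}^2$ on the left, or a factor $(1-\rho)^{-1}$ that blows up on the degeneracy set — none of which is available (only $f_i\in L^3(\Upsilon_T)$, $\nabla\rho\in L^2(\Omega_T)$, $\sqrt{1-\rho}\nabla\sqrt{f_i}\in L^2$). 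So the claimed bound by $\eta\|\nabla g\|_{L^2}^2+C(t)\|g\|_{L^2}^2$ with $C\in L^1(0,T)$ is not substantiated, and this is precisely why the paper does not run an $L^2$ estimate: after the heat-equation step it applies Gajewski's method, differentiating the regularized entropy distance $d_\delta(f_1,f_2)=\int_\Upsilon(\zeta_\delta(f_1)+\zeta_\delta(f_2)-2\zeta_\delta(\tfrac{f_1+f_2}{2}))\,\d\bxi$ with $\zeta(s)=s(\log s-1)+1$; the good terms are nonnegative by subadditivity of the Fisher information, the remainder involves $\nabla f_i\cdot\nabla\rho$, which is shown to lie in $L^1(\Upsilon_T)$ from the $\mathcal{X}$-bounds, so it vanishes as $\delta\to0$ by dominated convergence, and the quadratic lower bound $\zeta_\delta(u)+\zeta_\delta(v)-2\zeta_\delta(\tfrac{u+v}{2})\geq\tfrac18|u-v|^2$ yields $f_1=f_2$. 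Your closing remarks about testing with the dual variables $u_i=E'[f_i]$ and using a Bregman distance gesture toward this entropy structure, but they are not carried out, and without the $\rho_1=\rho_2$ reduction they would still face the problem of differing coefficients; as written, the proposal does not constitute a proof.
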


Finally, we study the existence and uniqueness of stationary states of \eqref{eq: model 4} in \S \ref{sec:stat states}. We mostly concentrate on uniqueness, which we divide into two parts: uniqueness for zero P\'eclet number, and uniqueness of strong solutions near \emph{constant} stationary states for general P\'eclet number. We delay the statements of these results to \S \ref{sec:stat states}.

\subsection{Strategy and set-up}\label{sec:setup}

In this section, we describe our strategy of proof for the main existence result Theorem \ref{thm:main existence result} in \S \ref{sec:strategy}, and then describe our mollification procedure for regularising the initial data in \S \ref{sec:setup initial}. 

\subsubsection{Strategy}\label{sec:strategy}

We briefly outline our strategy of proof. As already mentioned, the equation \eqref{eq: model 4} may be interpreted as a perturbation of a Wasserstein gradient flow associated to the entropy functional $E$. It is therefore natural to reframe the problem in terms of the first variation of $E[f]$, which we denote by $u = E'[f]$; see \S \ref{sec:calc var} for more details. The resulting equation for $u$ is again non-linear, non-local, and degenerate-parabolic. We therefore add the regularisation term $\varepsilon \Delta_{\bxi} u$ to this equation, and compute Galerkin approximations of the solution of this equation, which we denote by $u^\varepsilon_n$; where $n$ denotes the $n$-th Galerkin approximation. These approximations give rise to approximations $f^\varepsilon_n$ for the solution of \eqref{eq: model 4} by a simple explicit formula. After proving the required uniform estimates for the families of functions $\{u^\varepsilon_n,f^\varepsilon_n,\rho^\varepsilon_n\}_{n,\varepsilon}$, which are independent of $n$ and $\varepsilon$, we perform a diagonal procedure so as to simultaneously take the limit $n\to\infty$ and $\varepsilon\to 0$, and we obtain a weak solution of \eqref{eq: model 4} in the sense of Definition \ref{def:weak sol}.

\subsubsection{Set-up for initial data}\label{sec:setup initial}

We shall approximate the initial data with smooth strictly positive functions. We consider the product mollifier sequence 
\begin{equation*}
    \eta_\varepsilon(x,\theta) := \alpha_\varepsilon(x)\beta_\varepsilon(\theta), 
\end{equation*}
where $\alpha,\beta$ are compactly supported smooth non-negative bump functions with unit integral; $\alpha_\varepsilon(x) := \varepsilon^{-2} \alpha(x/\varepsilon)$, $\beta_\varepsilon(\theta) = \varepsilon^{-\gamma} \beta(\theta/\varepsilon^\gamma)$ for $\gamma>2$. Define, for $\varepsilon \in (0,1)$, the strictly positive periodic functions 
\begin{equation*}
   \begin{aligned}
       &f_0^\varepsilon := \frac{\varepsilon}{4\pi} + (1-\varepsilon)\eta_\varepsilon*f_0 \in C^\infty_\per(\overline{\Upsilon}), \\ 
       &\rho_0^\varepsilon := \int_0^{2\pi} f_0^\varepsilon(\cdot,\theta) \d \theta = \frac{\varepsilon}{2} + (1-\varepsilon)\alpha_\varepsilon*\rho_0 \in C^\infty_\per(\overline{\Omega}), 
   \end{aligned} 
\end{equation*}
where the convolutions are understood in the sense $\eta_\varepsilon*f_0 = \int_{\mathbb{R}^3} \eta_\varepsilon(\cdot-\bxi)  f_0(\bxi) \d \bxi$ and $\alpha_\varepsilon*\rho_0 = \int_{\mathbb{R}^2} \alpha_\varepsilon(\cdot-x)\rho_0(x) \d x$. A straightforward computation (\textit{cf.}~Appendix \ref{sec:appendix initial data}) shows that $0<f^\varepsilon_0$ and $0<\rho^\varepsilon_0 < 1$, whence the function 
\begin{equation*}
    \begin{aligned}
  u_0^\varepsilon := \log\Big( \frac{f^\varepsilon_0}{1-\rho^\varepsilon_0} \Big) \in C^\infty_\per(\overline{\Upsilon}) 
    \end{aligned}
\end{equation*}
is also well-defined. The next lemma summarises the key properties of the regularised initial data. The proof is delayed to Appendix \ref{sec:appendix initial data}. 

\begin{lemma}[Properties of the regularised initial data]\label{lem:regularised initial data}
    There exists a subsequence of the regularised initial data sequence $\{f^\varepsilon_0,\rho^\varepsilon_0,u^\varepsilon_0\}_\varepsilon$, which we do not relabel, such that 
    \begin{equation*}
    f^\varepsilon_0 \to f_0 \text{ strongly in } L^p(\Upsilon), \quad \rho^\varepsilon_0 \to \rho_0 \text{ strongly in } L^q(\Omega) \text{ for all } q \in [1,\infty), 
    \end{equation*}
    and for which there exists a positive constant $C=C(p,\Upsilon)$, independent of $\varepsilon$, such that 
    \begin{equation*}
        \begin{aligned}
            E[f^\varepsilon_0] + \varepsilon \int_\Upsilon |u^\varepsilon_0|^2 \d \bxi \leq C\big(1+\Vert f_0 \Vert_{L^p(\Upsilon)} \big). 
        \end{aligned}
    \end{equation*}
\end{lemma}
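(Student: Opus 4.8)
The statement has three components: strong convergence of $f_0^\varepsilon \to f_0$ in $L^p(\Upsilon)$, strong convergence of $\rho_0^\varepsilon \to \rho_0$ in every $L^q(\Omega)$, $q\in[1,\infty)$, and the uniform bound on $E[f_0^\varepsilon] + \varepsilon\int_\Upsilon |u_0^\varepsilon|^2\,\d\bxi$. The convergence statements are the "soft" part; the entropy bound is where one must be careful.

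**Step 1: Strong $L^p$/$L^q$ convergence.** I would start from the decomposition $f_0^\varepsilon = \frac{\varepsilon}{4\pi} + (1-\varepsilon)\eta_\varepsilon * f_0$. The constant term $\varepsilon/(4\pi) \to 0$ in $L^p(\Upsilon)$ (the domain is bounded). For the mollified term, $\eta_\varepsilon * f_0 \to f_0$ strongly in $L^p(\Upsilon)$ by the standard approximate-identity argument, and then $(1-\varepsilon)\eta_\varepsilon*f_0 \to f_0$; here one has to be slightly attentive because $\eta_\varepsilon$ mollifies at different rates in $x$ and $\theta$ (scaling $\varepsilon$ in $x$ and $\varepsilon^\gamma$ in $\theta$), but since both bandwidths tend to $0$ the product-mollifier convergence theorem still applies. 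Periodizing $f_0$ (extend $2\pi$-periodically to $\mathbb{R}^3$) makes the convolution well-defined and the periodicity of $f_0^\varepsilon$ manifest. For $\rho_0^\varepsilon$, integrate the formula for $f_0^\varepsilon$ in $\theta$ to get $\rho_0^\varepsilon = \frac{\varepsilon}{2} + (1-\varepsilon)\alpha_\varepsilon * \rho_0$, and the same argument gives $\rho_0^\varepsilon \to \rho_0$ in $L^p(\Omega)$; since $0 \le \rho_0^\varepsilon \le 1$ uniformly (by the pointwise bounds asserted just above the lemma), dominated convergence upgrades this to convergence in every $L^q(\Omega)$, $q<\infty$. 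Along the way I would extract the subsequence guaranteeing a.e.\ convergence, which is needed for Step 2.

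**Step 2: The entropy bound — the main obstacle.** The functional $E[f] = \int_\Upsilon\big(f\log f + (1-\rho)\log(1-\rho)\big)\,\d\bxi$ must be controlled uniformly in $\varepsilon$. The term $(1-\rho_0^\varepsilon)\log(1-\rho_0^\varepsilon)$ is bounded pointwise by $1/e$ in absolute value since $1-\rho_0^\varepsilon \in (0,1)$, so it contributes only a harmless $|\Upsilon|/e$. The delicate term is $\int_\Upsilon f_0^\varepsilon \log f_0^\varepsilon$. I would use convexity of $s \mapsto s\log s$: writing $f_0^\varepsilon$ as the convex combination $\frac{\varepsilon}{4\pi} + (1-\varepsilon)\,\eta_\varepsilon*f_0$, but more usefully noting $\eta_\varepsilon * f_0$ is itself an average of translates of $f_0$, Jensen's inequality gives $\int_\Upsilon (\eta_\varepsilon*f_0)\log(\eta_\varepsilon*f_0)\,\d\bxi \le \int_\Upsilon \eta_\varepsilon * (f_0\log f_0)\,\d\bxi = \int_\Upsilon f_0\log f_0\,\d\bxi$ (using that $\eta_\varepsilon$ has unit mass and periodicity makes the convolution mass-preserving on $\Upsilon$). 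A further convexity step handles the affine combination with the constant $\varepsilon/(4\pi)$. Finally $\int_\Upsilon f_0 \log f_0 \le \int_\Upsilon f_0^p + C \le \|f_0\|_{L^p(\Upsilon)}^p + C$ using $s\log s \le s^p/(p-1) + C_p$ for $s\ge 0$ — this is exactly where the hypothesis $f_0 \in L^p$, $p>1$, is used, and one should take a little care to bound $\|f_0\|_{L^p}^p$ by $C(1+\|f_0\|_{L^p})$ (trivial if $\|f_0\|_{L^p}\le 1$; otherwise $\|f_0\|_{L^p}^p \le \|f_0\|_{L^p}^{\lceil p\rceil}$... actually cleaner: the lemma only claims a bound by $C(1+\|f_0\|_{L^p})$, so I would just absorb powers by Young's inequality or accept a constant depending on $p$ — this is cosmetic).

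**Step 3: The $\varepsilon\|u_0^\varepsilon\|_{L^2}^2$ term.** Here $u_0^\varepsilon = \log(f_0^\varepsilon/(1-\rho_0^\varepsilon)) = \log f_0^\varepsilon - \log(1-\rho_0^\varepsilon)$. The second piece: $1-\rho_0^\varepsilon \in (\delta(\varepsilon), 1)$ with the lower bound $1-\rho_0^\varepsilon \ge \varepsilon/2 \cdot(1 - \text{stuff})$... more precisely from $\rho_0^\varepsilon = \frac{\varepsilon}{2} + (1-\varepsilon)\alpha_\varepsilon*\rho_0 \le \frac{\varepsilon}{2} + (1-\varepsilon) = 1 - \frac{\varepsilon}{2}$, so $1-\rho_0^\varepsilon \ge \varepsilon/2$, giving $|\log(1-\rho_0^\varepsilon)| \le C|\log\varepsilon|$, hence $\varepsilon\int_\Upsilon |\log(1-\rho_0^\varepsilon)|^2 \le C\varepsilon|\log\varepsilon|^2 \to 0$. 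For $\log f_0^\varepsilon$: the lower bound $f_0^\varepsilon \ge \varepsilon/(4\pi)$ controls the negative part, $(\log f_0^\varepsilon)_- \le C|\log\varepsilon|$, so $\varepsilon\int (\log f_0^\varepsilon)_-^2 \le C\varepsilon|\log\varepsilon|^2|\Upsilon| \to 0$. The positive part $(\log f_0^\varepsilon)_+$ needs more: one has $(\log s)_+^2 \le C_q s^q$ for any $q>0$ and $s$ large (and trivially bounded for $s$ bounded), so $\varepsilon\int_\Upsilon (\log f_0^\varepsilon)_+^2 \le C\varepsilon\int_\Upsilon (f_0^\varepsilon)^{q} + C\varepsilon|\Upsilon|$; choosing $q$ small enough and noting $\|f_0^\varepsilon\|_{L^q} \le \|f_0^\varepsilon\|_{L^p}$ is not uniformly bounded... so instead I would use $q$ with $q \le 1$, where $\|\eta_\varepsilon * f_0\|_{L^1(\Upsilon)} = \|f_0\|_{L^1(\Upsilon)} \le C\|f_0\|_{L^p(\Upsilon)}$ by Hölder on the bounded domain, giving $\varepsilon\int (f_0^\varepsilon)^q \le C(1+\|f_0\|_{L^p})$ for $q\le 1$ (concavity of $s\mapsto s^q$, Jensen). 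Putting these together yields the claimed bound. The clean way to write all of this is to prove a single elementary inequality $|s\log(s/(1-\sigma))|^2 \le C(1 + s + |\log s|^2 \mathbf{1}_{s\le 1} + |\log(1-\sigma)|^2)$ type estimate and then integrate, but I would keep it modular as above. The genuinely load-bearing hypothesis throughout is $f_0\in L^p$ with $p>1$: strictly $L^1$ data would not control the entropy, consistent with the remark after Theorem 1.3 that the real requirement is $E[f_0]<\infty$.
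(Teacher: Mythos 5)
Your proposal is correct, and it reaches the same conclusions as the paper, but the two key estimates are run differently. For the entropy term $\int_\Upsilon f^\varepsilon_0\log f^\varepsilon_0$, you use convexity twice (Jensen for the mollification, $\phi(\eta_\varepsilon*f_0)\le\eta_\varepsilon*\phi(f_0)$ with $\phi(s)=s\log s$, plus the convex combination with the constant $1/4\pi$) and then $s\log s\le C_p(1+s^p)$; the paper instead splits the domain into $\{f^\varepsilon_0<1\}$ and $\{f^\varepsilon_0\ge1\}$, uses local boundedness of $s\log s$ on the first set and $|\log s|\mathds{1}_{\{s\ge1\}}\le C_p|s|^{p-1}$ on the second, with the uniform $L^p$ control of $f^\varepsilon_0$ coming from Young's convolution inequality. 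These are interchangeable; your Jensen route avoids the domain split and makes the ``mollification does not increase entropy'' mechanism explicit. For the term $\varepsilon\int_\Upsilon|u^\varepsilon_0|^2\d\bxi$, you split $u^\varepsilon_0=\log f^\varepsilon_0-\log(1-\rho^\varepsilon_0)$ and absorb $|\log\varepsilon|^2$ factors (from the lower bounds $f^\varepsilon_0\ge\varepsilon/4\pi$, $1-\rho^\varepsilon_0\ge\varepsilon/2$) into the prefactor $\varepsilon$, handling the positive part of $\log f^\varepsilon_0$ via $(\log s)_+^2\le C_q s^q$ and the uniform $L^1$ bound on $f^\varepsilon_0$; the paper keeps the ratio $f^\varepsilon_0/(1-\rho^\varepsilon_0)$ intact, splits on whether it is below or above $1$, and uses $|\log s|\le Cs^{-1/2}$ resp.\ $|\log s|\le s^{1/2}$ so that the $\varepsilon^{-1}$ from the lower bounds exactly cancels the prefactor $\varepsilon$, with $\int_\Upsilon f^\varepsilon_0=\int_\Omega\rho^\varepsilon_0\le C$ closing the large-ratio case. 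Your version even shows the $\varepsilon\Vert u^\varepsilon_0\Vert_{L^2}^2$ contribution tends to zero up to the $L^1$ piece, which is stronger than needed; the paper's version is marginally tighter bookkeeping but no more general. Your observation that the stated right-hand side $C(1+\Vert f_0\Vert_{L^p})$ really comes out as $C_p(1+\Vert f_0\Vert_{L^p}^p)$ matches what the paper's own proof produces, so it is indeed only cosmetic. The convergence statements are handled the same way (standard approximate-identity argument, which the paper delegates to a reference), and your remark about the anisotropic scaling in $x$ and $\theta$ being harmless is accurate.
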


\section{Entropy Variable and Transformed Problem}\label{sec:calc var}

In the following we will perform a transformation of the problem \eqref{eq: model 4} into entropy variables, which diagonalizes the diffusion part at the cost of coupling the time derivatives of $f$ and $\rho$. This transformation allows for a suitable approximation via the Galerkin method, as it did for the cross-diffusion system \eqref{eq:martin marco cross diff} considered in \cite{BoundEntropy}, and an application of the boundedness-by-entropy principle; developed again in \cite{BoundEntropy} and subsequently generalised in \cite{jungel2015boundedness}.

For this sake we consider the aforementioned \emph{entropy functional} of \eqref{eq:entropy functional}, \textit{i.e.}, 
\begin{equation*}
    E[f] = \int_\Upsilon \big(  f \log f + (1-\rho) \log(1-\rho) \big) \d x \d \theta, 
\end{equation*}
where $\rho = \int_0^{2\pi} f \d \theta$. Note that  $E[f]$ is bounded from below by a fixed constant for all $f$, provided $0 \leq \rho \leq 1$ and $f \geq 0$ a.e. We can define $E$ on a dense subspace of $L^p(\Upsilon)$ for any $p > 1$, \textit{e.g.}, smooth functions, and extend it uniquely to the whole space. Note that $E$ is a convex functional, and we may define its convex conjugate, or equivalently its Legendre transform, $E^*: L^{p'}(\Upsilon) \to \mathbb{R}$
via  
$$ E^*[u] := \sup\big\{ \langle u, f \rangle - E[f]: f \in L^p(\Upsilon)\big\},$$
which is again a convex functional. In fact, we may compute $E^*$ explicitly as 
$$ E^*[u] = \int_\Omega \log \left( 1 + \int_0^{2\pi}  e^u \d\theta\right) \d x. $$ As usual we define the \emph{entropy variable} (or dual variable) related to $f$ via the functional derivative 
\begin{equation}\label{eq:entropy variable def}
u := E'[f] = \log f - \log (1-\rho). 
\end{equation}
We therefore write $f$ and $\rho$ as functions of $u:\Upsilon_T \to \mathbb{R}$ via the expressions: 
\begin{equation*}
    \begin{aligned}
       f = (1-\rho)e^{u}, \qquad        \rho = (1-\rho) \int_0^{2\pi} e^{u} \d \theta. 
    \end{aligned} 
\end{equation*}
It follows that 
\begin{equation}\label{eq:rho in terms of entropy variable}
    \rho(t,x) = \frac{\int_0^{2\pi} e^{u(t,x,\theta)} \d \theta}{1+\int_0^{2\pi} e^{u(t,x,\theta)} \d \theta} 
\end{equation}
automatically satisfies the bound $0 \leq \rho \leq 1$. In turn, we obtain 
\begin{equation}
    f(t,x,\theta) = \frac{e^{u(t,x,\theta)}}{1+\int_0^{2\pi} e^{u(t,x,\theta)} \d \theta}. 
\end{equation}
Note that this latter formula makes the relation $(E^*)' = (E')^{-1}$ explicit, \textit{i.e.},
$$ u = E'[f] , \quad f = (E^*)'[u].$$

The main idea to analyze the system is as follows: the original problem \eqref{eq: model 4} is considered as a perturbation of the gradient flow 
$$  \partial_t f =  \nabla_{\bxi} \cdot \left( M(f,\rho)  (\nabla_{\bxi}  E'[f] +  V) \right) $$
with mobility $M$ and perturbative vector field $V$, independent of $f$, given by 
$$M (f,\rho) = \left( \begin{array}{cc} D_e f (1-\rho) I  & 0 \\ 0 & f\end{array}\right), \qquad V(x,\theta) = \left(\begin{array}{c}  - \pe~\e(\theta) \\0 \end{array} \right).$$
Transforming to the entropy variable we arrive at 
\begin{equation}\label{eq:grad flow perturbation in terms of u}  
\partial_t ((E^*)'[u]) =  \nabla_{\bxi} \cdot \left( \tilde M(u)  (\nabla_{\bxi} u +  V) \right), 
\end{equation}
where $\tilde M(u) = M(f[u],\rho[u])$ is given by 
\begin{equation}\label{eq:tilde M def}
    \tilde{M}(u) = \left( \begin{matrix}
    \frac{e^u}{(1+\int_0^{2\pi} e^u \d\theta)^2} D_e I & 0 \\ 
      0 & \frac{e^u}{1+\int_0^{2\pi} e^u \d \theta}
    \end{matrix}\right). 
\end{equation}
This dual formulation has several advantages. First of all, the boundedness-by-entropy principle applies, \textit{i.e.}~if we find a weak solution $u$ of the above, it is apparent that after transformation $f \geq 0$ and $0 \leq \rho \leq 1$ hold almost everywhere.
Moreover, the transformed system can be approximated more easily, \textit{e.g.}~by regularising with a multiple of the heat operator. This yields a problem of the form
\begin{equation}\label{eq:eps eqn}
(\varepsilon \id + A(u^\varepsilon)) [\partial_t u^\varepsilon] =  \nabla_{\bxi} \cdot \left( (\varepsilon I + \tilde M(u^\varepsilon))  \nabla_{\bxi} u^\varepsilon +  \tilde M(u^\varepsilon) V \right), 
\end{equation}
where, for all $u \in L^{p'}(\Upsilon)$, the quantity $A(u)$ is a linear operator which corresponds to the second functional derivative of $E^*$ evaluated on the element $u$; furthermore, since $E^*$ is convex, this quantity is a positive semi-definite linear operator. In fact, one can explicitly write 
\begin{equation}\label{eq:explicit Hessian}
    \begin{aligned}
        A(u^\varepsilon)[\partial_t u^\varepsilon] = \frac{e^{u^\varepsilon}}{1+\int_0^{2\pi} e^{u^\varepsilon} \d \theta} \partial_t u^\varepsilon - \frac{e^{u^\varepsilon}}{\big( 1+\int_0^{2\pi} e^{u^\varepsilon} \d \theta \big)^2}\int_0^{2\pi} e^{u^{\varepsilon}} \partial_t u^\varepsilon \d \theta. 
    \end{aligned}
\end{equation}

In principle, one can now write a proof of existence and uniqueness including some regularity for the regularised problem \eqref{eq:explicit Hessian}; this is the content of \S \ref{sec:galerkin}. Moreover, testing the equation with $u$ preserves \emph{a priori} estimates for the entropy, noticing that 
\begin{equation}\label{eq:switching from duality bracket to entropy dissipation}
\frac{\de}{\de t} E[f] = \langle E'(f), \partial_t f \rangle_\Upsilon = \langle u, \partial_t ((E^*)'[u]) \rangle_{\Upsilon}. 
\end{equation}

\section{Galerkin Approximation for the Transformed Regularised System}\label{sec:galerkin}

The main result of this section is as follows. We shall prove it in several steps: first, we obtain the existence of the Galerkin approximations to the regularised problem \eqref{eq:eps eqn} in \S \ref{sec:existence galerkin}, and then we obtain the desired uniform estimates in \S \ref{sec:unif est}. 

\begin{prop}[Existence and uniform estimates for Galerkin approximations]\label{prop:transformed reg sys existence and est}
   Let $\varepsilon \in (0,1)$ and $n \in \mathbb{N}$. There exists functions $u^\varepsilon_n,f^\varepsilon_n,\rho^\varepsilon_n \in C^1(\overline{\Upsilon}_T)$ satisfying the relations 
\begin{equation}\label{eq:ae rel for reg sys}
    f^\varepsilon_n(t,x,\theta) = \frac{e^{u^\varepsilon_n(t,x,\theta)}}{1+\int_0^{2\pi} e^{u^\varepsilon_n(t,x,\theta)} \d \theta}, \qquad \rho^\varepsilon_n(t,x) = \int_0^{2\pi} f^\varepsilon_n(t,x,\theta) \d \theta \in [0,1] 
\end{equation}
   in $\Upsilon_T$, such that, for all $\psi \in \mathcal{A}$, there holds 
\begin{equation}\label{eq:weak form reg sys}
    \begin{aligned}
  - \langle \partial_t (\varepsilon u^\varepsilon_n + &f^\varepsilon_n) , \psi \rangle + \pe\int_{\Upsilon_T} (1-\rho^\varepsilon_n)f^\varepsilon_n \e(\theta) \cdot \nabla \psi \d \bxi \d t \\ 
      =&\int_{\Upsilon_T} \Big( \varepsilon \nabla_{\bxi}u^\varepsilon_n \cdot \nabla_{\bxi}\psi + D_e\big((1-\rho^\varepsilon_n)\nabla f^\varepsilon_n + f^\varepsilon_n \nabla \rho^\varepsilon_n \big) \cdot \nabla \psi + \partial_\theta f^\varepsilon_n \partial_\theta \psi \Big) \d \bxi \d t, 
    \end{aligned}
\end{equation}
Furthermore there exists a positive constant $C$, independent of $\varepsilon$ and $n$, such that 
 \begin{equation}\label{eq:main est reg i}
    \begin{aligned}
      \esssup_{t \in [0,T]} E[f^\varepsilon_n](t) +  \esssup_{t \in [0,T]}\int_\Upsilon |\sqrt{\varepsilon} u^\varepsilon_n(t)|^2 \d \bxi + \int_{\Upsilon_T} |\nabla_{\bxi} \sqrt{\varepsilon} u^\varepsilon_n|^2 \d \bxi \d t& \\ 
      + \int_{\Upsilon_T} (1-\rho^\varepsilon_n)|\nabla_{\bxi} \sqrt{f^\varepsilon_n} |^2  \d \bxi \d t + \int_{\Omega_T} |\nabla \sqrt{1-\rho^\varepsilon_n}|^2 \d x \d t& \\ 
      + \int_{\Upsilon_T} |\partial_\theta  \sqrt{f^\varepsilon_n}|^2 \d \bxi \d t + \int_{\Omega_T} |\nabla \rho^\varepsilon_n|^2  \d x \d t & \leq C, 
        \end{aligned}
\end{equation}
as well as 
\begin{align}
     \Vert f^\varepsilon_n \Vert_{L^3(\Upsilon_T)} \leq C \label{eq:sqrt func di ben reg} \\ 
     \Vert \sqrt{(1-\rho^\varepsilon_n)f^\varepsilon_n} \Vert_{L^{\frac{10}{3}}(\Upsilon_T)} \leq C \label{eq:gradient of f sqrt 1- rh rego} \\ 
     \Vert f^\varepsilon_n \sqrt{1-\rho^\varepsilon_n} \Vert_{L^{\frac{6}{5}}(0,T;W^{1,\frac{6}{5}}(\Upsilon))} \leq C, \label{eq:bound on f sqrt 1-rho main one for limit reg} 
     \end{align}
     and the time-derivative bounds 
     \begin{align}
     \Vert \partial_t(\varepsilon u^\varepsilon_n + f^\varepsilon_n) \Vert_{X'} \leq C, \label{eq:main time deriv reg i} \\ 
        \Vert \partial_t(\varepsilon U^\varepsilon_n + \rho^\varepsilon ) \Vert_{Y'} \leq C, \label{eq:main time deriv reg ii}
\end{align}
where $U^\varepsilon_n(t,x) := \int_0^{2\pi} u^\varepsilon_n(t,x,\theta) \d \theta$ also satisfies 
\begin{equation}\label{eq:Ueps unif bound}
   \Vert \sqrt{\varepsilon} U^\varepsilon_n \Vert_{L^\infty(0,T;L^2(\Omega))} + \Vert \sqrt{\varepsilon} U^\varepsilon_n \Vert_{L^2(0,T;H^1(\Omega))} \leq C. 
\end{equation} 
\end{prop}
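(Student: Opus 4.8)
The plan is to prove Proposition~\ref{prop:transformed reg sys existence and est} in two stages, as the section structure already suggests: first construct the Galerkin approximations $u^\varepsilon_n$ to the regularised equation \eqref{eq:eps eqn}, and then derive the uniform estimates by testing with the entropy variable.

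\textbf{Step 1: Galerkin construction.} I would fix $\varepsilon \in (0,1)$ and a smooth orthonormal basis $\{w_k\}_{k \in \mathbb{N}}$ of $L^2_\per(\Upsilon)$ consisting of eigenfunctions of the periodic Laplacian $-\Delta_{\bxi}$ (Fourier modes), so that $\{w_k\}$ is also orthogonal in $H^1_\per(\Upsilon)$. I seek $u^\varepsilon_n(t,\cdot) = \sum_{k=1}^n a_k^{n}(t) w_k$ solving the finite-dimensional ODE system obtained by testing \eqref{eq:eps eqn} against each $w_j$, $j=1,\dots,n$:
\[
\big\langle (\varepsilon \id + A(u^\varepsilon_n))[\partial_t u^\varepsilon_n], w_j \big\rangle = - \int_\Upsilon \big( (\varepsilon I + \tilde M(u^\varepsilon_n))\nabla_{\bxi} u^\varepsilon_n + \tilde M(u^\varepsilon_n) V \big)\cdot \nabla_{\bxi} w_j \, \d\bxi,
\]
with initial data the $L^2$-projection of $u^\varepsilon_0$ onto $\mathrm{span}\{w_1,\dots,w_n\}$. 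The operator $\varepsilon \id + A(u)$ is symmetric positive definite uniformly (the $\varepsilon \id$ part alone gives a lower bound $\varepsilon |\zeta|^2$ on the associated quadratic form, since $A(u) \ge 0$), so the $n\times n$ "mass matrix" $B_{jk}(a) := \langle (\varepsilon\id + A(\sum a_\ell w_\ell))w_k, w_j\rangle$ is invertible with inverse bounded locally uniformly in $a$; hence the ODE system can be written in the resolved form $\dot a = F(a)$ with $F$ locally Lipschitz (everything depends smoothly on $a$ since $e^u$ and $(1+\int e^u)^{-1}$ are smooth in the finitely many coefficients), and Picard--Lindelöf gives a unique local $C^1$ solution. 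A priori bounds from Step 2 below (which are in particular valid for the Galerkin system, obtained by testing with $u^\varepsilon_n$ itself, which is an admissible test function in the Galerkin scheme) prevent blow-up, so the solution is global on $[0,T]$. Smoothness in $\overline{\Upsilon}_T$ then follows because each $w_k$ is smooth and periodic, and one bootstraps regularity in $t$ from the ODE; the formulas \eqref{eq:ae rel for reg sys} define $f^\varepsilon_n,\rho^\varepsilon_n$ directly and inherit $C^1$ regularity, with $\rho^\varepsilon_n \in [0,1]$ automatic from \eqref{eq:rho in terms of entropy variable}. Passing the test against $w_j$ to general $\psi \in \mathcal{A}$ and integrating by parts in time yields \eqref{eq:weak form reg sys}, using $\partial_t((E^*)'[u^\varepsilon_n]) = \partial_t f^\varepsilon_n$ and $(\varepsilon\id + A(u))[\partial_t u] = \varepsilon \partial_t u + \partial_t f$.

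\textbf{Step 2: entropy estimate and its consequences.} Testing \eqref{eq:eps eqn} with $u^\varepsilon_n$ and using \eqref{eq:switching from duality bracket to entropy dissipation}-type identities, I get
\[
\frac{\de}{\de t}\Big( E[f^\varepsilon_n] + \tfrac{\varepsilon}{2}\|u^\varepsilon_n\|_{L^2(\Upsilon)}^2 \Big) + \int_\Upsilon \big(\varepsilon|\nabla_{\bxi}u^\varepsilon_n|^2 + \nabla_{\bxi}u^\varepsilon_n \cdot \tilde M(u^\varepsilon_n)\nabla_{\bxi}u^\varepsilon_n\big)\d\bxi = -\pe \int_\Upsilon \nabla_{\bxi}u^\varepsilon_n \cdot \tilde M(u^\varepsilon_n) V \, \d\bxi.
\]
The key algebraic computation is to identify the Fisher-information-type dissipation term $\nabla_{\bxi}u^\varepsilon_n \cdot \tilde M \nabla_{\bxi}u^\varepsilon_n$: using $\nabla_{\bxi}u = \nabla_{\bxi}(\log f - \log(1-\rho))$ and the explicit form \eqref{eq:tilde M def}, this should expand (exactly as in \cite{BoundEntropy}) into a sum of nonnegative squares controlling precisely $(1-\rho^\varepsilon_n)|\nabla_{\bxi}\sqrt{f^\varepsilon_n}|^2$, $|\nabla\sqrt{1-\rho^\varepsilon_n}|^2$, $|\partial_\theta\sqrt{f^\varepsilon_n}|^2$, and (via the angular component and the $\rho$-equation structure) $|\nabla\rho^\varepsilon_n|^2$. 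The right-hand drift term is absorbed by Young's inequality: $|\pe\int \nabla_{\bxi}u\cdot\tilde M V| \le \frac12\int \nabla_{\bxi}u\cdot\tilde M\nabla_{\bxi}u + \frac{\pe^2}{2}\int V\cdot\tilde M V$, and since $\tilde M$ has entries bounded by $D_e$ and $1$ respectively and $|V| = |\pe \e(\theta)|$ is bounded, the last term is $\le C$. Integrating in $t$ and using the bound on $E[f^\varepsilon_0] + \varepsilon\|u^\varepsilon_0\|_{L^2}^2$ from Lemma~\ref{lem:regularised initial data}, and the fact that $E$ is bounded below when $0\le\rho\le1$, gives \eqref{eq:main est reg i}. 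The higher-integrability bounds \eqref{eq:sqrt func di ben reg}--\eqref{eq:bound on f sqrt 1-rho main one for limit reg} follow by interpolation: from \eqref{eq:main est reg i} one has $\sqrt{f^\varepsilon_n} \in L^\infty_t L^2_x \cap L^2_t H^1_{\bxi}$ (the latter from $\partial_\theta\sqrt{f}$ plus $\sqrt{1-\rho}\nabla\sqrt{f}$ combined with control of $\nabla\sqrt{1-\rho}$, noting $f = (1-\rho)\cdot\frac{f}{1-\rho}$ and $\rho\in[0,1]$), hence by Gagliardo--Nirenberg $\sqrt{f^\varepsilon_n}\in L^{10/3}(\Upsilon_T)$ in dimension $3$, i.e. $f^\varepsilon_n\in L^{5/3}$; and one upgrades to $L^3$ using the gain from angular diffusion as described in the interpolation appendix. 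For \eqref{eq:gradient of f sqrt 1- rh rego}, write $\sqrt{(1-\rho)f}$ and apply the same Gagliardo--Nirenberg argument to this quantity whose gradient is controlled by \eqref{eq:main est reg i}. The time-derivative bounds \eqref{eq:main time deriv reg i}--\eqref{eq:main time deriv reg ii} come from the weak formulation \eqref{eq:weak form reg sys} (resp.\ its angle-integrated version for $\rho$): each term on the right is estimated by the already-established spatial bounds paired against $\nabla\psi$ or $\partial_\theta\psi$ via Hölder, exactly as in the well-definedness computation \eqref{eq:weak form is well defined in intro}, yielding duality bounds in $X'$ and $Y'$. Finally \eqref{eq:Ueps unif bound} follows from \eqref{eq:main est reg i}: $\sqrt{\varepsilon}u^\varepsilon_n$ is bounded in $L^\infty_t L^2_{\bxi}\cap L^2_t H^1_{\bxi}$, and $\sqrt{\varepsilon}U^\varepsilon_n = \int_0^{2\pi}\sqrt{\varepsilon}u^\varepsilon_n\,\d\theta$ inherits the same bounds on $\Omega$ by Jensen/Minkowski.

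\textbf{Main obstacle.} The delicate point is the algebra of the entropy dissipation: showing that $\int \nabla_{\bxi}u^\varepsilon_n\cdot\tilde M(u^\varepsilon_n)\nabla_{\bxi}u^\varepsilon_n\,\d\bxi$ genuinely dominates \emph{all} the nonnegative quantities appearing in \eqref{eq:main est reg i}, in particular the cross-diffusion structure that produces both $(1-\rho)|\nabla\sqrt f|^2$ and $|\nabla\sqrt{1-\rho}|^2$ simultaneously (the "infinitely many species" feature), and the separate control of $|\nabla\rho|^2$ which is \emph{not} degenerate and must come from combining the angular integration with the spatial dissipation rather than from $\tilde M$ directly — this is the analog of the regularity gain for $\rho$ mentioned in the introduction, and at the Galerkin level it requires carefully integrating the $f$-equation in $\theta$ and exploiting that the resulting equation for $\rho^\varepsilon_n$ is uniformly parabolic. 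A secondary technical nuisance is making the formal identity \eqref{eq:switching from duality bracket to entropy dissipation} rigorous at the Galerkin level given that $u^\varepsilon_n$ lies only in the finite-dimensional span (but since that span consists of smooth functions, $u^\varepsilon_n$ is an admissible test object and the chain rule applies classically, so this is routine).
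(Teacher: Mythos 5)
Your overall architecture (Galerkin in a spectral basis, mass-matrix inversion plus Picard--Lindel\"of, entropy dissipation by testing with $u^\varepsilon_n$, Young absorption of the drift, interpolation for the higher integrability, duality pairing for the time derivatives) is the paper's, but there is one genuine gap and it sits exactly where the paper warns it would. After integrating the entropy identity in time, the right-hand side is $E[f^\varepsilon_n(0)]+\tfrac{\varepsilon}{2}\Vert u^\varepsilon_n(0)\Vert_{L^2}^2$, where $f^\varepsilon_n(0)$ is the nonlinear transform \eqref{eq:ae rel for reg sys} of the \emph{projected} datum $u^\varepsilon_n(0)$ --- not $E[f^\varepsilon_0]$, which is what you invoke from Lemma \ref{lem:regularised initial data}. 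Bounding $E[f^\varepsilon_n(0)]$ uniformly in $n$ requires $L^\infty$ control of $u^\varepsilon_n(0)$ (since $E[f]\leq \langle u,f\rangle + C \leq |\Omega|\,\Vert u\Vert_{L^\infty}+C$ using $\int_0^{2\pi} f\,\d\theta\leq 1$), and no $L^p$ bound with $p<\infty$ suffices: concentrating $u$ on a thin $\theta$-set makes $\int f\log f$ blow up while $\Vert u\Vert_{L^p}$ stays small. This is precisely why the paper works with eigenfunctions of the biharmonic operator and projects the datum in $H^2_\per(\Upsilon)$, so that Morrey's embedding $H^2(\Upsilon)\hookrightarrow L^\infty(\Upsilon)$ in dimension three gives $u^\varepsilon_n(0)\to u^\varepsilon_0$ uniformly and hence $E[f^\varepsilon_n(0)]\to E[f^\varepsilon_0]$ by dominated convergence. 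With your Fourier basis and ``$L^2$-projection'' this is repairable --- truncation of the smooth datum $u^\varepsilon_0$ is simultaneously the $H^2$-projection and converges in $H^2_\per(\Upsilon)$, hence in $L^\infty$ --- but as written the step is unjustified and must be added.

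Two further corrections. First, your intermediate claim that $\sqrt{f^\varepsilon_n}\in L^2(0,T;H^1(\Upsilon))$ is false: the spatial gradient is only controlled with the degenerate weight, $(1-\rho^\varepsilon_n)|\nabla\sqrt{f^\varepsilon_n}|^2\in L^1(\Upsilon_T)$, and one cannot remove the weight using $\nabla\sqrt{1-\rho^\varepsilon_n}\in L^2$. The full-gradient $L^\infty_tL^2\cap L^2_tH^1$ bound (and hence the $L^{10/3}$ interpolation, giving \eqref{eq:gradient of f sqrt 1- rh rego}) holds for $\sqrt{(1-\rho^\varepsilon_n)f^\varepsilon_n}$, while \eqref{eq:sqrt func di ben reg} comes, as you also indicate and as the paper does, solely from the angular interpolation Lemma \ref{lem:our dibenedetto} applied to $\sqrt{f^\varepsilon_n}$ with the sup bound $\Vert f^\varepsilon_n\Vert_{L^\infty(\Omega_T;L^1(0,2\pi))}\leq 1$; drop the $f\in L^{5/3}$ detour. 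Second, in your ``main obstacle'' you propose extracting $\int_{\Omega_T}|\nabla\rho^\varepsilon_n|^2$ from uniform parabolicity of the angle-integrated equation; at the Galerkin level that equation is \eqref{eq:rho eqn in galerkin} and couples $\rho^\varepsilon_n$ with $\varepsilon U^\varepsilon_n$ in the time derivative, so this route is awkward. In fact no separate argument is needed: expanding the spatial part of the dissipation as $D_e f(1-\rho)\big|\nabla\log\tfrac{f}{1-\rho}\big|^2 = D_e\big[(1-\rho)\tfrac{|\nabla f|^2}{f}+2\nabla f\cdot\nabla\rho+ f\tfrac{|\nabla\rho|^2}{1-\rho}\big]$ and integrating the cross term in $\theta$ (Fubini) yields exactly $2D_e\int_{\Omega_T}|\nabla\rho^\varepsilon_n|^2$, while the last term gives $4D_e\rho^\varepsilon_n|\nabla\sqrt{1-\rho^\varepsilon_n}|^2$; combining the two produces the unweighted $|\nabla\sqrt{1-\rho^\varepsilon_n}|^2$ bound in \eqref{eq:main est reg i}. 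Since this expansion is the computational heart of the estimate, it should be carried out rather than asserted by analogy with \cite{BoundEntropy}.
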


\subsection{Existence of Galerkin approximants}\label{sec:existence galerkin}

In what follows, we use that the biharmonic operator $\Delta_{\bxi}^2$, considered with periodic boundary conditions in space and angle on $\Upsilon$, is a coercive, bounded, linear self-adjoint operator with respect to the inner product of $H^2_\per(\Upsilon)$, where 
\begin{equation*}
    \langle f, g \rangle_{H^2_\per(\Upsilon)} = \int_\Upsilon \Delta_{\bxi}f \Delta_{\bxi} g \d \bxi. 
\end{equation*}
It follows from the classical spectral theorem on Hilbert spaces that $\Delta_{\bxi}^2$ admits a discrete spectrum $0 < \lambda_1 < \lambda_2 < \dots$~with eigenfunctions $\{\varphi_j\}_{j=1}^\infty$ such that there holds $\Delta_{\bxi}^2 \varphi_j = \lambda_j \varphi_j$ for all $j$; a classical bootstrapping procedure ensures that $\varphi_j \in C^\infty_\per(\overline{\Upsilon})$ for all $j$, \textit{cf.}~\textit{e.g.}~\cite[Chapitre 7]{AllaireCours}. Furthermore, $\{\varphi_j\}_{j=1}^\infty$ is an orthonormal basis of $H^2_\per(\Upsilon)$. 

We note in passing that our choice of functional setting $H^2_\per(\Upsilon)$ is motivated by the fact that we must control the $L^\infty$ norm of the sequence of Galerkin approximations $\{u_n^\varepsilon(0,\cdot)\}_n$ in order to estimate the approximations $\{f_n^\varepsilon(0,\cdot)\}_n$ via the entropy variable formula \eqref{eq:ae rel for reg sys}; no $L^p$ control for $p < \infty$ suffices. This is made possible by approximating $u^\varepsilon(0,\cdot)$ in $H^2_\per(\Upsilon)$ and using Morrey's inequality over the three-dimensional open domain $\Upsilon$; see Step 1 of the proof of Lemma \ref{lem:unif est for galerkin} for details.

For each fixed $n\in\mathbb{N}$ we define the closed subspace $\mathcal{V}_n := \overline{\mathrm{span}}\{\varphi_j\}_{j=1}^n$. Note from our previous discussion that $\bigcup_n \mathcal{V}_n$ is dense in $H^2_\per(\Upsilon)$, $\mathcal{V}_n \subset \mathcal{V}_{n+1}$ for all $n$, and, for each $n$, the finite subset $\{\varphi_j\}_{j=1}^n$ is an orthonormal basis of $\mathcal{V}_n$. We now define the $n$-th approximant to the entropy variable $u^\varepsilon$, to be given by the explicit formula 
\begin{equation}\label{eq:ansatz for galerkin}
    u^\varepsilon_n(t,x,\theta) := \sum_{j=1}^n \alpha_j^{\varepsilon,n}(t) \varphi_j(x,\theta), 
\end{equation}
where the coefficients $\alpha^{\varepsilon,n}_j$ are to be determined. This construction is such that, for large $n$, the function $u^\varepsilon_n$ ought to be a suitable approximation of $u^\varepsilon$; a weak solution in $H^1_\per(\Upsilon)$ of \eqref{eq:eps eqn}.

One can see, by plugging in $u^\varepsilon_n$ in place of $u^\varepsilon$ in \eqref{eq:eps eqn}, that the coefficients $\alpha^{\varepsilon,n}_j$ will be determined from requiring 
\begin{equation}\label{eq:galerkin how to get coeffs}
    (\varepsilon \id + A(u^\varepsilon_n)) \Big[\sum_{j=1}^n\frac{\de \alpha^{\varepsilon,n}_j}{\de t} \varphi_j \Big] =  \nabla_{\bxi} \cdot \Big( (\varepsilon I + \tilde M(u^\varepsilon_n)) \sum_{j=1}^n \alpha^{\varepsilon,n}_j \nabla_{\bxi} \varphi_j +  \tilde M(u^\varepsilon_n) V \Big) 
\end{equation}
in the weak sense in $\mathcal{V}_n$, \textit{i.e.}, for all $\phi \in \mathcal{V}_n$, there holds 
\begin{equation*}
  \begin{aligned}
      \sum_{j=1}^n \frac{\de \alpha^{\varepsilon,n}_j}{\de t} \int_\Upsilon (\varepsilon \id + & A(u^\varepsilon_n))  [\varphi_j] \phi \d \bxi \\ 
      &=  -\sum_{j=1}^n \alpha^{\varepsilon,n}_j\int_\Upsilon \nabla_{\bxi} \phi \cdot \left( (\varepsilon I + \tilde M(u^\varepsilon_n)) \nabla_{\bxi} \varphi_j +  \tilde M(u^\varepsilon_n) V \right) \d \bxi, 
  \end{aligned} 
\end{equation*}
for all $t \in (0,T)$, where we used linearity and $A(u^\varepsilon_n)[\frac{\de}{\de t}\alpha_j^{\varepsilon,n}(t)\varphi_j] = \frac{\de}{\de t}\alpha_j^{\varepsilon,n}(t)A(u^\varepsilon_n)[\varphi_j]$, which can be seen directly from the formula \eqref{eq:explicit Hessian}. Using the decomposition of $\phi$ into the basis functions $\{\varphi_k\}_{k=1}^n$, the previous relation is equivalent to requiring: for all $k\in\{1,\dots,n\}$, there holds 
\begin{equation*}
  \begin{aligned}
      \sum_{j=1}^n \frac{\de \alpha^{\varepsilon,n}_j}{\de t} \int_\Upsilon (\varepsilon \id + & A(u^\varepsilon_n))  [\varphi_j] \varphi_k \d \bxi \\ 
      &=  -\sum_{j=1}^n \alpha^{\varepsilon,n}_j\int_\Upsilon \nabla_{\bxi} \varphi_k \cdot \left( (\varepsilon I + \tilde M(u^\varepsilon_n)) \nabla_{\bxi} \varphi_j +  \tilde M(u^\varepsilon_n) V \right) \d \bxi, 
  \end{aligned} 
\end{equation*}
for all $t \in (0,T)$. Note that, provided the coefficients $\alpha^{\varepsilon,n}_j$ belong to $C^1([0,T])$, all terms in the previous equality are well-defined; in particular, the integrals are finite due to the boundedness of the functions $\varphi_j \in C^\infty_\per(\overline{\Upsilon})$. The above can be recast as a quasilinear system of ODEs of the form 
\begin{equation}\label{eq:ODE sys galerkin}
    \left\lbrace\begin{aligned}
    & \mathcal{M}(\gamma^{\varepsilon,n}(t)) \frac{\de \gamma^{\varepsilon,n}}{\de t}(t) = \mathcal{R}(\gamma^{\varepsilon,n}(t)), \\ 
    & \gamma^{\varepsilon,n}(0) = \gamma^{\varepsilon,n}_0, 
    \end{aligned}\right. 
\end{equation}
where we have defined the vector function $\gamma^{\varepsilon,n} = (\alpha^{\varepsilon,n}_1, \dots, \alpha^{\varepsilon,n}_n)$ and the entries of the matrices are given explicitly by 
\begin{equation}\label{eq:matrices for ODE sys galerkin}
    \begin{aligned}
        &\mathcal{M}(\gamma^{\varepsilon,n}(t))_{k j} = \int_\Upsilon (\varepsilon \id + A(u^\varepsilon_n))  [\varphi_j] \varphi_k \d \bxi, \\ 
        & \mathcal{R}(\gamma^{\varepsilon,n}(t))_{k j} = -\int_\Upsilon \nabla_{\bxi} \varphi_k \cdot \left( (\varepsilon I + \tilde M(u^\varepsilon_n)) \nabla_{\bxi} \varphi_j +  \tilde M(u^\varepsilon_n) V \right) \d \bxi, 
    \end{aligned}
\end{equation}
where $k,j$ range over all possible values in $\{1,\dots,n\}$, and the initial data is given explicitly by 
\begin{equation}\label{eq:initial data for galerkin sys}
    \alpha_j^{\varepsilon,n}(0) = \langle u^\varepsilon_0, \varphi_j \rangle_{H^2_\per(\Upsilon)}, 
\end{equation}
such that, with $u^\varepsilon_n(0) = u^\varepsilon_{n,0} \equiv \sum_{j=1}^n \alpha^{\varepsilon,n}_j(0) \varphi_j$, there holds the strong convergence 
\begin{equation}\label{eq:strong convergence initial data galerkin}
    u^{\varepsilon}_n(0) \to u^\varepsilon_0 \quad \text{in } H^2_\per(\Upsilon). 
\end{equation}

It follows from the positivity $\varepsilon>0$ and the positive semidefiniteness of the operator $A$ that the matrix $\mathcal{M}$ is invertible, and furthermore the product $\mathcal{M}^{-1}\mathcal{R}$ is locally Lipschitz; indeed, this quantity and its derivatives involve only exponential functions of the $\alpha^{\varepsilon,n}_j$ and their products, weighted by the reciprocal of a determinant which is bounded strictly away from zero for each $\varepsilon>0$. In turn, the Cauchy--Lipschitz Theorem for ODE systems implies the existence and uniqueness of $\gamma^{\varepsilon,n} \in C^1([0,T])$ solving \eqref{eq:ODE sys galerkin}.

It follows that we have proved the following result, which concludes this subsection. 

\begin{lemma}\label{lem:existence of galerkin coeffs}
    Let $\varepsilon \in (0,1)$, $n\in\mathbb{N}$, and $u^\varepsilon_{n,0} \in H^2_\per(\Upsilon)$ be given by 
    \begin{equation*}
        u^\varepsilon_{n,0} = \sum_{j=1}^n \varphi_j \langle u^\varepsilon_0, \varphi_j \rangle_{H^2_\per(\Upsilon)}. 
    \end{equation*}
    There exists $u^\varepsilon_n \in C^1(\overline{\Upsilon}_T)$ such that $u^\varepsilon_n(t,\cdot) \in \mathcal{V}_n$ for all $t \in [0,T]$, $u^\varepsilon_n(0) = u^\varepsilon_{n,0}$ and, for all $\psi \in \mathcal{V}_n$, there holds for all $t \in (0,T)$ 
    \begin{equation}\label{eq:galerkin regularised weak form}
  \int_{\Upsilon}(\varepsilon \id + A(u^\varepsilon_n)) [\partial_t u^{\varepsilon}_n] \psi \d \bxi =  - \int_\Upsilon \nabla_{\bxi}\psi \cdot \left( (\varepsilon I + \tilde M(u^\varepsilon_n)) \nabla_{\bxi} u^\varepsilon_n +  \tilde M(u^\varepsilon_n) V \right) \d \bxi, 
\end{equation}
\textit{i.e.}, in the weak sense with respect to $\mathcal{V}_n$, 
\begin{equation}\label{eq:galerkin regularised}
    (\varepsilon \id + A(u^\varepsilon_n)) [\partial_t u^{\varepsilon}_n] =  \nabla_{\bxi} \cdot \left( (\varepsilon I + \tilde M(u^\varepsilon_n)) \nabla_{\bxi} u^\varepsilon_n +  \tilde M(u^\varepsilon_n) V \right). 
\end{equation}
\end{lemma}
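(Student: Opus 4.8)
The plan is to realise the Galerkin approximant through the explicit ansatz \eqref{eq:ansatz for galerkin} and to reduce the construction to solving the finite-dimensional system of ODEs \eqref{eq:ODE sys galerkin}. First I would substitute $u^\varepsilon_n = \sum_{j=1}^n \alpha^{\varepsilon,n}_j \varphi_j$ into \eqref{eq:eps eqn}, test against each basis element $\varphi_k$ in place of $\psi$, and use the linearity of $A(u^\varepsilon_n)$ in its argument — visible directly from the formula \eqref{eq:explicit Hessian} — to arrive at the matrix form $\mathcal{M}(\gamma^{\varepsilon,n})\dot\gamma^{\varepsilon,n} = \mathcal{R}(\gamma^{\varepsilon,n})$ with entries as in \eqref{eq:matrices for ODE sys galerkin} and initial value \eqref{eq:initial data for galerkin sys}. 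Since each $\varphi_j \in C^\infty_\per(\overline{\Upsilon})$ and $V$ is smooth and bounded, every entry of $\mathcal{M}$ and $\mathcal{R}$, viewed as a function of $\gamma \in \mathbb{R}^n$, is a (real-analytic, in particular $C^\infty$) composition of exponentials, integrals over $\Upsilon$, and rational functions whose denominator $1+\int_0^{2\pi} e^{u^\varepsilon_n}\d\theta$ is bounded below by $1$; the strong convergence \eqref{eq:strong convergence initial data galerkin} is then just the statement that the $H^2_\per(\Upsilon)$-orthogonal projection of $u^\varepsilon_0$ onto $\mathcal{V}_n$ converges, which holds because $\{\varphi_j\}$ is an orthonormal basis.

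Next I would verify that $\mathcal{M}(\gamma)$ is invertible for every $\gamma$. Writing $\mathcal{M}(\gamma) = \varepsilon\,\id + \tilde{\mathcal{A}}(\gamma)$, where $\tilde{\mathcal{A}}(\gamma)_{kj} = \int_\Upsilon A(u^\varepsilon_n)[\varphi_j]\varphi_k \d\bxi$ is the Gram matrix on $\mathcal{V}_n$ of the bilinear form $(\phi,\psi) \mapsto \int_\Upsilon A(u^\varepsilon_n)[\phi]\psi \d\bxi$, and using that this form is positive semidefinite (either because $A(u^\varepsilon_n)$ is the second variation of the convex functional $E^*$, or directly by applying the Cauchy–Schwarz inequality to \eqref{eq:explicit Hessian}), one sees that $\mathcal{M}(\gamma)$ is symmetric with smallest eigenvalue $\geq \varepsilon$, hence $\det\mathcal{M}(\gamma) \geq \varepsilon^n > 0$. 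Therefore $\gamma \mapsto \mathcal{M}(\gamma)^{-1}\mathcal{R}(\gamma)$ is well-defined and, by the cofactor formula together with the smoothness of the entries, locally Lipschitz on $\mathbb{R}^n$; the Cauchy–Lipschitz theorem yields a unique maximal solution $\gamma^{\varepsilon,n} \in C^1([0,T_{\max}))$.

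The main obstacle is upgrading this local solution to one defined on all of $[0,T]$, i.e. ruling out finite-time blow-up of $|\gamma^{\varepsilon,n}(t)|$. For this I would test the weak relation \eqref{eq:galerkin regularised weak form} with $\psi = u^\varepsilon_n(t,\cdot) \in \mathcal{V}_n$ (equivalently, multiply the $k$-th ODE by $\alpha^{\varepsilon,n}_k$ and sum over $k$), and use the identity \eqref{eq:switching from duality bracket to entropy dissipation} together with Young's inequality on the drift contribution $\tilde M(u^\varepsilon_n)V$ to obtain a differential inequality of the form $\frac{\de}{\de t}\big( E[f^\varepsilon_n] + \tfrac{\varepsilon}{2}\Vert u^\varepsilon_n \Vert_{L^2(\Upsilon)}^2 \big) + (\text{nonnegative dissipation}) \leq C\big(1 + \varepsilon\Vert u^\varepsilon_n \Vert_{L^2(\Upsilon)}^2\big)$ — this is precisely the computation carried out in detail in \S\ref{sec:unif est}. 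Since $E[f^\varepsilon_n]$ is bounded below uniformly, Grönwall's lemma gives $\sup_{[0,T_{\max})} \Vert \sqrt{\varepsilon}\, u^\varepsilon_n(t) \Vert_{L^2(\Upsilon)} \leq C(\varepsilon,n,T) < \infty$; as $\mathcal{V}_n$ is finite-dimensional, all norms on it are equivalent, so $\sup_{[0,T_{\max})} |\gamma^{\varepsilon,n}(t)| < \infty$, which forces $T_{\max} > T$ by the standard continuation criterion. Setting $u^\varepsilon_n := \sum_{j=1}^n \alpha^{\varepsilon,n}_j \varphi_j$, which lies in $C^1(\overline{\Upsilon}_T)$ since $\gamma^{\varepsilon,n} \in C^1([0,T])$ and each $\varphi_j$ is smooth, then gives a function satisfying $u^\varepsilon_n(0) = u^\varepsilon_{n,0}$ and \eqref{eq:galerkin regularised weak form}, hence \eqref{eq:galerkin regularised} in the weak sense with respect to $\mathcal{V}_n$, completing the proof.
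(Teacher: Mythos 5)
Your proposal follows essentially the same route as the paper: the ansatz \eqref{eq:ansatz for galerkin}, reduction to the quasilinear ODE system \eqref{eq:ODE sys galerkin} with the matrices \eqref{eq:matrices for ODE sys galerkin} and initial data \eqref{eq:initial data for galerkin sys}, invertibility of $\mathcal{M}$ from the positive semidefiniteness of $A$ combined with the $\varepsilon$-term, local Lipschitz continuity of $\mathcal{M}^{-1}\mathcal{R}$, and the Cauchy--Lipschitz theorem. Your explicit continuation step (the entropy-dissipation bound, boundedness of $E$ from below, and equivalence of norms on the finite-dimensional space $\mathcal{V}_n$ to exclude blow-up before time $T$) is correct and in fact fills in a point the paper leaves implicit when it asserts $\gamma^{\varepsilon,n} \in C^1([0,T])$ directly. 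One minor imprecision: since the $\varphi_j$ are orthonormal in $H^2_\per(\Upsilon)$ rather than in $L^2(\Upsilon)$, the contribution of $\varepsilon\,\id$ to $\mathcal{M}$ is $\varepsilon$ times the $L^2$ Gram matrix of $\{\varphi_j\}_{j=1}^n$, not $\varepsilon I$; that Gram matrix is still symmetric positive definite, so $\mathcal{M}(\gamma)$ remains uniformly invertible (with the lower bound $\varepsilon^n$ replaced by $\varepsilon^n$ times its determinant), and the remainder of your argument is unaffected.
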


We then define $f^\varepsilon_n \in C^1([0,T]\times\overline{\Upsilon})$ and its angle-independent density $\rho^\varepsilon_n$ by 
\begin{equation}\label{eq:f eps n in terms of u eps n pre galerkin}
f^\varepsilon_n := (E^*)'[u^\varepsilon_n] = \frac{e^{u^\varepsilon_n}}{1+\int_0^{2\pi} e^{u^\varepsilon_n} \d \theta}, \quad \rho^\varepsilon_n := \int_0^{2\pi} f^\varepsilon_n \d \theta = \frac{\int_0^{2\pi} e^{u^\varepsilon_n} \d \theta}{1+\int_0^{2\pi} e^{u^\varepsilon_n} \d \theta}, 
\end{equation}
whence 
\begin{equation*}
   (\varepsilon \id + A(u^\varepsilon_n))[\partial_t u^\varepsilon_n] = \varepsilon \partial_t u^\varepsilon_n + \partial_t((E^*)'[u^\varepsilon_n]) = \varepsilon \partial_t u^\varepsilon_n + \partial_t f^\varepsilon_n,    
\end{equation*}
and from which we obtain, using also \eqref{eq:switching from duality bracket to entropy dissipation}, 
\begin{equation}\label{eq:ent diss rel}
   \int_\Upsilon (\varepsilon \id + A(u^\varepsilon_n))[\partial_t u^\varepsilon_n]u^\varepsilon_n \d \bxi = \frac{\varepsilon}{2} \frac{\de}{\de t} \int_\Upsilon |u^\varepsilon_n|^2 \d \bxi + \frac{\de}{\de t} E[f^\varepsilon_n]. 
\end{equation}
At this point, we record the relations 
\begin{equation}\label{eq:convenient expansion}
  \begin{aligned}
      &f^\varepsilon_n \partial_\theta u^\varepsilon_n = \partial_\theta f^\varepsilon_n = 2\sqrt{f^\varepsilon_n} \partial_\theta \sqrt{f^\varepsilon_n}, \\ 
      &(1-\rho^\varepsilon_n)f^\varepsilon_n \nabla u^\varepsilon_n = (1-\rho^\varepsilon_n) \nabla  f^\varepsilon_n  + f^\varepsilon_n \nabla \rho^\varepsilon_n = 2\sqrt{(1-\rho^\varepsilon_n)f^\varepsilon_n} \nabla \sqrt{(1-\rho^\varepsilon_n)f^\varepsilon_n} + 2 f^\varepsilon_n \nabla \rho^\varepsilon_n, 
  \end{aligned} 
\end{equation}
whence the equation \eqref{eq:galerkin regularised} may be rewritten as 
\begin{equation}\label{eq:galerkin reg better}
    \varepsilon \partial_t u^\varepsilon_n + \partial_t f^\varepsilon_n + \pe \dv((1-\rho^\varepsilon_n) f^\varepsilon_n \e(\theta)) = \varepsilon \Delta_{\bxi}u^\varepsilon_n + D_e \dv( (1-\rho^\varepsilon_n) \nabla f^\varepsilon_n + f^\varepsilon_n \nabla \rho^\varepsilon_n ) + \partial^2_\theta f^\varepsilon_n. 
\end{equation}
It follows from an integration by parts that \eqref{eq:weak form reg sys} is verified. We have therefore proved the existence part of Proposition \ref{prop:transformed reg sys existence and est}. It remains to obtain the uniform estimates, which is the focus of the next section. 

\subsection{Uniform estimates for the Galerkin approximations}\label{sec:unif est}

In this section, we perform the main entropy-dissipation estimate, and obtain bounds uniform in $n$ and $\varepsilon$ for both the entropy variable and the original unknown. After this, we establish similar estimates for their respective time derivatives. 

We first remark that, for all fixed $t,x$ there holds $\Vert f^\varepsilon_n (t,x,\cdot) \Vert_{L^1((0,2\pi))} \leq 1$ as well as $f^\varepsilon_n \geq 0$ and $0 \leq \rho^\varepsilon_n \leq 1$, 
from which we get 
\begin{equation}\label{eq:unif L1 on f eps n}
    \Vert f^\varepsilon_n \Vert_{L^\infty(\Omega_T;L^1(0,2\pi))} \leq 1. 
\end{equation}
This bound will be used in the proof of the main result of this subsection, which we now state. 

\begin{lemma}[Uniform estimates for Galerkin approximation]\label{lem:unif est for galerkin}
Let $\varepsilon,n,u^\varepsilon_n$ be as in Lemma \ref{lem:existence of galerkin coeffs} and $f^\varepsilon_n,\rho^\varepsilon_n$ be as in \eqref{eq:f eps n in terms of u eps n pre galerkin}. There exists a positive constant $C$, independent of $\varepsilon,n$ such that there holds the uniform estimates: 
    \begin{equation}\label{eq:main est gal i}
    \begin{aligned}
      \sup_{t \in [0,T]} E[f^\varepsilon_n](t) +  \sup_{t \in [0,T]}\int_\Upsilon |\sqrt{\varepsilon} u^\varepsilon_n(t)|^2 \d \bxi + \int_{\Upsilon_T} |\nabla_{\bxi} \sqrt{\varepsilon} u^\varepsilon_n|^2 \d \bxi \d t& \\ 
      + \int_{\Upsilon_T} (1-\rho^\varepsilon_n)|\nabla_{\bxi} \sqrt{f^\varepsilon_n} |^2  \d \bxi \d t + \int_{\Upsilon_T} |\nabla \sqrt{1-\rho^\varepsilon_n}|^2 \d x \d t& \\ 
      + \int_{\Upsilon_T} |\partial_\theta  \sqrt{f^\varepsilon_n}|^2 \d \bxi \d t + \int_{\Omega_T} |\nabla \rho^\varepsilon_n|^2  \d x \d t 
      &\leq C, 
        \end{aligned}
\end{equation}
and 
\begin{align}
     \Vert f^\varepsilon_n \Vert_{L^3(\Upsilon_T)} \leq C \label{eq:sqrt func di ben} \\ 
     \Vert \sqrt{(1-\rho^\varepsilon_n)f^\varepsilon_n} \Vert_{L^{\frac{10}{3}}(\Upsilon_T)} \leq C \label{eq:gradient of f sqrt 1- rho} \\ 
     \Vert f^\varepsilon_n \sqrt{1-\rho^\varepsilon_n} \Vert_{L^{\frac{6}{5}}(0,T;W^{1,\frac{6}{5}}(\Upsilon))} \leq C. \label{eq:bound on f sqrt 1-rho main one for limit}
\end{align}
\end{lemma}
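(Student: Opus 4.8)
The strategy is the standard entropy-dissipation estimate adapted to the doubled (space-angle) setting. First I would test the Galerkin equation \eqref{eq:galerkin regularised weak form} with the choice $\psi = u^\varepsilon_n(t,\cdot) \in \mathcal{V}_n$, which is admissible since $u^\varepsilon_n(t,\cdot)$ lies in $\mathcal{V}_n$ by construction. Using the identity \eqref{eq:ent diss rel}, the left-hand side becomes $\tfrac{\varepsilon}{2}\tfrac{\de}{\de t}\int_\Upsilon |u^\varepsilon_n|^2 \d\bxi + \tfrac{\de}{\de t}E[f^\varepsilon_n]$. On the right-hand side, one expands the flux using the explicit form of $\tilde M(u^\varepsilon_n)$ in \eqref{eq:tilde M def} and the algebraic identities \eqref{eq:convenient expansion}: the $\varepsilon I$ piece produces $\varepsilon\int_{\Upsilon}|\nabla_{\bxi}u^\varepsilon_n|^2$, the spatial part of $\tilde M$ produces $D_e \int_\Upsilon (1-\rho^\varepsilon_n)f^\varepsilon_n |\nabla u^\varepsilon_n|^2$, and the angular part produces $\int_\Upsilon f^\varepsilon_n |\partial_\theta u^\varepsilon_n|^2$; finally the drift term $\tilde M(u^\varepsilon_n)V$ yields a term $-\pe \int_\Upsilon (1-\rho^\varepsilon_n)f^\varepsilon_n \e(\theta)\cdot\nabla u^\varepsilon_n$. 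Integrating in time over $(0,t)$ and using the bound on $E[f^\varepsilon_0] + \varepsilon\|u^\varepsilon_0\|_{L^2}^2$ from Lemma \ref{lem:regularised initial data} (together with $u^\varepsilon_n(0)\to u^\varepsilon_0$ in $H^2_\per(\Upsilon)$, hence in $L^2$, from \eqref{eq:strong convergence initial data galerkin}) controls the initial entropy uniformly.

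\textbf{Rewriting the dissipation as square-root gradients.} The key is to recognize the "good terms." Using \eqref{eq:convenient expansion} one has $(1-\rho^\varepsilon_n)f^\varepsilon_n|\nabla u^\varepsilon_n|^2 = \big((1-\rho^\varepsilon_n)\nabla f^\varepsilon_n + f^\varepsilon_n\nabla\rho^\varepsilon_n\big)\cdot\nabla u^\varepsilon_n$, and a short computation (completing the square, as in \cite{BoundEntropy}) shows this equals $4(1-\rho^\varepsilon_n)|\nabla\sqrt{f^\varepsilon_n}|^2 + |\nabla\rho^\varepsilon_n|^2/(1-\rho^\varepsilon_n) \cdot(\text{something})$; more precisely one wants to extract the three nonnegative pieces $\int(1-\rho^\varepsilon_n)|\nabla_{\bxi}\sqrt{f^\varepsilon_n}|^2$, $\int|\nabla\sqrt{1-\rho^\varepsilon_n}|^2$, and $\int|\nabla\rho^\varepsilon_n|^2$ appearing in \eqref{eq:main est gal i}. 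Similarly $f^\varepsilon_n|\partial_\theta u^\varepsilon_n|^2 = 4|\partial_\theta\sqrt{f^\varepsilon_n}|^2$ gives the angular term. The $\varepsilon$-regularisation contributes $\varepsilon\sup_t\|u^\varepsilon_n\|_{L^2}^2$ and $\varepsilon\int|\nabla_{\bxi}u^\varepsilon_n|^2$ directly. The drift term is absorbed: estimate $|\pe\int(1-\rho^\varepsilon_n)f^\varepsilon_n\e(\theta)\cdot\nabla u^\varepsilon_n| \le \pe\int\sqrt{(1-\rho^\varepsilon_n)f^\varepsilon_n}\cdot\sqrt{(1-\rho^\varepsilon_n)f^\varepsilon_n}|\nabla u^\varepsilon_n|$, apply Young's inequality to move half the $D_e\int(1-\rho^\varepsilon_n)f^\varepsilon_n|\nabla u^\varepsilon_n|^2$ term to the left, leaving a remainder $C\pe^2\int(1-\rho^\varepsilon_n)f^\varepsilon_n \le C\pe^2\int\rho^\varepsilon_n \le C\pe^2|\Upsilon_T|$ since $0\le(1-\rho^\varepsilon_n)f^\varepsilon_n \le f^\varepsilon_n$ and $\rho^\varepsilon_n\le 1$. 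This closes \eqref{eq:main est gal i}.

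\textbf{The integrability gain.} For \eqref{eq:sqrt func di ben}–\eqref{eq:bound on f sqrt 1-rho main one for limit}, I would exploit the angular diffusion. From \eqref{eq:main est gal i} we know $\partial_\theta\sqrt{f^\varepsilon_n}\in L^2(\Upsilon_T)$ and $\|f^\varepsilon_n\|_{L^\infty(\Omega_T;L^1(0,2\pi))}\le 1$ from \eqref{eq:unif L1 on f eps n}; combining these via a one-dimensional Gagliardo–Nirenberg inequality in the $\theta$-variable (fibered over $(t,x)$) upgrades $\sqrt{f^\varepsilon_n}$ from $L^2_\theta$ to better integrability, and then integrating in $(t,x)$ gives $\sqrt{f^\varepsilon_n}\in L^6(\Upsilon_T)$, i.e.\ $f^\varepsilon_n\in L^3(\Upsilon_T)$, which is \eqref{eq:sqrt func di ben}. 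For \eqref{eq:gradient of f sqrt 1- rho}, I would instead interpolate $\sqrt{(1-\rho^\varepsilon_n)f^\varepsilon_n}$: its gradient is in $L^2(\Upsilon_T)$ (sum of the first two square-root terms of \eqref{eq:main est gal i}), its $L^\infty_\theta L^1$-type norm is controlled since $(1-\rho^\varepsilon_n)f^\varepsilon_n\le f^\varepsilon_n$, and a parabolic Gagliardo–Nirenberg / interpolation in three spatial-angular dimensions yields the exponent $10/3$. Finally \eqref{eq:bound on f sqrt 1-rho main one for limit}: write $\nabla_{\bxi}(f^\varepsilon_n\sqrt{1-\rho^\varepsilon_n})$ in terms of $\sqrt{f^\varepsilon_n}$, $\sqrt{1-\rho^\varepsilon_n}\nabla\sqrt{f^\varepsilon_n}$, $\nabla\sqrt{1-\rho^\varepsilon_n}$, $\partial_\theta\sqrt{f^\varepsilon_n}$, and estimate by Hölder with exponents $6,3,2$ (exactly the pairings displayed in \eqref{eq:weak form is well defined in intro}), using \eqref{eq:sqrt func di ben} and \eqref{eq:main est gal i}. \textbf{The main obstacle} is the bookkeeping in the completing-the-square step: one must carefully split the cross-diffusion dissipation $\int(1-\rho^\varepsilon_n)f^\varepsilon_n|\nabla u^\varepsilon_n|^2$ into precisely the nonnegative contributions $\int(1-\rho^\varepsilon_n)|\nabla\sqrt{f^\varepsilon_n}|^2$, $\int|\nabla\sqrt{1-\rho^\varepsilon_n}|^2$ and $\int|\nabla\rho^\varepsilon_n|^2$ listed in \eqref{eq:main est gal i}, verifying that no sign is lost — this is the heart of the boundedness-by-entropy method and where the structure of the nonlocal mobility really matters.
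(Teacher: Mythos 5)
Your proposal follows essentially the same route as the paper's proof: test the Galerkin equation with $u^\varepsilon_n$, use \eqref{eq:ent diss rel}, absorb the drift by Young's inequality (bounding $\tilde M(u^\varepsilon_n)V\cdot V$ by a multiple of $f^\varepsilon_n$ and then by the mass bound \eqref{eq:unif L1 on f eps n}), split the dissipation into the square-root terms, and then derive \eqref{eq:sqrt func di ben}--\eqref{eq:bound on f sqrt 1-rho main one for limit} via the fibered $\theta$-interpolation of Lemma \ref{lem:our dibenedetto}, the classical $L^\infty L^2\cap L^2H^1\hookrightarrow L^{10/3}$ interpolation, and H\"older pairings, exactly as in \S\ref{sec:unif est}. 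The one step you flag but do not carry out resolves as you expect: writing $\nabla u^\varepsilon_n=\nabla f^\varepsilon_n/f^\varepsilon_n+\nabla\rho^\varepsilon_n/(1-\rho^\varepsilon_n)$ gives $(1-\rho^\varepsilon_n)f^\varepsilon_n|\nabla u^\varepsilon_n|^2=4(1-\rho^\varepsilon_n)|\nabla\sqrt{f^\varepsilon_n}|^2+2\nabla f^\varepsilon_n\cdot\nabla\rho^\varepsilon_n+4f^\varepsilon_n|\nabla\sqrt{1-\rho^\varepsilon_n}|^2$, where the cross term (which has no pointwise sign) integrates in $\theta$ by Fubini to $2|\nabla\rho^\varepsilon_n|^2\geq 0$ and the last term integrates to $4\rho^\varepsilon_n|\nabla\sqrt{1-\rho^\varepsilon_n}|^2$, after which the identity $|\nabla\sqrt{1-\rho^\varepsilon_n}|^2=\tfrac14|\nabla\rho^\varepsilon_n|^2+\rho^\varepsilon_n|\nabla\sqrt{1-\rho^\varepsilon_n}|^2$ yields the unweighted term in \eqref{eq:main est gal i}; note also that controlling the initial entropy $E[f^\varepsilon_n](0)$ uses the Morrey embedding $H^2(\Upsilon)\hookrightarrow L^\infty(\Upsilon)$ together with dominated convergence (not merely $L^2$ convergence of $u^\varepsilon_n(0)$), since $f^\varepsilon_n(0)$ depends nonlinearly on $u^\varepsilon_n(0)$.
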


\begin{proof}
\noindent 1. \textit{Entropy dissipation}: By testing \eqref{eq:galerkin regularised} with $u^\varepsilon_n$, which is an admissible test function since one can directly verify $u^{\varepsilon}_n, \partial_t u^\varepsilon_n \in \mathcal{V}_n$, we compute, using \eqref{eq:ent diss rel}, the entropy dissipation 
\begin{equation*}
    \begin{aligned}
        \frac{\varepsilon}{2} \frac{\de}{\de t} \int_\Upsilon |u^\varepsilon_n|^2 \d x \d \theta + \frac{\de}{\de t} E[f^\varepsilon_n] + \int_\Upsilon (\varepsilon I + \tilde{M}(u^\varepsilon_n) )\nabla_{\bxi} u^\varepsilon_n \cdot \nabla_{\bxi} u^\varepsilon_n \d \bxi = - \int_\Upsilon \tilde{M}(u^\varepsilon_n) V \cdot \nabla_{\bxi} u^\varepsilon_n \d \bxi. 
        \end{aligned}
\end{equation*}
Again, we emphasise that, since $\varphi_j \in C^\infty_\per(\overline{\Upsilon})$ for all $j$, the integrands in the previous line are composed of functions belonging to $C^1(\overline{\Upsilon}_T)$, whence the integrals are well-defined. Recall that the  matrix $\tilde{M}$ is positive semi-definite and symmetric. It follows that there exists a unique square-root matrix $\tilde{N}$ of the same dimension such that $\tilde{N}^T \tilde{N} = \tilde{M}$, namely 
\begin{equation*}
    \tilde{N}(u) = N(f[u]) = \left( \begin{matrix}
        \sqrt{D_e f(1-\rho)}I & 0 \\ 
        0 & \sqrt{f}
    \end{matrix} \right). 
\end{equation*}
In turn, there holds, for all $t \in [0,T]$, 
\begin{equation}\label{eq:penult step main dissip galerkin}
    \begin{aligned}
      E[f^\varepsilon_n](t) + \frac{\varepsilon}{2}  \int_\Upsilon &|u^\varepsilon_n(t)|^2 \d \bxi + \varepsilon \int_0^t \int_\Upsilon |\nabla_{\bxi} u^\varepsilon_n|^2  \d \bxi \d \tau + \int_0^t \int_\Upsilon \underbrace{\tilde{M}(u^\varepsilon_n) \nabla_{\bxi} u^\varepsilon_n \cdot \nabla_{\bxi} u^\varepsilon_n}_{\geq 0} \d \bxi \d \tau  \\ 
        &= E[f^\varepsilon_n](0) + \frac{\varepsilon}{2} \int_\Upsilon |u^\varepsilon_n(0)|^2 \d \bxi - \int_0^t \int_\Upsilon \tilde{N}(u^\varepsilon_n) \nabla_{\bxi} u^\varepsilon_n \cdot\tilde{N}(u^\varepsilon_n) V \d \bxi \d \tau, 
        \end{aligned}
\end{equation}
and observe that the final term on the right-hand side may be bounded by means of the Young inequality as follows 
\begin{equation}\label{eq:cauchy young penult galerkin}
   \begin{aligned}
       |\tilde{N}(u^\varepsilon_n) \nabla_{\bxi} u^\varepsilon_n \cdot\tilde{N}(u^\varepsilon_n) V| \leq& \frac{1}{2}|\tilde{N}(u^\varepsilon_n)\nabla_{\bxi} u^\varepsilon_n|^2 + \frac{1}{2}|\tilde{N}(u^\varepsilon_n) V|^2 \\ 
       =& \frac{1}{2}\tilde{M}(u^\varepsilon_n)\nabla_{\bxi} u^\varepsilon_n \cdot \nabla_{\bxi} u^\varepsilon_n + \frac{1}{2}\tilde{M}(u^\varepsilon_n)V \cdot V \\ 
       \leq & \frac{1}{2}\tilde{M}(u^\varepsilon_n)\nabla_{\bxi} u^\varepsilon_n \cdot \nabla_{\bxi} u^\varepsilon_n + \frac{1}{2} \Vert V \Vert_{L^\infty(\Upsilon_T)}^2|f^\varepsilon_n|, 
   \end{aligned} 
\end{equation}
where we used the boundedness of $\rho^\varepsilon_n$; \textit{cf.}~\eqref{eq:unif L1 on f eps n}. Recall from \eqref{eq:strong convergence initial data galerkin} that $u^\varepsilon_n(0) \to u^\varepsilon_0$ strongly in $H^2_\per(\Upsilon)$, and hence almost everywhere for a subsequence; which, with slight abuse of notation, we still label as $\{u^\varepsilon_n\}_{n}$. In view of the Morrey embedding $H^2(\Upsilon) \hookrightarrow C^{0,1/2}(\Upsilon)$, we deduce 
\begin{equation*}
    \Vert u^\varepsilon_n(0) - u^\varepsilon_0 \Vert_{L^\infty(\Upsilon)} \to 0, 
\end{equation*}
and hence $$\sup_n \Vert u^\varepsilon_n(0) \Vert_{L^\infty(\Upsilon)} < \infty.$$ 
It follows from the continuity of the exponential and the relations \eqref{eq:f eps n in terms of u eps n pre galerkin} that $f^\varepsilon_n(0) \to f^\varepsilon_0$ almost everywhere, and 
\begin{equation*}
    0 < f^\varepsilon_n(0) \leq \exp \big( \sup_n {\Vert u^\varepsilon_n(0) \Vert_{L^\infty(\Upsilon)}}\big) < \infty, 
\end{equation*}
and we remark that $ \exp(\sup_n{\Vert u^\varepsilon_n(0) \Vert_{L^\infty(\Upsilon)}})$ is constant and is therefore an integrable dominating function on the bounded domain $\Upsilon$. Using the previous estimate, an application of the Dominated Convergence Theorem yields, for all $q \in [1,\infty)$, 
\begin{equation}\label{eq:strong conv in Lq for initial f eps n}
    f^\varepsilon_n(0) \to f^\varepsilon_0 \quad \text{strongly in }L^q(\Upsilon) \text{ as } n \to \infty, 
\end{equation}
and a similar argument using also \eqref{eq:unif L1 on f eps n}, \textit{cf.}~\textit{e.g.}~Appendix \ref{sec:appendix initial data}, yields $\lim_{n \to \infty }E[f^\varepsilon_n(0)] = E[f^\varepsilon_0]$ in the sense of real numbers. It therefore follows, using also \eqref{eq:unif L1 on f eps n}, \eqref{eq:penult step main dissip galerkin}, and \eqref{eq:cauchy young penult galerkin}, that 
\begin{equation*}
    \begin{aligned}
      \sup_{t \in [0,T]} E[f^\varepsilon_n](t) +& \frac{\varepsilon}{2}  \bigg( \sup_{t \in [0,T]}\int_\Upsilon |u^\varepsilon_n(t)|^2 \d \bxi + \int_{\Upsilon_T} |\nabla_{\bxi} u^\varepsilon_n|^2 \d \bxi \d t \bigg) \\ 
      +& \frac{1}{2}\int_{\Upsilon_T} \tilde{M}(u^\varepsilon_n) \nabla_{\bxi} u^\varepsilon_n \cdot \nabla_{\bxi} u^\varepsilon_n \d \bxi \d t  \leq C \bigg( 1+ E[f^\varepsilon_0] + \varepsilon\int_\Upsilon |u^\varepsilon_0|^2 \d\bxi \bigg), 
        \end{aligned}
\end{equation*}
for some positive constant $C=C(\pe,D_e,T,\Upsilon)$ independent of $\varepsilon,n$. Our choice of regularised initial data (\textit{cf.}~Lemma \ref{lem:regularised initial data}) implies that the entire right-hand side of the above is bounded independently of $\varepsilon,n$. By rewriting the final term on the left-hand side in terms of $f^\varepsilon_n,\rho^\varepsilon_n$ using \eqref{eq:convenient expansion}, we get 
\begin{equation*}
    \begin{aligned}
      \sup_{t \in [0,T]} E[f^\varepsilon_n](t) +& \frac{\varepsilon}{2}  \bigg( \sup_{t \in [0,T]}\int_\Upsilon |u^\varepsilon_n(t)|^2 \d \bxi + \int_{\Upsilon_T} |\nabla_{\bxi} u^\varepsilon_n|^2 \d \bxi \d t \bigg) \\ 
      +& \frac{1}{2}\int_{\Upsilon_T} \Big( D_e f^\varepsilon_n(1-\rho^\varepsilon_n) \big|\nabla \log\big( \frac{f^\varepsilon_n}{1-\rho^\varepsilon_n}\big)\big|^2 + f^\varepsilon_n \big|\partial_\theta \log\big( \frac{f^\varepsilon_n}{1-\rho^\varepsilon_n}\big)\big|^2  \Big) \d \bxi \d t  \leq C, 
        \end{aligned}
\end{equation*}
\textit{i.e.}, 
\begin{equation}\label{eq:unif in n est for galerkin}
    \begin{aligned}
      \sup_{t \in [0,T]} E[f^\varepsilon_n](t) +  \sup_{t \in [0,T]}\int_\Upsilon |\sqrt{\varepsilon} u^\varepsilon_n(t)|^2 \d \bxi + \int_{\Upsilon_T} |\nabla_{\bxi} \sqrt{\varepsilon} u^\varepsilon_n|^2 \d \bxi \d t& \\ 
      + \int_{\Upsilon_T} (1-\rho^\varepsilon_n)|\nabla \sqrt{f^\varepsilon_n} |^2  \d \bxi \d t + \int_{\Omega_T} \rho^\varepsilon_n |\nabla \sqrt{1-\rho^\varepsilon_n}|^2 \d x \d t& \\ 
      + \int_{\Upsilon_T} |\partial_\theta  \sqrt{f^\varepsilon_n}|^2 \d \bxi \d t + \int_{\Omega_T} |\nabla \rho^\varepsilon_n|^2  \d x \d t 
      &\leq C, 
        \end{aligned}
\end{equation}
where we used the Fubini--Tonelli Theorem and the commutativity of the derivative in $x$ and integral in $\theta$ to rewrite $\int_{\Upsilon_T} \nabla f^\varepsilon_n \cdot \nabla \rho^\varepsilon_n \d \bxi \d t = \int_{\Omega_T} |\nabla \rho^\varepsilon_n|^2  \d x \d t$ for the final term on the left-hand side of \eqref{eq:unif in n est for galerkin}. Observe that, using the bounds in the previous estimate, 
\begin{equation*}
    \begin{aligned}
        \int_{\Omega_T} |\nabla \sqrt{1-\rho^\varepsilon_n}|^2 \d x \d t 
        = \frac{1}{4}\int_{\Omega_T} |\nabla \rho^\varepsilon_n|^2 \d x \d t + \int_{\Omega_T} \rho^\varepsilon_n |\nabla \sqrt{1-\rho^\varepsilon_n}|^2\d x \d t \leq C, 
    \end{aligned}
\end{equation*}
and similarly 
\begin{equation}\label{eq:partial bound i}
    \begin{aligned}
        \int_{\Upsilon_T} (1-\rho^\varepsilon_n)|\nabla_{\bxi} \sqrt{f^\varepsilon_n} |^2  \d \bxi \d t \leq \int_{\Upsilon_T} (1-\rho^\varepsilon_n)|\nabla \sqrt{f^\varepsilon_n} |^2  \d \bxi \d t + \int_{\Upsilon_T} |\partial_\theta \sqrt{f^\varepsilon_n} |^2  \d \bxi \d t \leq C. 
    \end{aligned}
\end{equation}
Collating these latter two bounds with \eqref{eq:unif in n est for galerkin} yields the estimate \eqref{eq:main est gal i}. 

\noindent 2. \textit{Interpolated bounds on $\sqrt{f}$ and $\sqrt{(1-\rho)f}$}: Using the product rule and the estimates \eqref{eq:partial bound i} and \eqref{eq:unif in n est for galerkin}, we find 
\begin{equation*}
  \begin{aligned}
      \int_{\Upsilon_T} |\nabla_{\bxi} \sqrt{(1-\rho^\varepsilon_n) f^\varepsilon_n} |^2  \d \bxi \d t     &\leq C\bigg(  \int_{\Upsilon_T} (1-\rho^\varepsilon_n)|\nabla_{\bxi} \sqrt{f^\varepsilon_n} |^2  \d \bxi \d t + \int_{\Omega_T} \rho^\varepsilon_n |\nabla \sqrt{1-\rho^\varepsilon_n}|^2 \d x \d t  \bigg) \\ 
      &\leq C. 
  \end{aligned} 
\end{equation*}
Meanwhile, the estimate \eqref{eq:unif L1 on f eps n} implies $\Vert \sqrt{(1-\rho^\varepsilon_n)f^\varepsilon_n}\Vert_{L^\infty(0,T;L^2(\Upsilon))} \leq C$, whence we deduce 
\begin{equation*}
    \Vert \sqrt{(1-\rho^\varepsilon_n)f^\varepsilon_n}\Vert_{L^\infty(0,T;L^2(\Upsilon))} + \Vert \sqrt{(1-\rho^\varepsilon_n)f^\varepsilon_n}\Vert_{L^2(0,T;H^1(\Upsilon))} \leq C. 
\end{equation*}
In turn, the classical interpolation lemma \cite[\S 1, Proposition 3.2]{DiBenedetto} implies 
\begin{equation*}
    \Vert \sqrt{(1-\rho^\varepsilon_n)f^\varepsilon_n} \Vert_{L^{\frac{10}{3}}(\Upsilon_T)} \leq C, 
\end{equation*}
and thus \eqref{eq:gradient of f sqrt 1- rho} is proved.

Our objective is then to make use of the \emph{linear} diffusion in angle to upgrade the integrability of $f^\varepsilon_n$; to do so, we use an adapted version of the aforementioned interpolation lemma to the space $L^\infty(\Omega_T;L^2(0,2\pi)) \cap L^2(\Omega_T;H^1(0,2\pi))$, \textit{cf.}~Appendix \ref{sec:appendix interpolation}. Using both \eqref{eq:unif L1 on f eps n} and the estimate on $\partial_\theta \sqrt{f^\varepsilon_n}$ from \eqref{eq:unif in n est for galerkin}, we remark that $$\Vert \sqrt{f^\varepsilon_n} \Vert_{L^\infty(\Omega_T;L^2(0,2\pi))} + \Vert \sqrt{f^\varepsilon_n} \Vert_{L^2(\Omega_T;H^1(0,2\pi))} \leq C.$$ A direct application of Lemma \ref{lem:our dibenedetto} then implies 
\begin{equation*}
    \Vert \sqrt{f^\varepsilon_n} \Vert_{L^6(\Upsilon_T)} \leq C, 
\end{equation*}
and \eqref{eq:sqrt func di ben} follows.

\noindent 3. \textit{Estimate on $f\sqrt{1-\rho}$}: We write 
\begin{equation*}
    \begin{aligned}
        \sqrt{1-\rho^\varepsilon_n} \nabla_{\bxi} f^\varepsilon_n = 2\sqrt{f^\varepsilon_n} \sqrt{1-\rho^\varepsilon_n} \nabla_{\bxi} \sqrt{f^\varepsilon_n},  
    \end{aligned}
\end{equation*}
whence H\"older's inequality yields 
\begin{equation*}
    \Vert \sqrt{1-\rho^\varepsilon_n} \nabla_{\bxi} f^\varepsilon_n \Vert_{L^{\frac{3}{2}}(\Upsilon_T)} \leq 2 \Vert \sqrt{f^\varepsilon_n} \Vert_{L^6(\Upsilon_T)} \Vert \sqrt{1-\rho^\varepsilon_n}\nabla_{\bxi} \sqrt{f^\varepsilon_n} \Vert_{L^2(\Upsilon_T)} \leq C, 
\end{equation*}
where we used \eqref{eq:main est gal i} and \eqref{eq:sqrt func di ben}. In turn, using the H\"older, Minkowski, and Jensen inequalities, we get 
\begin{equation*}
    \begin{aligned}
        \Vert \nabla_{\bxi}\big( f^\varepsilon_n \sqrt{1-\rho^\varepsilon_n} \big) \Vert_{L^{\frac{6}{5}}(\Upsilon_T)} &\leq \Vert  \sqrt{1-\rho^\varepsilon_n} \nabla_{\bxi} f^\varepsilon_n \Vert_{L^{\frac{6}{5}}(\Upsilon_T)} + \Vert f^\varepsilon_n \nabla \sqrt{1-\rho^\varepsilon_n} \Vert_{L^{\frac{6}{5}}(\Upsilon_T)} \\ 
        &\leq C\Vert \sqrt{1-\rho^\varepsilon_n} \nabla_{\bxi} f^\varepsilon_n \Vert_{L^{\frac{3}{2}}(\Upsilon_T)} + \Vert f^\varepsilon_n \Vert_{L^3(\Upsilon_T)} \Vert \nabla \sqrt{1-\rho^\varepsilon_n}\Vert_{L^2(\Omega_T)} \\ 
        &\leq C, 
    \end{aligned}
\end{equation*}
where we again used \eqref{eq:main est gal i} and \eqref{eq:sqrt func di ben}. Therefore, using again Jensen's inequality and \eqref{eq:sqrt func di ben} to bound $\Vert f^\varepsilon_n \sqrt{1-\rho^\varepsilon_n}\Vert_{L^{\frac{6}{5}}(\Upsilon_T)}$ uniformly in $n$ and $\varepsilon$, we obtain the estimate \eqref{eq:gradient of f sqrt 1- rho}. 
\end{proof}

In what follows, we estimate the time derivatives of the transformed variable and original unknowns; these bounds will then permit us to apply the Aubin--Lions Lemma in the proof of Theorem \ref{thm:main existence result}, \textit{cf.}~\S \ref{sec:proof of existence result}. To this end, we recall the angle-independent integrated entropy variable 
\begin{equation*}
    U^\varepsilon_n(t,x) = \int_0^{2\pi} u^\varepsilon_n(t,x,\theta) \d \theta, 
\end{equation*}
which was defined at the end of the statement of Proposition \ref{prop:transformed reg sys existence and est}. Observe that, by virtue of Jensen's inequality, the entropy estimate \eqref{eq:main est gal i} yields 
\begin{equation}\label{eq:est on U eps n}
    \sup_{t \in [0,T]} \int_\Omega |\sqrt{\varepsilon}U^\varepsilon_n(t)|^2 \d x + \int_{\Omega_T} |\nabla \sqrt{\varepsilon} U^\varepsilon_n|^2 \d x \d t \leq C. 
\end{equation}
Furthermore, integrating \eqref{eq:galerkin reg better} with respect to the angle variable, yields the evolution equation on the angle-independent quantities: 
\begin{equation}\label{eq:rho eqn in galerkin}
    \partial_t(\varepsilon U^\varepsilon_n + \rho^\varepsilon_n) + \pe \dv((1-\rho^\varepsilon_n)\p^\varepsilon_n) = \varepsilon \Delta U^\varepsilon_n + D_e \Delta \rho^\varepsilon_n 
\end{equation}
in the weak sense, where 
\begin{equation*}
    \p^\varepsilon_n(t,x) = \int_0^{2\pi} f^\varepsilon_n(t,x,\theta) \e(\theta) \d \theta 
\end{equation*}
satisfies, using the non-negativity of $f^\varepsilon_n$ and the boundedness of $\rho^\varepsilon_n$, the uniform estimate $$\Vert \p^\varepsilon_n \Vert_{L^\infty(\Omega_T)} \leq 1.$$

\begin{cor}[Uniform estimates on time derivatives]\label{cor:time deriv est gal}
  Let $\varepsilon,n,u^\varepsilon_n$ be as in Lemma \ref{lem:existence of galerkin coeffs} and $f^\varepsilon_n,\rho^\varepsilon_n$ be as in \eqref{eq:f eps n in terms of u eps n pre galerkin}. There exists a positive constant $C$, independent of $\varepsilon,n$, such that there holds the uniform estimates: 
    \begin{align}
        \Vert \partial_t(\varepsilon u^\varepsilon_n + f^\varepsilon_n) \Vert_{X'} \leq C, \label{eq:main time deriv i} \\ 
        \Vert \partial_t(\varepsilon U^\varepsilon_n + \rho^\varepsilon_n ) \Vert_{Y'} \leq C. \label{eq:main time deriv ii}
    \end{align}
\end{cor}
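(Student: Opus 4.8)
The plan is to test the weak form against admissible test functions and bound the resulting terms using the uniform estimates of Lemma~\ref{lem:unif est for galerkin}. For \eqref{eq:main time deriv i}, I would take $\psi \in X = L^6(0,T;W^{1,6}_\per(\Upsilon))$ and use the weak formulation \eqref{eq:weak form reg sys}, so that $\langle \partial_t(\varepsilon u^\varepsilon_n + f^\varepsilon_n),\psi\rangle$ equals the sum of the flux terms. Each of the four terms on the right is estimated by H\"older: the regularisation flux $\varepsilon \nabla_{\bxi} u^\varepsilon_n \cdot \nabla_{\bxi}\psi$ is controlled by $\Vert \sqrt{\varepsilon}\nabla_{\bxi} u^\varepsilon_n\Vert_{L^2}\Vert \sqrt{\varepsilon}\nabla_{\bxi}\psi\Vert_{L^2} \le C\Vert \psi\Vert_X$ using $\varepsilon<1$ and \eqref{eq:main est gal i}; the diffusion flux $(1-\rho^\varepsilon_n)\nabla f^\varepsilon_n + f^\varepsilon_n\nabla\rho^\varepsilon_n$ is split as in \eqref{eq:weak form is well defined in intro}, namely $2\sqrt{1-\rho^\varepsilon_n}\sqrt{f^\varepsilon_n}\cdot\sqrt{1-\rho^\varepsilon_n}\nabla_{\bxi}\sqrt{f^\varepsilon_n}$ (which lies in $L^{6/5}$ by $L^6\cdot L^\infty \cdot L^2$) plus $-2f^\varepsilon_n\sqrt{1-\rho^\varepsilon_n}\nabla\sqrt{1-\rho^\varepsilon_n}$ (in $L^{6/5}$ by $L^3\cdot L^\infty\cdot L^2$); the angular flux $\partial_\theta f^\varepsilon_n\partial_\theta\psi = 2\sqrt{f^\varepsilon_n}\partial_\theta\sqrt{f^\varepsilon_n}\partial_\theta\psi$ is in $L^{6/5}$ by $L^6\cdot L^2\cdot L^6$; and the drift flux $(1-\rho^\varepsilon_n)f^\varepsilon_n\e(\theta)\cdot\nabla\psi$ is bounded using $|(1-\rho^\varepsilon_n)f^\varepsilon_n|\le f^\varepsilon_n \in L^3$ and $\nabla\psi \in L^6 \subset L^{3/2}$. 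Since $L^{6/5} = (L^6)'$, all four give a bound of the form $C\Vert\psi\Vert_X$, and taking the supremum over $\Vert\psi\Vert_X \le 1$ yields \eqref{eq:main time deriv i}.

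For \eqref{eq:main time deriv ii}, I would instead work with the angle-integrated equation \eqref{eq:rho eqn in galerkin}: testing against $\psi \in Y = L^2(0,T;H^1_\per(\Omega))$, one has $\langle\partial_t(\varepsilon U^\varepsilon_n+\rho^\varepsilon_n),\psi\rangle = \int_{\Omega_T}\big(-\varepsilon\nabla U^\varepsilon_n\cdot\nabla\psi - D_e\nabla\rho^\varepsilon_n\cdot\nabla\psi + \pe(1-\rho^\varepsilon_n)\p^\varepsilon_n\cdot\nabla\psi\big)\,dx\,dt$. The first term is bounded by $\Vert\sqrt{\varepsilon}\nabla U^\varepsilon_n\Vert_{L^2(\Omega_T)}\Vert\nabla\psi\Vert_{L^2(\Omega_T)}$ via \eqref{eq:est on U eps n}; the second by $D_e\Vert\nabla\rho^\varepsilon_n\Vert_{L^2(\Omega_T)}\Vert\nabla\psi\Vert_{L^2(\Omega_T)}$ via \eqref{eq:main est gal i}; the drift term by $|\pe|\,\Vert(1-\rho^\varepsilon_n)\p^\varepsilon_n\Vert_{L^\infty(\Omega_T)}\Vert\nabla\psi\Vert_{L^1(\Omega_T)} \le C\Vert\psi\Vert_Y$ using $\Vert\p^\varepsilon_n\Vert_{L^\infty}\le 1$, $0\le\rho^\varepsilon_n\le1$, and the finiteness of $|\Upsilon_T|$. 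Hence the duality pairing is bounded by $C\Vert\psi\Vert_Y$ uniformly in $\varepsilon,n$, which is \eqref{eq:main time deriv ii}.

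One subtlety worth recording: the pairing $\langle\partial_t(\varepsilon U^\varepsilon_n+\rho^\varepsilon_n),\psi\rangle$ must first be justified, \emph{i.e.}~one should note that \eqref{eq:rho eqn in galerkin} indeed holds in the sense of $Y'$-valued distributions in time, which follows by choosing $\psi$ in \eqref{eq:weak form reg sys} independent of $\theta$ (so $\partial_\theta\psi=0$), integrating, and identifying $\int_0^{2\pi}\nabla f^\varepsilon_n\,d\theta = \nabla\rho^\varepsilon_n$ and $\int_0^{2\pi}(1-\rho^\varepsilon_n)f^\varepsilon_n\e(\theta)\,d\theta = (1-\rho^\varepsilon_n)\p^\varepsilon_n$; the regularisation and pure-$\rho$ diffusion terms combine into $\varepsilon\Delta U^\varepsilon_n + D_e\Delta\rho^\varepsilon_n$ because $\int_0^{2\pi}\partial_\theta^2 f^\varepsilon_n\,d\theta = 0$ by periodicity. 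The main (minor) obstacle is purely bookkeeping: making sure every flux term is assigned to the correct dual exponent — in particular that $6/5$ and $3/2$ are the right Lebesgue exponents so that the products pair against $\nabla\psi\in L^6$, and that the $\varepsilon$-weights are distributed symmetrically ($\sqrt{\varepsilon}$ on each factor) so the bound survives $\varepsilon\to0$. No genuinely new estimate beyond Lemma~\ref{lem:unif est for galerkin} and the bounds $\Vert\p^\varepsilon_n\Vert_{L^\infty}\le1$, $\Vert f^\varepsilon_n\Vert_{L^1_\theta}\le1$ is needed.
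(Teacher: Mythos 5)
Your proposal is correct and follows essentially the same route as the paper: test the weak formulation (for $f$) and the angle-integrated equation \eqref{eq:rho eqn in galerkin} (for $\rho$) against suitable test functions and apply H\"older together with the uniform bounds of Lemma \ref{lem:unif est for galerkin}, the bound $\Vert \p^\varepsilon_n\Vert_{L^\infty}\leq 1$, and the $\sqrt{\varepsilon}$-splitting of the regularisation term. The only (harmless) deviation is in the spatial diffusion flux, where you use the splitting \eqref{eq:weak form is well defined in intro} with $\sqrt{f^\varepsilon_n}\in L^6$ and $\sqrt{1-\rho^\varepsilon_n}\nabla\sqrt{f^\varepsilon_n}\in L^2$, whereas the paper writes $(1-\rho^\varepsilon_n)\nabla f^\varepsilon_n+f^\varepsilon_n\nabla\rho^\varepsilon_n=2\sqrt{(1-\rho^\varepsilon_n)f^\varepsilon_n}\,\nabla\sqrt{(1-\rho^\varepsilon_n)f^\varepsilon_n}+2f^\varepsilon_n\nabla\rho^\varepsilon_n$ and invokes the interpolated $L^{10/3}$ bound \eqref{eq:gradient of f sqrt 1- rho}; both yield the same $C\Vert\nabla_{\bxi}\psi\Vert_{L^6(\Upsilon_T)}$ control.
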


\begin{proof}

In view of the estimates of Lemma \ref{lem:unif est for galerkin}, a standard density argument shows that the weak formulation \eqref{eq:galerkin regularised weak form}, \textit{cf.}~\eqref{eq:galerkin reg better}, generalises to the space of test functions $\mathcal{A}$ defined in \eqref{eq:admissible test functions}, \textit{i.e.}, for all $\psi \in \mathcal{A}$, there holds 
\begin{equation}\label{eq:galerkin regularised weak form for time deriv est}
    \begin{aligned}
  - \langle \partial_t (\varepsilon u^\varepsilon_n &+ f^\varepsilon_n) , \psi \rangle + \pe\int_{\Upsilon_T} (1-\rho^\varepsilon_n)f^\varepsilon_n \e(\theta) \cdot \nabla \psi \d \bxi \d t \\ 
      =&\int_{\Upsilon_T} \Big( \varepsilon \nabla_{\bxi}u^\varepsilon_n \cdot \nabla_{\bxi}\psi + D_e \big((1-\rho^\varepsilon_n)\nabla f^\varepsilon_n + f^\varepsilon_n \nabla \rho^\varepsilon_n \big) \cdot \nabla \psi + \partial_\theta f^\varepsilon_n \partial_\theta \psi \Big) \d \bxi \d t. 
    \end{aligned}
\end{equation}

\noindent 1. \textit{Estimate on $\partial_t(\varepsilon u + f)$}: The weak formulation \eqref{eq:galerkin regularised weak form for time deriv est} implies that, for all $\psi \in \mathcal{A}$, there holds 
\begin{equation}\label{eq:first exp for time deriv est in galerkin}
  \begin{aligned}
     |\langle \partial_t(\varepsilon u^\varepsilon_n + f^\varepsilon_n), \psi \rangle| \leq& D_e \bigg| \int_{\Upsilon_T} (1-\rho^\varepsilon_n) \nabla f^\varepsilon_n \cdot \nabla \psi \d \bxi \d t \bigg| + D_e \bigg| \int_{\Upsilon_T} f^\varepsilon_n \nabla \rho^\varepsilon_n \cdot \nabla \psi \d \bxi \d t \bigg| \\ 
     &+ \sqrt{\varepsilon} \bigg| \int_{\Upsilon_T} \nabla_{\bxi}\sqrt{\varepsilon}u^\varepsilon_n \cdot \nabla_{\bxi}\psi \d \bxi \d t \bigg| + \bigg|\int_{\Upsilon_T} \partial_\theta f^\varepsilon_n \partial_\theta \psi \d \bxi \d t \bigg| \\ 
     &+ \pe \bigg| \int_{\Upsilon_T}  (1-\rho^\varepsilon_n)f^\varepsilon_n \e(\theta) \cdot \nabla \psi \d \bxi \d t \bigg| \\ 
     =:& \sum_{j=1}^5 I_j. 
  \end{aligned} 
\end{equation}

Using the estimates of Lemma \ref{lem:unif est for galerkin}, we bound each term on the right-hand side of \eqref{eq:first exp for time deriv est in galerkin} individually. To begin with, H\"older's inequality yields 
\begin{equation*}
    I_2 \leq \Vert f^\varepsilon_n \Vert_{L^3(\Upsilon_T)} \Vert \nabla \rho^\varepsilon_n \Vert_{L^2(\Upsilon_T)} \Vert \nabla_{\bxi} \psi \Vert_{L^{6}(\Upsilon_T)} \leq C\Vert \nabla_{\bxi} \psi \Vert_{L^{6}(\Upsilon_T)}. 
\end{equation*}
Next, we use the second relation in \eqref{eq:convenient expansion}, the previous bound, and Jensen's inequality to write 
\begin{equation*}
   \begin{aligned}
       I_1 \leq  C\Big(\Vert \sqrt{(1-\rho^\varepsilon_n)f^\varepsilon_n} \Vert_{L^{\frac{10}{3}}(\Upsilon_T)} \Vert \nabla_{\bxi} \sqrt{(1-\rho^\varepsilon_n)f^\varepsilon_n} \Vert_{L^{2}(\Upsilon_T)} \Vert \nabla_{\bxi} \psi \Vert_{L^{5}(\Upsilon_T)} \!+ \!I_2 \Big)\! \leq C \Vert \nabla_{\bxi} \psi \Vert_{L^{6}(\Upsilon_T)}. 
   \end{aligned} 
\end{equation*}
Similarly, using the first relation in \eqref{eq:convenient expansion}, 
\begin{equation*}
   \begin{aligned}
      I_4 \leq \Vert \sqrt{f^\varepsilon_n} \Vert_{L^6(\Upsilon_T)} \Vert \partial_\theta \sqrt{f^\varepsilon_n} \Vert_{L^2(\Upsilon_T)} \Vert \partial_\theta \psi \Vert_{L^{3}(\Upsilon_T)} \leq C \Vert \partial_\theta \psi \Vert_{L^{3}(\Upsilon_T)}. 
   \end{aligned} 
\end{equation*}
For the third term, we have 
\begin{equation*}
    I_3 \leq \sqrt{\varepsilon} \Vert \nabla_{\bxi} \sqrt{\varepsilon} u^\varepsilon_n \Vert_{L^2(\Upsilon_T)} \Vert \nabla_{\bxi} \psi \Vert_{L^2(\Upsilon)} \leq C \sqrt{\varepsilon}\Vert \nabla_{\bxi} \psi \Vert_{L^2(\Upsilon)}. 
\end{equation*}
For the final integral, the uniform boundedness of $\rho^\varepsilon_n$ and Jensen's inequality implies 
\begin{equation*}
    \begin{aligned}
        I_5 \leq & \Vert f^\varepsilon_n \Vert_{L^{3}(\Upsilon_T)} \Vert \nabla_{\bxi} \psi \Vert_{L^{\frac{3}{2}}(\Upsilon_T)} \leq C \Vert \nabla_{\bxi} \psi \Vert_{L^{\frac{3}{2}}(\Upsilon_T)}. 
    \end{aligned}
\end{equation*}
By combining the previous bounds and using Jensen's inequality, we deduce 
\begin{equation*}
    |\langle \partial_t(\varepsilon u^\varepsilon_n + f^\varepsilon_n) , \psi \rangle| \leq C \Vert \nabla_{\bxi} \psi \Vert_{L^6(\Upsilon_T)}, 
\end{equation*}
and \eqref{eq:main time deriv i} follows. 

\noindent 2. \textit{Estimate on $\partial_t (\varepsilon U + \rho)$}: Testing \eqref{eq:rho eqn in galerkin} against $\psi \in \mathcal{A}_s$, we obtain 
\begin{equation*}
    \begin{aligned}
        \langle \partial_t(\varepsilon U^\varepsilon_n + \rho^\varepsilon_n) , \psi \rangle = &\pe\int_{\Omega_T} (1-\rho^\varepsilon_n) \p^\varepsilon_n \cdot \nabla \psi \d x \d t \\ 
        &- \sqrt{\varepsilon} \int_{\Omega_T} \nabla \sqrt{\varepsilon}U^\varepsilon_n \cdot \nabla \psi \d x \d t - D_e \int_{\Omega_T} \nabla \rho^\varepsilon_n \cdot \nabla \psi \d x \d t, 
    \end{aligned}
\end{equation*}
from which we deduce, using \eqref{eq:est on U eps n}, the uniform boundedness of $\p^\varepsilon_n$, and Jensen's inequality, 
\begin{equation*}
    \begin{aligned}
        |\langle \partial_t(\varepsilon U^\varepsilon_n &+ \rho^\varepsilon_n) , \psi \rangle| \\ 
        \leq & C \Big( \Vert \nabla \psi \Vert_{L^1(\Omega_T)} + \sqrt{\varepsilon} \Vert \nabla \sqrt{\varepsilon} U^\varepsilon_n \Vert_{L^2(\Omega_T)} \Vert \nabla \psi \Vert_{L^2(\Omega_T)}  + \Vert \nabla \rho^\varepsilon_n \Vert_{L^2(\Omega_T)}\Vert \nabla \psi \Vert_{L^2(\Omega_T)} \Big) \\ 
        \leq & C \Vert \nabla \psi \Vert_{L^2(\Omega_T)} , 
    \end{aligned}
\end{equation*}
whence we deduce \eqref{eq:main time deriv ii}. 
\end{proof}

The results of Lemmas \ref{lem:existence of galerkin coeffs}-\ref{lem:unif est for galerkin} and Corollary \ref{cor:time deriv est gal} prove Proposition \ref{prop:transformed reg sys existence and est}.

\section{Proof of the Main Existence Theorem}\label{sec:proof of existence result}

In this section, we study the convergence of the approximate solutions and obtain the existence of a weak solution to \eqref{eq: model 4}. We shall essentially take the double-limit as $\varepsilon \to 0$ in the regularisation parameter and $n\to\infty$ in the Galerkin approximation simultaneously, which will entail the use of a diagonal argument. We explain why we proceed in this manner in the next paragraph. 

The reason for employing a double-limit strategy is that the time-derivative estimates of Corollary \ref{cor:time deriv est gal} do not decouple into separate estimates on $\partial_t \varepsilon U^\varepsilon_n$ and $\partial_t \rho^\varepsilon_n$ individually. This means that the Aubin--Lions Lemma yields the strong convergence of only the sum $\varepsilon U^\varepsilon_n + \rho^\varepsilon_n$ as $n\to\infty$ for fixed $\varepsilon$. It is then, to the authors' knowledge, not possible to obtain the strong convergence $\rho^\varepsilon_n \to \rho^\varepsilon$ for fixed $\varepsilon$ without having already determined that $\varepsilon U^\varepsilon_n$ converges strongly. While the classical theory of \cite{ladyzhenskaya} implies that the linear heat term $\varepsilon(\partial_t u^\varepsilon_n - \Delta_{\bxi} u^\varepsilon_n)$ ought to yield $H^2$-regularity of $u^\varepsilon_n$ and hence $L^2$-boundedness of $\partial_t u^\varepsilon_n$ (from which one could infer strong convergence of $u^\varepsilon_n$ by another application of Aubin--Lions), the presence of the non-local operator $A$ and the non-local matrix $\tilde{M}$ in the equation \eqref{eq:galerkin regularised} makes \emph{quantitative estimates} rather intractable. In turn, we are not able to deduce that $U^\varepsilon_n$ converges strongly as $n\to\infty$ while keeping $\varepsilon$ fixed. Instead, we rely on the uniform bound on $\Vert \sqrt{\varepsilon}U^\varepsilon_n \Vert_{L^2(\Omega_T)}$ to deduce that $\varepsilon U^\varepsilon_n \to 0$ strongly as $\varepsilon\to0$; whence the aforementioned strong convergence of the sum $\varepsilon U^\varepsilon_n + \rho^\varepsilon_n$ yields the desired strong convergence of $\rho^\varepsilon_n$. 

The main difficulty in implementing this strategy lies in making sure that the sequence of regularised initial data converges appropriately. This is all-the-more subtle since the initial functions $\{f^\varepsilon_n(0),\rho^\varepsilon_n(0)\}_{\varepsilon,n}$ are determined via the usual non-linear transformation from $u^\varepsilon_n(0)$, the $n$-th Galerkin approximation of the initial data $u^\varepsilon_0$, \textit{i.e.}, 
\begin{equation}\label{eq:initial values for f eps and co}
    u^\varepsilon_n(0) = \sum_{j=1}^{n} \langle u^\varepsilon_0,\varphi_j \rangle_{H^2_\per(\Upsilon)} \varphi_j, \quad f^\varepsilon_n(0) = \frac{e^{u^\varepsilon_n(0)}}{1+\int_0^{2\pi}e^{u^\varepsilon_n(0)} \d \theta}, \quad \rho^\varepsilon_n(0) = \int_0^{2\pi} f^\varepsilon_n(0) \d \theta, 
\end{equation}
where we recall that $\{\varphi_j \}_j$ is the orthonormal basis of $H^2_\per(\Upsilon)$ used in \S \ref{sec:existence galerkin}. 

The next lemma shows that there exists a choice of diagonal subsequence from the array $\{f^\varepsilon_n\}_{\varepsilon,n}$ which preserves the desired convergence to the initial data $f_0$. The proof is delayed to Appendix \ref{sec:appendix initial data}.

\begin{lemma}[Diagonal convergence of initial data]\label{lem:convergence of initial data}
There exists a diagonal subsequence of the array $\{f^\varepsilon_n\}_{\varepsilon,n}$, which, with slight abuse of notation, we denote by $\{f^\varepsilon\}_\varepsilon$, such that 
\begin{equation*}
    \lim_{\varepsilon \to 0}\Vert f^\varepsilon(0) - f_0 \Vert_{L^p(\Upsilon)} = 0. 
\end{equation*}
\end{lemma}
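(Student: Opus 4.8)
The plan is to extract a diagonal subsequence from the double-indexed array $\{f^\varepsilon_n(0)\}_{\varepsilon,n}$ using a standard triangle-inequality argument, combining the two convergence facts already established. Specifically, Lemma \ref{lem:regularised initial data} gives $f^\varepsilon_0 \to f_0$ strongly in $L^p(\Upsilon)$ as $\varepsilon \to 0$, and the relation \eqref{eq:strong conv in Lq for initial f eps n} obtained inside the proof of Lemma \ref{lem:unif est for galerkin} gives, for each fixed $\varepsilon$, that $f^\varepsilon_n(0) \to f^\varepsilon_0$ strongly in $L^q(\Upsilon)$ for every $q \in [1,\infty)$ as $n \to \infty$; in particular in $L^p$. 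The triangle inequality yields
\begin{equation*}
    \Vert f^\varepsilon_n(0) - f_0 \Vert_{L^p(\Upsilon)} \leq \Vert f^\varepsilon_n(0) - f^\varepsilon_0 \Vert_{L^p(\Upsilon)} + \Vert f^\varepsilon_0 - f_0 \Vert_{L^p(\Upsilon)}.
\end{equation*}

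First I would, for each $k \in \mathbb{N}$, use the $\varepsilon \to 0$ convergence to pick $\varepsilon_k \in (0,1)$ (decreasing, $\varepsilon_k \to 0$) such that $\Vert f^{\varepsilon_k}_0 - f_0 \Vert_{L^p(\Upsilon)} < \frac{1}{2k}$. Then, for this fixed $\varepsilon_k$, I would use \eqref{eq:strong conv in Lq for initial f eps n} to pick $n_k \in \mathbb{N}$ (which we may take strictly increasing in $k$) such that $\Vert f^{\varepsilon_k}_{n_k}(0) - f^{\varepsilon_k}_0 \Vert_{L^p(\Upsilon)} < \frac{1}{2k}$. Writing $f^{\varepsilon} := f^{\varepsilon_k}_{n_k}$ along this diagonal — i.e.\ relabelling so that the $\varepsilon$-subsequence now means the pair $(\varepsilon_k, n_k)$ — the displayed triangle inequality gives $\Vert f^{\varepsilon}(0) - f_0 \Vert_{L^p(\Upsilon)} < \frac{1}{k} \to 0$, which is the claim. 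One should also note that this diagonal selection is compatible with the uniform estimates of Proposition \ref{prop:transformed reg sys existence and est}, since those hold for \emph{every} pair $(\varepsilon,n)$ and are therefore inherited by any subsequence; so passing to this diagonal costs nothing elsewhere in the argument.

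There is essentially no serious obstacle here — the statement is a soft diagonalisation lemma, and the only mild subtlety worth spelling out is the order of quantifiers: the choice of $n_k$ must be made \emph{after} $\varepsilon_k$ is fixed, because the rate of convergence in $n$ depends on $\varepsilon$ (through the $L^\infty$ bound on $u^\varepsilon_n(0)$ used in deriving \eqref{eq:strong conv in Lq for initial f eps n}, which degenerates as $\varepsilon \to 0$). A second point, handled already in the construction above, is to keep $n_k$ strictly increasing so that $\{f^{\varepsilon_k}_{n_k}\}_k$ is genuinely a subsequence of the array in the intended sense; this is harmless since we are always free to enlarge $n_k$. The only place one must be slightly careful is to confirm that all the relevant quantities ($f_0^\varepsilon$, $\rho_0^\varepsilon$, and the entropy bounds) behave well along the chosen $\varepsilon_k$ — but this is immediate from Lemma \ref{lem:regularised initial data}, whose conclusions are already phrased along a subsequence and pass to any further subsequence without change.
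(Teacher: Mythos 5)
Your proof is correct and follows essentially the same route as the paper: a quantitative diagonal selection combining the $n\to\infty$ convergence \eqref{eq:strong conv in Lq for initial f eps n} at fixed $\varepsilon$ with the $\varepsilon\to 0$ convergence of Lemma \ref{lem:regularised initial data}, glued together by the triangle inequality. The only cosmetic difference is that the paper first constructs nested subsequences $\sigma_m$ achieving the error bound $1/n$ simultaneously for all earlier $\varepsilon$-indices and then takes the diagonal $\sigma_m(m)$, whereas you choose the pairs $(\varepsilon_k,n_k)$ directly with error $1/(2k)$ each; the content is identical.
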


We are now ready to give the proof of the main existence theorem; Theorem \ref{thm:main existence result}. We will show that the sequence $\{f^\varepsilon\}_\varepsilon$ constructed in the previous lemma admits a subsequential limit which is indeed a weak solution of \eqref{eq: model 4} in the sense of Definition \ref{def:weak sol}. We emphasise that $\{f^\varepsilon\}_\varepsilon$ satisfies the uniform estimates of Proposition \ref{prop:transformed reg sys existence and est}. 

\begin{proof}[Proof of Theorem \ref{thm:main existence result}]

We assume for the time being that the test function $\psi$ belongs to the stronger space $\mathcal{A}$. We shall relax this assumption at the end of the proof. 

\noindent 1. \textit{Convergence of time-derivative}: The uniform bound \eqref{eq:main time deriv reg i} implies $$\partial_t (\varepsilon u^\varepsilon + f^\varepsilon) \overset{*}{\rightharpoonup} \zeta \quad \text{ weakly-* in } X', $$
for some unkown $\zeta \in X'$. The estimate \eqref{eq:sqrt func di ben reg} implies $f^\varepsilon \rightharpoonup f$ weakly in $L^3(\Upsilon_T)$ whence, for all $\psi \in \mathcal{A}$, 
\begin{equation*}
\begin{aligned}
    \langle \zeta, \psi \rangle = \lim_{\varepsilon \to 0} \langle \partial_t(\varepsilon u^\varepsilon + f^\varepsilon) , \psi \rangle &= -\lim_{\varepsilon\to0}\sqrt{\varepsilon}\int_{\Upsilon_T} \sqrt{\varepsilon}u^\varepsilon \partial_t \psi \d\bxi \d t -\lim_{\varepsilon\to0}\int_{\Upsilon_T} f^\varepsilon \partial_t \psi \d \bxi \d t \\ 
    &= -\int_{\Upsilon_T} f \partial_t \psi \d \bxi \d t, 
\end{aligned}
\end{equation*}
where we used that $\Vert \sqrt{\varepsilon} u^\varepsilon\Vert_{L^2(\Upsilon_T)} \leq C$ from \eqref{eq:main est reg i} to obtain the final line. It therefore follows that $\zeta = \partial_t f$ and $\partial_t f^\varepsilon \overset{*}{\rightharpoonup} \partial_t f$ weakly-* in $X'$. An analogous argument shows $\partial_t \rho^\varepsilon \overset{*}{\rightharpoonup} \partial_t \rho$ weakly-* in $Y'$.

\noindent 2. \textit{Strong convergence of $\rho^\varepsilon$}: To begin with, \eqref{eq:main est reg i} implies $\Vert \rho^\varepsilon \Vert_{L^2(0,T;H^1(\Omega))} \leq C$, whence it follows that $\rho^\varepsilon \rightharpoonup \rho$ weakly $L^2(0,T;H^1(\Omega))$. Meanwhile, the uniform estimates from Proposition \ref{prop:transformed reg sys existence and est} on $\sqrt{\varepsilon} U^\varepsilon$ and $\rho^\varepsilon$ imply that $$\Vert \varepsilon U^\varepsilon + \rho^\varepsilon \Vert_{L^\infty(0,T;L^2(\Omega))} + \Vert \varepsilon U^\varepsilon + \rho^\varepsilon \Vert_{L^2(0,T;H^1(\Omega))} \leq C.$$
In turn, the time-derivative bound \eqref{eq:main time deriv reg ii} and the Aubin--Lions Lemma imply that, up to a subsequence which we do not relabel, 
\begin{equation*}
    \varepsilon U^\varepsilon + \rho^\varepsilon \to A \quad \text{strongly in } L^2(\Omega_T). 
\end{equation*}
for some unknown $A \in L^2(\Omega_T)$. Note that the triangle inequality and the uniform estimate on $\sqrt{\varepsilon}U^\varepsilon$ from Proposition \ref{prop:transformed reg sys existence and est} imply 
\begin{equation*}
   \begin{aligned}
       \Vert \rho^\varepsilon - A \Vert_{L^2(\Omega_T)}  \leq & \Vert \varepsilon U^\varepsilon + \rho^\varepsilon - A \Vert_{L^2(\Omega_T)}  + \sqrt{\varepsilon} \underbrace{\Vert \sqrt{\varepsilon} U^\varepsilon \Vert_{L^2(\Omega_T)}}_{\leq C} \to 0, 
   \end{aligned}
\end{equation*}
and thus by the uniqueness of limits, $A=\rho$, \textit{i.e.}, 
\begin{equation}
    \rho^\varepsilon \to \rho \quad \text{strongly in } L^2(\Omega_T), 
\end{equation}
and $\rho \in [0,1]$ a.e.~in $\Omega_T$. Moreover, for all $q \in (2,\infty)$, the boundedness of $\rho,\rho^\varepsilon$ implies 
\begin{equation*}
    \begin{aligned}
        \int_{\Omega_T} |\rho^\varepsilon - \rho|^q \d x \d t = \int_{\Omega_T} |\rho^\varepsilon - \rho|^{q-2} |\rho^\varepsilon - \rho|^2 \d x \d t \leq \int_{\Omega_T} |\rho^\varepsilon - \rho|^2 \d x \d t \to 0. 
    \end{aligned}
\end{equation*}
Using Jensen's inequality to cover the indices $q \in [1,2)$, it follows that, for all $q \in [1,\infty)$, 
\begin{equation}\label{eq:strong conv rho}
    \rho^\varepsilon \to \rho \quad \text{strongly in } L^q(\Omega_T). 
\end{equation}

The previous convergence implies that $\rho^\varepsilon \to \rho$ a.e.~in $\Omega_T$; up to a subsequence which we do not relabel. In turn, given any $q \in [1,\infty)$, an application of the Dominated Convergence Theorem shows, using again $0 \leq \rho,\rho^\varepsilon_n \leq 1$ to establish an admissible constant majorant function, 
\begin{equation*}
    \begin{aligned}
        \int_{\Omega_T} \big|\sqrt{1-\rho^\varepsilon} - \sqrt{1-\rho} \big|^q \d x \d t \to 0, 
    \end{aligned}
\end{equation*}
\textit{i.e.}, for all $q \in [1,\infty)$, 
\begin{equation}\label{eq:strong cov for sqrt 1-rho}
    \sqrt{1-\rho^\varepsilon} \to \sqrt{1-\rho} \quad \text{strongly in } L^q(\Omega_T). 
\end{equation}

\noindent 3. \textit{Weak convergences and bounds}: The uniform estimate \eqref{eq:bound on f sqrt 1-rho main one for limit reg}  implies that there exists $B \in L^{\frac{6}{5}}(0,T;W^{1,\frac{6}{5}}(\Upsilon))$ such that, up to a subsequence which we do not relabel, there holds 
\begin{equation*}
    f^\varepsilon \sqrt{1-\rho^\varepsilon} \rightharpoonup B \quad \text{weakly in } L^{\frac{6}{5}}(0,T;W^{1,\frac{6}{5}}(\Upsilon)). 
\end{equation*}
Meanwhile, the uniform estimate \eqref{eq:sqrt func di ben reg}  implies that $f^\varepsilon \rightharpoonup f$ weakly in $L^3(\Upsilon_T)$, so that, recalling the strong convergence \eqref{eq:strong cov for sqrt 1-rho}, there holds 
\begin{equation*}
    f^\varepsilon \sqrt{1-\rho^\varepsilon} \rightharpoonup f \sqrt{1-\rho} \quad \text{weakly in } L^3(\Upsilon_T), 
\end{equation*}
and the uniqueness of weak limits implies $B = f \sqrt{1-\rho}$ a.e.~in $\Upsilon_T$. We therefore have 
\begin{equation}\label{eq:weak conv for f sqrt 1-rho}
    f^\varepsilon \sqrt{1-\rho^\varepsilon} \rightharpoonup f\sqrt{1-\rho} \quad \text{weakly in } L^{\frac{6}{5}}(0,T;W^{1,\frac{6}{5}}(\Upsilon)). 
\end{equation}
In turn, recalling the strong convergence \eqref{eq:strong cov for sqrt 1-rho}, we have that 
\begin{equation}\label{eq:weak conv for diffusive term i}
    \sqrt{1-\rho^\varepsilon} \nabla_{\bxi}(f^\varepsilon \sqrt{1-\rho^\varepsilon}) \rightharpoonup \sqrt{1-\rho} \nabla_{\bxi} (f\sqrt{1-\rho}) \quad \text{weakly in } L^{\frac{6}{5}}(\Upsilon_T). 
\end{equation}
Analogous arguments show $\sqrt{1-\rho^\varepsilon}\nabla \sqrt{f^\varepsilon} \rightharpoonup \sqrt{1-\rho}\nabla\sqrt{f}$ and $\nabla \sqrt{1-\rho^\varepsilon} \rightharpoonup \nabla \sqrt{1-\rho}$ weakly in $L^2(\Omega_T)$. 

Similarly, we deduce from \eqref{eq:main est reg i} and \eqref{eq:sqrt func di ben reg} that 
\begin{equation*}
    \Vert \partial_\theta f^\varepsilon \Vert_{L^{\frac{3}{2}}(\Upsilon_T)} \leq C\Vert \sqrt{f^\varepsilon} \Vert_{L^6(\Upsilon_T)} \Vert \partial_\theta \sqrt{f^\varepsilon} \Vert_{L^2(\Upsilon_T)} \leq C, 
\end{equation*}
whence $f^\varepsilon$ is uniformly bounded in $L^{\frac{3}{2}}(\Omega_T;W^{1,\frac{3}{2}}(0,2\pi))$. Using the uniqueness of limits and $f^\varepsilon \rightharpoonup f$ weakly in $L^3(\Upsilon_T)$, we deduce as before that 
\begin{equation}\label{eq:weak conv of dtheta f eps}
    \partial_\theta f^\varepsilon \rightharpoonup \partial_\theta f \quad \text{weakly in } L^{\frac{3}{2}}(\Upsilon_T). 
\end{equation}
An analogous approach shows $\partial_\theta \sqrt{f^\varepsilon} \rightharpoonup \partial_\theta \sqrt{f}$ weakly in $L^2(\Upsilon_T)$. 

In view of the weak (resp.~weak-*) lower semicontinuity of the norms, the limits satisfy 
\begin{equation}\label{eq:bounds final weak lower semi}
    \begin{aligned}
       &f \in L^3(\Upsilon_T), \quad  \partial_t f \in X' , \quad \partial_\theta \sqrt{f} \in L^{2}(\Upsilon_T), \quad \nabla\sqrt{1-\rho} \in L^2(\Omega_T), \\ 
       &\partial_t \rho \in Y', \quad \sqrt{1-\rho}\nabla \sqrt{f} \in L^2(\Upsilon_T), \quad f\sqrt{1-\rho} \in L^{\frac{6}{5}}(0,T;W^{1,\frac{6}{5}}(\Upsilon)). 
    \end{aligned}
\end{equation}

\noindent 4. \textit{Convergence of spatial diffusions}: We rewrite the diffusive terms as follows 
\begin{equation}\label{eq:rewriting of diffusive terms later}
    \begin{aligned}
        (1-\rho^\varepsilon)\nabla f^\varepsilon + f^\varepsilon \nabla \rho^\varepsilon = \sqrt{1-\rho^\varepsilon} \nabla (f^\varepsilon \sqrt{1-\rho^\varepsilon}) + \frac{3}{2} f^\varepsilon \nabla \rho^\varepsilon. 
    \end{aligned}
\end{equation}
The weak convergence \eqref{eq:weak conv for diffusive term i} implies 
\begin{equation}\label{eq:weak conv diff i actual test func}
   \lim_{\varepsilon \to 0} \int_{\Upsilon_T} \sqrt{1-\rho^\varepsilon} \nabla (f^\varepsilon \sqrt{1-\rho^\varepsilon}) \cdot \nabla  \psi \d \bxi \d t = \int_{\Upsilon_T} \sqrt{1-\rho} \nabla (f\sqrt{1-\rho}) \cdot \nabla \psi \d \bxi \d t, 
\end{equation}
and the desired convergence for the first term on the right-hand side of \eqref{eq:rewriting of diffusive terms later} is verified.

The second term on the right-hand side of \eqref{eq:rewriting of diffusive terms later} is dealt with as follows; we essentially use an integration by parts to place two derivatives on the test function prior to passing to the limit. We write 
\begin{equation*}
    f^\varepsilon \nabla \rho^\varepsilon = - 2 f^\varepsilon \sqrt{1-\rho^\varepsilon} \nabla \sqrt{1-\rho^\varepsilon}, 
\end{equation*}
and observe that an integration by parts yields 
\begin{equation}\label{eq:steps to decompose final convergence}
    \begin{aligned}
        \int_{\Upsilon_T} f^\varepsilon \nabla \rho^\varepsilon \cdot \nabla \psi \d \bxi \d t =& -2 \int_{\Upsilon_T} f^\varepsilon \sqrt{1-\rho^\varepsilon} \nabla \sqrt{1-\rho^\varepsilon} \cdot \nabla \psi \d \bxi \d t \\ 
        =& 2 \int_{\Upsilon_T}  \sqrt{1-\rho^\varepsilon} \dv \big( f^\varepsilon \sqrt{1-\rho^\varepsilon} \nabla \psi \big) \d \bxi \d t \\ 
        =& I_1^\varepsilon + I_2^\varepsilon, 
    \end{aligned}
\end{equation}
where 
\begin{equation*}
    \begin{aligned}
        I_1^\varepsilon :=& 2 \int_0^T \int_\Upsilon  \sqrt{1-\rho^\varepsilon} \nabla \big( f^\varepsilon \sqrt{1-\rho^\varepsilon} \big) \cdot \nabla \psi \d \bxi \d t, \\ 
        I_2^\varepsilon :=& 2\int_0^T \int_\Upsilon  \sqrt{1-\rho^\varepsilon} \big( f^\varepsilon \sqrt{1-\rho^\varepsilon}\big) \Delta \psi  \d \bxi \d t. 
    \end{aligned}
\end{equation*}
The weak convergence from \eqref{eq:weak conv for f sqrt 1-rho} along with the strong convergence from \eqref{eq:strong cov for sqrt 1-rho} implies that there holds the weak convergences in $L^{1}(\Upsilon_T)$: 
\begin{equation*}
    \begin{aligned}
        & \sqrt{1-\rho^\varepsilon} \nabla \big( f^\varepsilon \sqrt{1-\rho^\varepsilon} \big) \rightharpoonup \sqrt{1-\rho} \nabla \big( f \sqrt{1-\rho} \big), \\ 
        & \sqrt{1-\rho^\varepsilon}  \big( f^\varepsilon \sqrt{1-\rho^\varepsilon} \big) \rightharpoonup \sqrt{1-\rho}  \big( f \sqrt{1-\rho} \big), 
    \end{aligned}
\end{equation*}
from which we deduce 
\begin{equation*}
    \begin{aligned}
       \lim_{\varepsilon \to 0} I_1^\varepsilon = & 2 \int_{\Upsilon_T}  \sqrt{1-\rho} \nabla \big( f \sqrt{1-\rho} \big) \cdot \nabla \psi \d \bxi \d t, \\ 
       \lim_{\varepsilon \to 0} I_2^\varepsilon = & 2\int_{\Upsilon_T}  \sqrt{1-\rho} \big( f \sqrt{1-\rho}\big) \Delta \psi  \d \bxi \d t. 
    \end{aligned}
\end{equation*}
Using the computation in \eqref{eq:steps to decompose final convergence} again, we obtain 
\begin{equation*}
    \begin{aligned}
        \lim_{\varepsilon\to 0}\int_{\Upsilon_T} f^\varepsilon \nabla \rho^\varepsilon \cdot \nabla \psi \d \bxi \d t =& 2 \int_{\Upsilon_T}  \sqrt{1-\rho} \dv \big( f \sqrt{1-\rho} \nabla \psi \big) \d \bxi \d t \\ 
        =& \int_{\Upsilon_T} f \nabla \rho \cdot \nabla \psi \d \bxi \d t, 
    \end{aligned}
\end{equation*}
where we used that the above quantities are regular enough to perform the integration by parts; \textit{cf.}~\eqref{eq:bounds final weak lower semi}. By collating the previous limit with \eqref{eq:weak conv diff i actual test func} and using \eqref{eq:rewriting of diffusive terms later}, we get 
\begin{equation*}
    \lim_{\varepsilon \to 0} \int_{\Upsilon_T} \big( (1-\rho^\varepsilon)\nabla f^\varepsilon + f^\varepsilon \nabla \rho^\varepsilon \big) \cdot \nabla \psi \d \bxi \d t = \int_{\Upsilon_T} \big( (1-\rho)\nabla f + f \nabla \rho \big) \cdot \nabla \psi \d \bxi \d t. 
\end{equation*}

\noindent 5. \textit{Convergence of angular diffusion, non-local drift, and $\varepsilon$-error term}: For the angular diffusion, the weak convergence of $\partial_\theta f^\varepsilon$ in \eqref{eq:weak conv of dtheta f eps} implies directly 
\begin{equation*}
   \begin{aligned}
      \lim_{\varepsilon \to 0} \int_{\Upsilon_T} \partial_\theta f^\varepsilon \partial_\theta \psi \d \bxi \d t = \int_{\Upsilon_T} \partial_\theta f \partial_\theta \psi \d\bxi \d t. 
   \end{aligned} 
\end{equation*}

For the non-local drift, the strong convergence of $\rho^\varepsilon$ from \eqref{eq:strong conv rho} and the weak convergence of $f^\varepsilon$ implies $(1-\rho^\varepsilon)f^\varepsilon \rightharpoonup (1-\rho) f$ weakly in $L^1(\Upsilon_T)$, whence 
\begin{equation*}
    \lim_{\varepsilon\to0}\int_{\Upsilon_T} (1-\rho^\varepsilon) f^\varepsilon \e(\theta) \cdot \nabla \psi \d \bxi \d t = \int_{\Upsilon_T} (1-\rho^\varepsilon) f^\varepsilon \e(\theta) \cdot \nabla \psi \d \bxi \d t. 
\end{equation*}

Similarly, we have 
\begin{equation*}
    \lim_{\varepsilon \to 0}\int_\Upsilon \varepsilon \nabla_{\bxi}u^\varepsilon \cdot \nabla_{\bxi} \psi \d \bxi \d t \leq \lim_{\varepsilon \to 0}\sqrt{\varepsilon}\underbrace{\Vert \nabla_{\bxi} \sqrt{\varepsilon} u^\varepsilon \Vert_{L^2(\Upsilon)}}_{\leq C} \Vert \nabla_{\bxi} \psi \Vert_{L^2(\Upsilon)} = 0, 
\end{equation*}
where we used the uniform estimate \eqref{eq:main est reg i}.

\noindent 6. \textit{Admissible test functions and attainment of initial data}: It therefore follows that the weak formulation of Definition \ref{def:weak sol} holds for all test functions $\psi \in \mathcal{A}$. Using the density of this latter space in $X$ and the fact that $f,\rho$ and their derivatives are bounded in suitable spaces, \textit{cf.}~equation \eqref{eq:weak form is well defined in intro}, the weak formulation extends to test functions belonging to $X$. 

Finally, the attainment of the initial data in the dual space $Z'$ is clear from Remark \ref{rem:ibp for duality bracket} and Lemma \ref{lem:convergence of initial data}. The proof is complete. 
\end{proof}

We end this section by recording that the solution obtained in Theorem \ref{thm:main existence result} also satisfies an alternative notion of weak solution, which does not require the test functions of the weak formulation to be periodic with respect to the space-angle variable $\bxi$; this is the content of the next lemma. This formulation is convenient when implementing the method of De Giorgi to study the regularity of the solution (\textit{cf}.~\cite{regularity}), which will be the subject of future works. The proof is identical to that of Theorem \ref{thm:main existence result}, using that integrating by parts over all of $\mathbb{R}^3$ against an arbitrary non-periodic test function $\psi \in C^\infty_c((0,T)\times\mathbb{R}^3)$ yields no boundary terms, as was also the case when testing against periodic test functions $\psi \in X$ and integrating over one periodic cell. 

\begin{lemma}
    Let $f \in \mathcal{X}$ be the weak solution obtained in Theorem \ref{thm:main existence result}. Then, for all $\psi \in C^\infty_c((0,T)\times\mathbb{R}^3)$, there holds 
      \begin{equation*}
    \begin{aligned}
      \int_0^T \int_{\mathbb{R}^3} f \partial_t \psi \d \bxi \d t + & \pe \int_{0}^T \int_{\mathbb{R}^3} (1-\rho) f \e(\theta) \cdot \nabla \psi \d \bxi \d t \\ 
      &=\int_{0}^T \int_{\mathbb{R}^3} \Big( D_e\big((1-\rho)\nabla f + f \nabla \rho \big) \cdot \nabla \psi + \partial_\theta f\partial_\theta \psi \Big) \d \bxi \d t. 
    \end{aligned}
\end{equation*}
\end{lemma}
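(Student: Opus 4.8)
The plan is to deduce the non-periodic weak formulation directly from the periodic one established in Theorem~\ref{thm:main existence result}, exploiting the fact that a compactly-supported test function on $(0,T)\times\mathbb{R}^3$ can be decomposed into contributions over translated copies of the periodic cell $\Upsilon$. First I would fix an arbitrary $\psi \in C^\infty_c((0,T)\times\mathbb{R}^3)$ and, since its spatial-angular support is bounded, cover it by finitely many translates $\Upsilon + 2\pi k$ for $k$ ranging over a finite subset $K \subset \mathbb{Z}^3$. By the periodicity of $f,\rho$ in $\bxi$, each integral $\int_{\Upsilon + 2\pi k}(\cdots)\d\bxi$ over a translated cell equals the corresponding integral over $\Upsilon$ with the test function replaced by its shift $\psi(\cdot,\cdot+2\pi k)$; summing over $k \in K$ is the same as testing the periodic weak formulation \eqref{eq:alt dt on test function} against the function $\psi_{\per} := \sum_{k \in K}\psi(t,\bxi+2\pi k)$, which is smooth and $2\pi$-periodic in $\bxi$ and vanishes near $t=0,T$, hence lies in $\mathcal{A}$ (or in $X$ after the density extension at the end of the proof of Theorem~\ref{thm:main existence result}).

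Next I would verify that each term transforms correctly under this periodization. The point is that $\int_0^T\int_{\mathbb{R}^3} g(t,\bxi)\,\phi(t,\bxi)\,\d\bxi\,\d t = \int_0^T\int_\Upsilon g(t,\bxi)\,\phi_{\per}(t,\bxi)\,\d\bxi\,\d t$ whenever $g$ is $2\pi$-periodic in $\bxi$ and $\phi \in C^\infty_c$; applying this with $g$ equal in turn to $f$ (paired with $\partial_t\psi$), $(1-\rho)f\e(\theta)$ (paired with $\nabla\psi$), $(1-\rho)\nabla f + f\nabla\rho$ (paired with $\nabla\psi$), and $\partial_\theta f$ (paired with $\partial_\theta\psi$), and using that differentiation commutes with the finite sum defining the periodization so that $(\nabla\psi)_{\per} = \nabla(\psi_{\per})$, $(\partial_\theta\psi)_{\per} = \partial_\theta(\psi_{\per})$, $(\partial_t\psi)_{\per} = \partial_t(\psi_{\per})$, every term in the non-periodic identity over $\mathbb{R}^3$ becomes the matching term in \eqref{eq:alt dt on test function} tested against $\psi_{\per}$. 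Since $\psi_{\per} \in X$ (it is certainly $C^\infty$, periodic, compactly supported in time, and trivially bounded in $W^{1,6}$), the periodic weak formulation applies verbatim, and the non-periodic identity follows. One should check that the duality pairing $\langle \partial_t f,\psi_{\per}\rangle$ can be rewritten as the distributional integral $\int f\,\partial_t\psi_{\per}$ exactly as in \eqref{eq:alt dt on test function}, which is immediate since $\psi_{\per}(0,\cdot)=\psi_{\per}(T,\cdot)=0$.

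The role of the ``no boundary terms'' remark in the statement is precisely to justify that the identity over $\mathbb{R}^3$ is equivalent to the sum over the cells of the identity over each cell: integrating by parts on $\mathbb{R}^3$ against $\psi \in C^\infty_c$ produces no boundary contributions at infinity, and the artificial interfaces $\partial(\Upsilon + 2\pi k)$ cancel pairwise when the pieces are reassembled, because $f,\rho$ and the relevant flux combinations are periodic (with the regularity recorded in \eqref{eq:bounds final weak lower semi}, which makes the interface traces well-defined in the requisite $L^q$ sense). I do not expect a genuine obstacle here: the proof is a bookkeeping argument, and the only point requiring mild care is confirming that the flux $(1-\rho)\nabla f + f\nabla\rho = \sqrt{1-\rho}\,\nabla_\bxi(f\sqrt{1-\rho}) + \tfrac32 f\nabla\rho$ has enough integrability (it lies in $L^{6/5}(\Upsilon_T) + L^{1}(\Upsilon_T)$ by \eqref{eq:weak form is well defined in intro}) for the finite-sum rearrangement and the pairing with $\nabla\psi \in L^\infty$ to be legitimate. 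With these checks in place the lemma is proved exactly as claimed.
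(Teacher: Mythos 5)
Your argument is correct, but it is a genuinely different route from the one the paper takes. The paper proves the lemma by re-running the proof of Theorem~\ref{thm:main existence result} verbatim with non-periodic test functions $\psi \in C^\infty_c((0,T)\times\mathbb{R}^3)$, observing that all the integrations by parts over $\mathbb{R}^3$ (including the step in which two derivatives are placed on the test function, producing $\Delta\psi$) generate no boundary terms, exactly as in the periodic case. You instead deduce the non-periodic formulation \emph{a posteriori} from the already-established periodic one, by unfolding $\int_{\mathbb{R}^3}$ into the translated cells $\Upsilon+2\pi k$, changing variables, and recognising the result as the periodic formulation \eqref{eq:alt dt on test function} tested against the periodization $\psi_\per$; this is legitimate because $\psi_\per\in\mathcal{A}$, the periodization commutes with $\partial_t,\nabla,\partial_\theta$, and the integrands (interpreted as in \eqref{eq:weak form is well defined in intro}) are periodic and locally integrable. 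Your approach buys a shorter, purely formal reduction that never revisits the Galerkin/compactness machinery, while the paper's approach avoids introducing the periodization at the cost of repeating the whole limit passage. Two small points in your write-up: the periodization should be taken as the (locally finite) sum over all $k\in\mathbb{Z}^3$, not just over the finite covering set $K$, if one wants a function that is genuinely $2\pi$-periodic and smooth across cell boundaries --- on $\overline\Upsilon$ the two sums coincide, which is all you actually use; and the discussion of interface traces cancelling on $\partial(\Upsilon+2\pi k)$ is superfluous in your route, since no integration by parts across cell boundaries is ever performed --- the decomposition of $\int_{\mathbb{R}^3}$ is mere countable additivity of the Lebesgue integral over a partition up to a null set.
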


\section{Uniqueness for Null P\'eclet Number}\label{sec:uniqueness zero pe}

In the case $\pe=0$, the equation reads 
\begin{equation}\label{eq: model 4 zero pe}
    \left\lbrace \begin{aligned}
    & \partial_t f = D_e \dv ((1-\rho)\nabla f + f \nabla \rho) + \partial^2_\theta f, \\ 
    &\partial_t \rho = D_e \Delta \rho. 
    \end{aligned}\right. 
\end{equation}
The main observation is that, as $\rho$ satisfies the heat equation, uniqueness for $\rho$ is trivial. With this information, we employ the method of Gajewski \cite{gajewski} to deduce that the solution of \eqref{eq: model 4 zero pe} is unique within the class $\mathcal{X}$; this approach had been used for a similar uniqueness argument by J\"ungel and Zamponi in \cite{JungelZamponi}. 

\begin{proof}[Proof of Theorem \ref{thm:uniqueness}]
Let $f_1,f_2 \in \mathcal{X}$ be two weak solutions of \eqref{eq: model 4 zero pe} with identical admissible initial data, and define $\rho_1,\rho_2$ accordingly. As $\rho_1-\rho_2$ satisfies the heat equation with zero initial data, the standard theory implies $\rho_1=\rho_2$; we therefore denote this common function by $\rho$, which satisfies the usual estimate $0\leq \rho \leq 1$. It therefore follows that
\begin{equation}
    \partial_t f_i = D_e\dv((1-\rho)\nabla f_i + f_i \nabla \rho) + \partial^2_\theta f_i 
\end{equation}
for $i \in \{1,2\}$. 

Consider the distance 
\begin{equation}\label{eq:gajewski distance}
    d(u,v) := \int_\Upsilon \Big( \zeta(u) + \zeta(v) -2\zeta\big(\frac{u+v}{2}\big) \Big) \d \bxi, 
\end{equation}
where $\zeta(s) = s(\log s -1) + 1$ for $s \geq 0$. Note that the convexity of $\zeta$ implies that $d(u,v) \geq 0$. In fact we shall consider the regularised variant for $\delta > 0$: 
\begin{equation}\label{eq:gajewski distance reg}
    d_\delta(u,v) := \int_\Upsilon \Big( \zeta_\delta(u) + \zeta_\delta(v) -2\zeta_\delta\big(\frac{u+v}{2}\big) \Big) \d \bxi, 
\end{equation}
where $\zeta_\delta(s) = \zeta(s+\delta)$, and again $d_\delta$ is non-negative because $\zeta_\delta$ is convex. Note that $\zeta'(s) = \log(s)$ for $s>0$ and that, by a standard argument using Taylor's Theorem, there holds for all $\delta > 0$: 
\begin{equation}\label{eq:est convex comb for gajewski metric}
    \zeta_\delta(u) + \zeta_\delta(v) -2\zeta_\delta\big(\frac{u+v}{2}\big) \geq \frac{1}{8}|u-v|^2; 
\end{equation}
we refer the reader to \cite[\S 6]{JungelZamponi} for further details.

Our goal is to show that $f_1 \equiv f_2$ a.e.~in $\Upsilon_T$. With this in mind, we compute 
\begin{equation}\label{eq:uniqueness expanded equals}
    \begin{aligned}
        \frac{\de}{\de t}d_\delta(f_1,f_2) = &\langle \partial_t f_1, \log(f_1+\delta) \rangle + \langle \partial_t f_2 , \log(f_2+\delta) \rangle  - \langle \partial_t f_1 + \partial_t f_2 , \log\big(\frac{f_1+f_2}{2}+\delta  \big) \rangle  \\ 
        =& - 4 D_e \int_\Upsilon \Big( \sum_{i=1}^2 |\nabla \sqrt{f_i+\delta}|^2 - |\nabla\sqrt{f_1+f_2+2\delta}|^2 \Big) (1-\rho) \d \bxi \\ 
        &- 4  \int_\Upsilon \Big(  \sum_{i=1}^2|\partial_\theta \sqrt{f_i+\delta}|^2 -  |\partial_\theta \sqrt{f_1+f_2+2\delta}|^2\Big) \d \bxi \\ 
        &- D_e \sum_{i=1}^2 \int_\Upsilon \Big(\frac{f_i}{f_1+\delta} - \frac{f_1+f_2}{f_1+f_2+2\delta}  \Big)\nabla f_i \cdot \nabla \rho \d \bxi. 
    \end{aligned}
\end{equation}
The first two integrals above are non-negative by virtue of the subadditivity of the Fisher information \cite[Lemma 9]{JungelZamponi}; \textit{cf.}~\cite[\S 3.6]{Pacard}. Our strategy will be to show that the final term on the right-hand side of the previous equation vanishes in the limit as $\delta \to 0$. It will suffice to first show that the term $\nabla f_i \cdot \nabla \rho$ is integrable, and to then apply the Dominated Convergence Theorem. We rewrite this product as 
\begin{equation*}
    \nabla f_i \cdot \nabla \rho = -4 \sqrt{1-\rho} \nabla \sqrt{f_i} \cdot \sqrt{f_i}\nabla\sqrt{1-\rho}, 
\end{equation*}
from which it follows, using the Young inequality, that 
\begin{equation*}
   \begin{aligned}
       \int_{\Upsilon_T} |\nabla f_i \cdot \nabla \rho| &\leq 2\int_{\Upsilon_T} (1-\rho)|\nabla_{\bxi} \sqrt{f_i}|^2 \d \bxi \d t + 2 \int_0^T \Big( \int_\Omega |\nabla \sqrt{1-\rho}|^2 \underbrace{\int_0^{2\pi} f_i \d \theta}_{=\rho} \Big) \d x \d t \\ 
       &\leq 2\int_{\Upsilon_T} (1-\rho)|\nabla_{\bxi} \sqrt{f_i}|^2 \d \bxi \d t + 2 \int_{\Omega_T} |\nabla \sqrt{1-\rho}|^2 \d x \d t, 
   \end{aligned} 
\end{equation*}
and thus $\nabla f_i \cdot \nabla \rho \in L^1(\Upsilon_T)$; the boundedness of the right-hand side follows from the definition of $\mathcal{X}$. It follows that, by integrating \eqref{eq:uniqueness expanded equals} in time, there holds, for a.e.$t \in (0,T)$, 
\begin{equation*}
    d_\delta(f_1,f_2)(t) - \underbrace{d_\delta(f_1,f_2)(0)}_{=0} \leq D_e \sum_{i=1}^2 \int_0^t \int_{\Upsilon} \Big(\frac{f_i}{f_1+\delta} - \frac{f_1+f_2}{f_1+f_2+2\delta}  \Big)\nabla f_i \cdot \nabla \rho \d \bxi \d \tau, 
\end{equation*}
whence, by the Dominated Convergence Theorem and using $d_\delta \geq 0$, we deduce that, for a.e.~$t \in (0,T)$, 
\begin{equation*}
    \lim_{\delta \to 0^+}d_\delta(f_1,f_2)(t) = 0. 
\end{equation*}
It follows from Fatou's Lemma that 
\begin{equation*}
    \lim_{\delta \to 0^+} \Big( \zeta_\delta(f_1) + \zeta_\delta(f_1) -2\zeta_\delta\big(\frac{f_1+f_2}{2}\big) \Big) = 0 \quad \text{a.e.~in } \Upsilon_T, 
\end{equation*}
whence the estimate \eqref{eq:est convex comb for gajewski metric} implies that $f_1 \equiv f_2$ a.e.~in $\Upsilon_T$, as required. 
\end{proof}

\section{Existence and Uniqueness of Stationary States}\label{sec:stat states}

We consider the existence and uniqueness of stationary solutions $f_\infty : \Upsilon \to \mathbb{R}$ which satisfy, with $\rho_\infty(x) = \int_0^{2\pi} f_\infty(x,\theta) \d \theta$, 
\begin{equation}\label{eq:stat state}
    \pe \dv\big((1-\rho_\infty) f_\infty \e(\theta)\big) = D_e \dv\big( (1-\rho_\infty)\nabla f_\infty + f_\infty \nabla \rho_\infty \big) + \partial^2_\theta f_\infty. 
\end{equation}
The question of existence of stationary states is straightforward, with no restriction on the P\'eclet number. Indeed, given any $\pe \in \mathbb{R}$, there exists at least one non-trivial constant stationary solution; namely $f_\infty = 1/2\pi$, which gives rise to $\rho_\infty =1$. In fact, by writing the drift term as 
\begin{equation*}
    \dv\big((1-\rho_\infty) f_\infty \e(\theta)\big) =  \e(\theta) 
 \cdot \nabla\big( (1-\rho_\infty) f_\infty \big), 
\end{equation*}
for sufficiently regular $f_\infty$, we deduce that all constant solutions satisfy \eqref{eq:stat state}.

We therefore focus entirely on uniqueness henceforth, and divide our results into two subsections; \S \ref{sec:stat null peclet} focused exclusively on the case $\pe=0$ by means of a simple variational approach, and \S \ref{sec:stat fixed point} concerned with uniqueness of strong solutions near constant stationary states for general $\pe$, which requires a more involved analysis.

\subsection{Uniqueness of stationary solutions for null P\'eclet number}\label{sec:stat null peclet}

The case $\pe=0$ is particularly simple since, in this case, the equation \eqref{eq: model 4} admits an exact gradient flow structure. We shall show that stationary states are unique up to a mass constraint via a variational approach. The proof does not readily generalise to $\pe \neq 0$; this is essentially because the perturbative term $V$ in \eqref{eq:grad flow perturbation in terms of u} does not admit a primitive, whence $\nabla_{\bxi}\cdot(\tilde{M}(u)(\nabla_{\bxi}u+V))$ cannot be rewritten as the dissipation of a suitable energy (\textit{cf.}~\cite[\S 3.3]{BoundEntropy}).

\begin{thm}\label{thm:euler lagrange stat states}
 Let $\pe=0$ and $m>0$. Then, the stationary solution $f_\infty$ of \eqref{eq:stat state} is unique within the class $\int_\Upsilon f_\infty \d \bxi = m$. 
\end{thm}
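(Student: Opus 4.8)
The plan is to exploit the exact gradient-flow structure available when $\pe=0$: for this value of the P\'eclet number, \eqref{eq:stat state} is the Euler--Lagrange equation, under the mass constraint $\int_\Upsilon f_\infty \d \bxi=m$, of the \emph{strictly} convex entropy $E$ --- and a strictly convex functional has at most one constrained minimiser. It therefore suffices to show that every stationary state coincides with that minimiser; in fact I will show it is forced to be the constant $f_\infty\equiv m/|\Upsilon|=m/(8\pi^3)$. This already forces $m\le|\Omega|=4\pi^2$ (a stationary state in the natural class satisfies $\rho_\infty\le1$, so $m=\int_\Omega\rho_\infty \d x\le|\Omega|$), so for $m>4\pi^2$ there is nothing to prove.

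\emph{Step 1: $\rho_\infty$ is constant.} First I would test the weak formulation of \eqref{eq:stat state} with a function $\psi=\psi(x)$ of the spatial variable only. The drift term vanishes because $\pe=0$, the angular term vanishes because $\partial_\theta\psi=0$, and --- using Fubini--Tonelli (licit by the integrability built into the natural stationary class) together with $\int_0^{2\pi}\big((1-\rho_\infty)\nabla f_\infty+f_\infty\nabla\rho_\infty\big) \d \theta=\nabla\rho_\infty$ --- the spatial diffusion collapses to $D_e\int_\Omega\nabla\rho_\infty\cdot\nabla\psi \d x$. Hence $\int_\Omega\nabla\rho_\infty\cdot\nabla\psi \d x=0$ for all such $\psi$, and by density for all $\psi\in H^1_\per(\Omega)$; since $\nabla\rho_\infty\in L^2(\Omega)$ (part of the class, just as in \eqref{eq:main est gal i}), $\rho_\infty$ is weakly harmonic on the torus, hence constant: $\rho_\infty\equiv\bar\rho:=m/|\Omega|\in[0,1]$. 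This is the stationary shadow of the fact that $\rho$ solves the heat equation when $\pe=0$.

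\emph{Step 2: $f_\infty$ is constant.} Now $\nabla\rho_\infty=0$, so \eqref{eq:stat state} reduces to the linear constant-coefficient equation $D_e(1-\bar\rho)\Delta f_\infty+\partial_\theta^2 f_\infty=0$ on $\Upsilon$. If $\bar\rho<1$, then $\nabla_{\bxi}\sqrt{f_\infty}\in L^2(\Upsilon)$ (using that $1-\bar\rho$ is a positive constant), so $\nabla_{\bxi}f_\infty=2\sqrt{f_\infty}\,\nabla_{\bxi}\sqrt{f_\infty}\in L^1(\Upsilon)$ and $f_\infty$ solves the reduced equation in the sense of distributions; since that operator is elliptic with constant coefficients it is hypoelliptic, whence $f_\infty\in C^\infty_\per(\overline{\Upsilon})$. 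Testing the equation against $f_\infty$ is then legitimate and yields $D_e(1-\bar\rho)\Vert\nabla f_\infty\Vert_{L^2(\Upsilon)}^2+\Vert\partial_\theta f_\infty\Vert_{L^2(\Upsilon)}^2=0$, hence $\nabla_{\bxi}f_\infty\equiv0$ on the connected manifold $\Upsilon$ and $f_\infty\equiv m/|\Upsilon|$. In the endpoint case $\bar\rho=1$, i.e.\ $m=|\Omega|$, the equation degenerates to $\partial_\theta^2 f_\infty=0$, so $f_\infty$ is $\theta$-independent by periodicity and $\rho_\infty=2\pi f_\infty=1$ forces $f_\infty\equiv\tfrac{1}{2\pi}=m/|\Upsilon|$. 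In either case $f_\infty$ is uniquely determined. Equivalently, one has shown that $u_\infty=\log f_\infty-\log(1-\rho_\infty)$ is constant --- precisely the Euler--Lagrange equation $E'[f_\infty]=\mathrm{const}$ --- which, combined with Jensen's inequality applied to $E$, also identifies this constant with the unique minimiser of $E$ over $\{g\ge0:\int_0^{2\pi} g \d \theta\le1,\ \int_\Upsilon g \d \bxi=m\}$.

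\emph{The main obstacle} is not the algebra but the justification of the test-function manipulations at the meagre regularity the solution class provides (essentially $f_\infty\ge0$, $0\le\rho_\infty\le1$, $\nabla_{\bxi}\sqrt{f_\infty}\in L^2$, $\nabla\rho_\infty\in L^2$): in Step 1, upgrading the weak formulation --- a priori tested only against functions of $(x,\theta)$ --- to the weak harmonic identity for $\rho_\infty$ tested against functions of $x$ alone, via Fubini and a density argument; and in Step 2, confirming that $f_\infty$ genuinely solves the reduced elliptic equation in the distributional sense so that elliptic regularity/hypoellipticity may be invoked. If one would rather not call on regularity theory, Step 2 can instead be carried out by testing with the truncated entropy variables $\log(f_\infty+\delta)-\log(1-\rho_\infty+\delta)$ and letting $\delta\to0^+$, at the cost of reproducing in the stationary setting the Fisher-information subadditivity and dominated-convergence bookkeeping used in the proof of Theorem~\ref{thm:uniqueness}. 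The degenerate endpoint $\bar\rho=1$ must in any case be isolated and handled by the elementary argument above.
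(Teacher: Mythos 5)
Your argument is correct (up to the same regularity caveats the paper itself waives with ``formal computations ... easily rendered rigorous''), but it follows a genuinely different route from the paper's. The paper proves the theorem in one stroke of the gradient-flow structure: it tests the stationary equation with the entropy variable $u_\infty = E'[f_\infty] = \log\big(\tfrac{f_\infty}{1-\rho_\infty}\big)$, obtains that the dissipation functional $F[f_\infty]=\int_\Upsilon \tilde M(u_\infty)\nabla_{\bxi}u_\infty\cdot\nabla_{\bxi}u_\infty \,\d\bxi$ vanishes, and concludes constancy of $f_\infty$ from the non-negativity of the integrand plus ``basic manipulations'', the mass constraint then fixing the constant. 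You instead decouple the problem: first you test with $x$-dependent test functions to show $\nabla\rho_\infty$ is weakly harmonic (the stationary shadow of the heat equation for $\rho$ when $\pe=0$, exactly as in the proof of Theorem~\ref{thm:uniqueness}), hence $\rho_\infty\equiv\bar\rho$; then the equation collapses to the constant-coefficient elliptic equation $D_e(1-\bar\rho)\Delta f_\infty+\partial_\theta^2 f_\infty=0$, which you resolve by hypoellipticity plus an energy estimate obtained by testing with $f_\infty$ itself, treating the degenerate endpoint $\bar\rho=1$ separately. What your route buys: it never introduces the logarithm, so the degenerate sets $\{f_\infty=0\}$ and $\{\rho_\infty=1\}$ --- which in the paper's version are hidden inside ``the above is well-defined'' and ``basic manipulations'' --- are handled explicitly, it works at the low regularity of the natural stationary class with only standard elliptic theory (or your $\delta$-truncated entropy alternative), and it identifies the solution concretely as $m/|\Upsilon|$ together with the compatibility restriction $m\le|\Omega|$. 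What the paper's route buys: it is shorter, stays entirely within the Wasserstein/entropy-dissipation framework of \S\ref{sec:calc var} (testing with $E'[f_\infty]$ is exactly the stationary analogue of the entropy estimate \eqref{eq:switching from duality bracket to entropy dissipation}), and is the natural template for the convergence-to-equilibrium results alluded to after the theorem; your opening appeal to strict convexity of $E$ is consistent with that picture but is not actually needed for your two-step argument.
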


Further results regarding the convergence to stationary states can be obtained using the theory of Bakry--Emery or log-Sobolev inequalities applied to equations endowed with an exact Wasserstein gradient flow structure (\textit{cf.}~\textit{e.g.}~\cite{AGS08,Carrillo:2003uk,JungelEntMeth}). This is well-known, and we do not include the details here. {The study of the convergence to stationary states by means of hypocoercivity methods for the case of non-zero P\'eclet number, for which one does not have an exact gradient flow structure, will be the subject of future investigation.}

\begin{proof}

\noindent 1. \textit{Variational consequence of stationary equation}: As per \S \ref{sec:calc var}, we introduce the transformed variable $u_\infty$ 
\begin{equation}\label{eq:u infty}
    u_\infty := \log \big( \frac{f_\infty}{1-\rho_\infty} \big). 
\end{equation}
For clarity of presentation, we assume that the above is well-defined and perform formal computations; these may easily be rendered rigorous by standard approximation arguments. We show that any stationary solution satisfies 
    \begin{equation}\label{eq:euler lagrange type}
        \int_\Upsilon f_\infty (1-\rho_\infty) |\nabla_{\bxi} u_\infty|^2 \d \bxi = 0. 
    \end{equation}
Indeed, define the functional 
    \begin{equation*}
        F[f] := \int_\Upsilon \Big( D_e (1-\rho)f|\nabla u|^2 + f|\partial_\theta u |^2 \Big) \d \bxi = \int_\Upsilon \tilde{M}(u)\nabla_{\bxi} u \cdot \nabla_{\bxi} u \d \bxi \geq 0, 
    \end{equation*}
  where we used that $\tilde{M}(u)$ is positive semidefinite, and observe that testing the equation \eqref{eq:stat state} with $u_\infty$ yields $F[f_\infty]=0$. The equality \eqref{eq:euler lagrange type} follows from the non-negativity of the integrand in the functional $F$ and basic manipulations. 
   
   \noindent 2. \textit{Uniqueness from mass constraint}: The relation \eqref{eq:euler lagrange type} implies that all stationary solutions with $\pe=0$ are constant. The mass constraint $\int_\Upsilon f_\infty \d \bxi = m$ then uniquely determines the stationary solution, as required. 
\end{proof}

\subsection{Uniqueness of strong solutions near constant stationary states}\label{sec:stat fixed point}

Our aim in this section is to prove that, provided a strong solution is sufficiently near a constant stationary state (in a sense to be made precise), then it is the only such solution. We proceed by means of a fixed-point argument employing the Contraction Mapping Theorem, similar to \cite[\S 3.5]{BoundEntropy}, for which we define the spaces 
\begin{equation}\label{eq:space for fixed point}
  \begin{aligned}
      &\Xi_t := L^\infty(t,T;H^2(\Upsilon)) \cap L^2(t,T;H^3(\Upsilon)) \cap H^1(t,T;H^1(\Upsilon)), 
      \\ 
      &\Lambda_t := L^\infty(t,T;H^1(\Upsilon)) \cap L^2(t,T;H^2(\Upsilon)) \cap H^1(t,T;L^2(\Upsilon)), \\ 
      &\Theta_t := L^\infty(t,T;L^2(\Upsilon)) \cap L^2(t,T;H^1(\Upsilon)). 
  \end{aligned} 
\end{equation}
Where no confusion arises, we shall omit the $t$ subscript and simply write $\Xi,\Lambda,\Theta$. The main result of this section is as follows. 

\begin{thm}\label{thm:uniqueness small peclet}

Let $D_e > 0$, $\pe \in \mathbb{R}$, $f_\infty \in (0,1/2\pi)$ be a given constant stationary state, and $f$ be a solution of \eqref{eq: model 4}. Assume there exists $t \in (0,T)$ such that $f(t,\cdot) \in H^2(\Upsilon)$ with 
\begin{equation*}
    \Vert f(t,\cdot) - f_\infty \Vert_{H^2(\Upsilon)} \leq R^2 
\end{equation*}
for $R$ sufficiently small, depending only on $f_\infty,D_e$ and $\pe$. Then, there exists a unique solution of \eqref{eq: model 4} in 
\begin{equation*}
    B_{R,t} := \big\lbrace f \in \Xi : \, \Vert f - f_\infty \Vert_{\Xi_t} \leq R \big\rbrace. 
\end{equation*}
\end{thm}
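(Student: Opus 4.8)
The plan is to prove Theorem~\ref{thm:uniqueness small peclet} by a contraction mapping argument in the ball $B_{R,t}\subset\Xi_t$, linearising \eqref{eq: model 4} around the constant state $f_\infty$. Since $f_\infty\in(0,1/2\pi)$ we have $\rho_\infty=2\pi f_\infty\in(0,1)$, so the coefficients $1-\rho_\infty$ stay bounded away from $0$ and $1$; for $f$ close to $f_\infty$ in $H^2(\Upsilon)$ (which embeds into $C^0(\overline{\Upsilon})$ in dimension three) the same ellipticity holds pointwise. Concretely, write $g:=f-f_\infty$ and rewrite \eqref{eq: model 4} as a linear parabolic problem for $g$ with frozen coefficients plus a remainder that is quadratic in $g$ and its derivatives:
\begin{equation*}
    \partial_t g - D_e(1-\rho_\infty)\Delta g - \partial_\theta^2 g = \mathcal{N}(g),
\end{equation*}
where $\mathcal{N}(g)$ collects the nonlinear/nonlocal contributions $-\pe\,\dv\big((1-\rho)f\e(\theta)\big)+D_e\dv\big((\rho_\infty-\rho)\nabla g + f\nabla\rho\big)$, all of which vanish to second order at $g=0$ (using $\int\nabla\rho = \nabla\rho$, $\rho-\rho_\infty = 2\pi$-average of $g$ in $\theta$, etc.). Define the solution operator $\mathcal{T}: B_{R,t}\to\Xi_t$ by letting $\mathcal{T}(h)$ be the solution of the \emph{linear} problem with right-hand side $\mathcal{N}(h-f_\infty)$ shifted by $f_\infty$ and initial datum $f(t,\cdot)$; maximal parabolic regularity for the constant-coefficient operator $\partial_t - D_e(1-\rho_\infty)\Delta - \partial_\theta^2$ on the periodic cell $\Upsilon$ gives $\mathcal{T}(h)-f_\infty\in\Xi_t$ with the estimate
\begin{equation*}
    \Vert \mathcal{T}(h)-f_\infty\Vert_{\Xi_t} \leq C\big( \Vert f(t,\cdot)-f_\infty\Vert_{H^2(\Upsilon)} + \Vert \mathcal{N}(h-f_\infty)\Vert_{\Lambda_t}\big),
\end{equation*}
with $C$ depending only on $D_e$ and $\rho_\infty$ (hence on $f_\infty$).

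The core of the argument is then the two nonlinear estimates: (i) self-mapping, $\Vert\mathcal{N}(g)\Vert_{\Lambda_t}\leq C(R^2 + |\pe| R)$ for $\Vert g\Vert_{\Xi_t}\leq R$, and (ii) contraction, $\Vert\mathcal{N}(g_1)-\mathcal{N}(g_2)\Vert_{\Lambda_t}\leq C(R + |\pe|)\Vert g_1-g_2\Vert_{\Xi_t}$. These follow from the algebra/multiplication properties of the spaces: $\Xi_t\hookrightarrow L^\infty(t,T;H^2(\Upsilon))\hookrightarrow L^\infty(\overline\Upsilon_t)$ and $H^2(\Upsilon)$ is a Banach algebra in three dimensions, while one spatial derivative of a product $\nabla(ab) = a\nabla b + b\nabla a$ lands in $L^\infty(t,T;H^1)\cap L^2(t,T;H^2)$; the $\theta$-integration defining $\rho$ is a bounded linear map losing no regularity. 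Every term of $\mathcal{N}(g)$ is a product of a smooth bounded factor with either a quadratic expression in $(g,\nabla g, \nabla^2 g)$ or a term linear in $g$ carrying the prefactor $\pe$; carefully tracking which Sobolev index each factor lives in shows the right-hand side lands in $\Lambda_t$ with the claimed bound. Combining with the maximal-regularity estimate and choosing $R$ small enough (so that $C(R+|\pe|)<1$, which — rereading the statement — is only claimed for $R$ small depending on $\pe$, so $|\pe|$ is allowed to be moderate relative to $1/C$; if one wants genuinely arbitrary $\pe$ one must instead exploit smallness of $T-t$ to absorb the drift, and I would remark on this) yields that $\mathcal{T}$ is a contraction on $B_{R,t}$, and the Banach fixed-point theorem produces a unique fixed point, which is the unique solution of \eqref{eq: model 4} in $B_{R,t}$.

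A few subsidiary points need attention. First, one must check that the fixed point actually solves \eqref{eq: model 4} in the strong (pointwise a.e.) sense and coincides with the weak solution of Theorem~\ref{thm:main existence result} on $[t,T]$; this is a standard consequence of the regularity $\Xi_t\subset H^1(t,T;H^1)\cap L^2(t,T;H^3)$, which makes every term in the equation a genuine $L^2(\Upsilon_t)$ function, together with a uniqueness-of-weak-solutions bootstrap. Second, since the theorem posits $f$ itself as a solution with $f(t,\cdot)\in H^2$ and $\Vert f(t,\cdot)-f_\infty\Vert_{H^2}\leq R^2$, one should verify $f\in B_{R,t}$ a posteriori — this is automatic once we know the fixed point is the only solution in $B_{R,t}$ and we have shown (by the self-mapping estimate with $R^2$ as initial bound producing output of size $\lesssim R^2 + R^2 \leq R$ for $R$ small) that $f\in\Xi_t$ with $\Vert f - f_\infty\Vert_{\Xi_t}\leq R$.

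The main obstacle I anticipate is the nonlinear estimate (i)--(ii) for the degenerate cross-diffusion term $D_e\dv(f\nabla\rho)$ and, especially, the drift $\pe\,\dv((1-\rho)f\e(\theta))$: the drift is only first order and does \emph{not} vanish quadratically, so its contribution to $\mathcal{N}$ is $-\pe\,\dv\big((1-\rho_\infty)g\e(\theta) - (\rho-\rho_\infty)f_\infty\e(\theta) - (\rho-\rho_\infty)g\e(\theta)\big)$, whose leading part $-\pe(1-\rho_\infty)\partial_{x}\!\cdot\!(g\e(\theta)) + 2\pi\pe f_\infty \partial_x\!\cdot\!(\langle g\rangle_\theta\e(\theta))$ is linear in $g$ with no smallness beyond the $\pe$ prefactor. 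Controlling this requires that its $\Lambda_t$-norm be bounded by $C|\pe|\,\Vert g\Vert_{\Xi_t}$, which is fine for the contraction constant $C(R+|\pe|)$ only if $|\pe|$ is below the threshold $1/C$; making the argument work for \emph{all} $\pe\in\mathbb R$, as the statement claims, forces one to localise in time and let $T-t$ be small (absorbing $|\pe|(T-t)^{1/2}$ instead of $|\pe|$), and I would structure the final estimate to display this explicitly. The positivity/range constraints $f>0$, $0\le\rho\le1$ near $f_\infty$ also need to be preserved along the iteration, but this follows from the $C^0$-smallness guaranteed by $\Xi_t\hookrightarrow C(\overline\Upsilon_t)$ once $R$ is small.
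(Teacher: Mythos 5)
There is a genuine gap in the decomposition, and it cannot be repaired by the time-localisation you propose.

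You linearise by keeping only the \emph{constant-coefficient heat operator} $\partial_t - D_e(1-\rho_\infty)\Delta - \partial_\theta^2$ on the left and dumping everything else into the source $\mathcal{N}(g)$. You assert at first that all of $\mathcal{N}$ vanishes to second order at $g=0$; this is false, and you catch the problem for the drift but not for the cross-diffusion. The term $D_e\dv(f\nabla\rho)$ contains the piece $D_e f_\infty\Delta W$ (with $W=\rho-\rho_\infty$), which is \emph{linear} in $g$, second order, and carries neither an $R$-small nor a $\pe$ prefactor. In the source norm $\Theta_t = L^\infty(t,T;L^2)\cap L^2(t,T;H^1)$ (note you wrote $\Lambda_t$, which demands strictly more regularity of the source than needed and than you can supply), one needs $\Vert\Delta W\Vert_{L^2(t,T;H^1)}\sim\Vert W\Vert_{L^2(t,T;H^3)}$, which is precisely the top-regularity exponent in $\Xi_t$; it cannot be bounded by $\tau^{1/2}$ times any $L^\infty$-in-time norm available in $\Xi_t$, since $\Xi_t$ only controls $L^\infty(t,T;H^2)$. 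Moreover, the $L^\infty(t,T;L^2)$ piece of $\Theta_t$ does not shrink with the window length $\tau$ for either the cross-diffusion $f_\infty\Delta W$ or the drift $\pe(1-\rho_\infty)\nabla g\cdot\e(\theta)$. Hence, unless one also imposes $f_\infty$ small, the contraction constant of $\mathcal{T}$ retains an $O(1)$ linear contribution, and the fixed-point argument as you set it up fails for generic admissible data.

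The paper's proof sidesteps this by taking a different decomposition: it keeps the \emph{entire} linearisation around $f_\infty$, including both the drift $\pe\big[(1-\rho_\infty)\nabla z - f_\infty\nabla Z\big]\cdot\e(\theta)$ and the cross-diffusion $D_e f_\infty\Delta Z$, inside the linear solution operator $S$; the remainder $G(w) = D_e(w\Delta W - W\Delta w) + \pe(w\nabla W + W\nabla w)\cdot\e(\theta)$ is then genuinely quadratic, so $\Vert G(w_1)-G(w_2)\Vert_\Theta\lesssim\Vert w_1-w_2\Vert_\Xi(\Vert w_1\Vert_\Xi + \Vert w_2\Vert_\Xi)$ and the contraction constant is $O(R)$. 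The price is that $S$ is no longer constant-coefficient, so one must re-derive coercivity: the paper does this by testing against the multiplier $(1-\rho_\infty)\Delta z + f_\infty\Delta Z$, which produces the good term $\tfrac12 D_e\int_\Upsilon|(1-\rho_\infty)\Delta z + f_\infty\Delta Z|^2$ and, together with Gr\"onwall, absorbs the drift into a constant $C_0 = e^{T(1+\pe^2 D_e^{-1})}$ depending on $\pe$ but not on $R$. This is what allows the theorem to hold for \emph{all} $\pe\in\mathbb{R}$ with $R$ depending on $\pe$, rather than only for small $|\pe|$. To make your proof work you would have to move $D_e f_\infty\Delta W$ (and, by the $L^\infty_t L^2$ obstruction above, also the drift) from $\mathcal{N}$ into the linear operator, and then provide the corresponding energy estimates --- in other words, to arrive at essentially the paper's construction of $S$.
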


The proof of Theorem \ref{thm:uniqueness small peclet} entails studying the time-evolution of the error term 
$w := f - f_\infty$, with $W := \rho - \rho_\infty$, which satisfies 
\begin{equation*}
    \begin{aligned}
        \partial_t w - \partial^2_\theta w -D_e \big[ (1-\rho_\infty)\Delta w & + f_\infty \Delta W \big] +\pe \big[(1-\rho_\infty) \nabla w  - f_\infty \nabla W \big]\cdot \e(\theta)  \\ 
        =& D_e \big[ w \Delta W - W\Delta w \big] + \pe \big[w \nabla W + W \nabla w \big] \cdot \e(\theta). 
    \end{aligned}
\end{equation*}
We therefore construct the operator $\Gamma$ to be the solution map $\Gamma : w \mapsto z$, where $z$ solves the linear problem 
\begin{equation}\label{eq:define fixed point}
   \left\lbrace \begin{aligned}
       & \partial_t z - \partial^2_\theta z -D_e \big[ (1-\rho_\infty)&&\Delta z + f_\infty  \Delta Z \big]  +\pe \big[(1-\rho_\infty) \nabla z  - f_\infty \nabla Z \big]\cdot \e(\theta)\\ 
     &   &&= \underbrace{D_e \big( w \Delta W - W\Delta w \big) + \pe \big( w \nabla W + W \nabla w \big) \cdot \e(\theta)}_{=:G(w)}, \\ 
        &z(0,\cdot) = z_0, &&
    \end{aligned}\right. 
\end{equation}
and $Z := \int_0^{2\pi} z \d \theta$; we write $\Gamma = S \circ G$, where $S$ is the solution mapping to the above equation for given right-hand side and initial data $z_0$, and $G$ maps precisely to this desired right-hand source term. Our goal is to show that $\Gamma$ admits a fixed point. 

Note that, provided the above is supplemented with adequate initial data and the right-hand term $G(w)$ is sufficiently regular, the solution map $\Gamma$ is well-defined in a suitable space using a standard application of the Galerkin method/Fourier series. Indeed, writing the ansatz 
\begin{equation*}
    z(t,\bxi) = \sum_{\n \in \mathbb{Z}^3} \alpha_\n(t) e^{\i \n \cdot \bxi}, \quad Z(t,x) = 2\pi \sum_{\n' \in \mathbb{Z}^2} \alpha_{(\n',0)}(t) e^{\i \n' \cdot x}, 
\end{equation*}
where $\n = (n_1,n_2,n_3)$, $\n' = (n_1,n_2)$, the equation \eqref{eq:define fixed point} reads 
\begin{equation}\label{eq:ode fourier for fixed point setup}
    \begin{aligned}
        \alpha_\n'(t) + n_3^2 \alpha_\n(t) + D_e (n_1^2+n_2^2)\big[ (1-\rho_\infty) \alpha_\n(t) &+ \mathds{1}_{\{n_3 = 0\}}f_\infty 2\pi \alpha_{(\n',0)}(t) \big] \\ 
        &+ \pe (1-\rho_\infty) 2 \pi \i n_1 \alpha_\n(t) = \hat{G}_\n(t), 
    \end{aligned}
\end{equation}
where 
\begin{equation*}
    \hat{G}_\n (t) = \frac{1}{(2\pi)^2}\int_0^{2\pi} G(w)(t,\bxi) e^{-\i \n \cdot \bxi} \d \bxi. 
\end{equation*}
It follows that, provided $\hat{G}_\n(w)$ is sufficiently well-behaved, the ODE system \eqref{eq:ode fourier for fixed point setup} is solvable by standard methods for all values of $f_\infty \geq 0$ and $0 \leq \rho_\infty \leq 1$. This existence construction is standard; we therefore deliberately omit further details for clarity of presentation, and henceforth focus exclusively on the \emph{a priori} estimates required for the fixed-point strategy. Our results are encapsulated in the following two lemmas, which will be used to prove that $\Gamma$ admits a unique fixed point.

\begin{lemma}[Estimates for the operator $S$]\label{eq:parabolic estimates for fixed pt}
    Assume $f_\infty \in (0,1/2\pi)$, \textit{i.e.}, $\rho_\infty \in (0,1)$. Then, there exists a positive constant $C_{\infty,D_e} = C_{\infty,D_e}(f_\infty,D_e)$ such that 
    \begin{equation}\label{eq:first est stat states fixed point}
        \begin{aligned}
            &\Vert S v \Vert_{\Theta} \leq C_{\infty,D_e} C_0^{1/2} \Big( \Vert z_0 \Vert_{L^2(\Upsilon)} + \Vert v \Vert_{L^2(\Upsilon_T)} \Big), \\ 
            &\Vert \nabla S v \Vert_\Theta \leq C_{\infty,D_e} C_0 \Big( \Vert \nabla z_0 \Vert_{L^2(\Upsilon)} + \Vert v \Vert_{L^2(\Upsilon_T)} \Big), \\ 
            &\Vert \partial_\theta S v \Vert_\Theta \leq C_{\infty,D_e} C_0^{3/2} \Big( \Vert z_0 \Vert_{L^2(\Upsilon)} + \Vert \partial_\theta z_0 \Vert_{L^2(\Upsilon)} + \Vert v \Vert_{L^2(\Upsilon_T)} \Big), \\ 
             &\Vert \partial_t Sv \Vert_{L^2(\Upsilon_T)} \leq  C_{\infty,D_e} C_1 \Big( \Vert z_0 \Vert_{H^1(\Upsilon)} + \Vert v \Vert_{L^2(\Upsilon_T)} \Big), 
        \end{aligned}
    \end{equation}

    where 
    \begin{equation}\label{eq:fp c0}
  \begin{aligned}
      C_0 = e^{T(1+\pe^2{D_e^{-1}})}, \qquad C_1 = (1+\pe^2 D_e^{-1})^{1/2} C_0^{3/2}, 
  \end{aligned}
\end{equation} 
and 
\begin{equation}\label{eq:main fixed pt est for S}
    \Vert Sv \Vert_\Xi \leq C_{\infty,D_e} C_1 \Big( \Vert z_0 \Vert_{H^2(\Upsilon)} + \Vert v \Vert_{\Theta} \Big). 
\end{equation}
\end{lemma}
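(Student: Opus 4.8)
The plan is to prove the five estimates in the order stated, each by testing the linear parabolic problem \eqref{eq:define fixed point} against an appropriate multiplier and invoking the bounds already established, so that the chain is non-circular; the Fourier construction sketched after \eqref{eq:ode fourier for fixed point setup} provides a solution regular enough for every manipulation below. Two structural facts make all terms close. First, the non-locality enters only through $Z=\int_0^{2\pi} z\,\d\theta$, and for any $g=g(x)$ one has $\int_\Upsilon g\,z\,\d\bxi = \int_\Omega g\,Z\,\d x$, while $\int_0^{2\pi}\Delta z\,\d\theta = \Delta Z$, $\int_0^{2\pi}\partial_t z\,\d\theta = \partial_t Z$, and $\int_0^{2\pi}\partial^2_\theta z\,\d\theta = 0$ by $\theta$-periodicity. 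Second, $\dv_x\e(\theta)=0$, so $\int_\Upsilon(\e(\theta)\cdot\nabla\phi)\phi\,\d\bxi = 0$ for any triply periodic $\phi$, and moreover $\Vert\int_0^{2\pi}\e(\theta)z\,\d\theta\Vert_{L^2(\Omega)}\le\sqrt{2\pi}\,\Vert z\Vert_{L^2(\Upsilon)}$. To obtain the $\Theta$-bound I would test \eqref{eq:define fixed point} with $z$: the diffusive terms $-\partial^2_\theta z$, $-D_e(1-\rho_\infty)\Delta z$, $-D_e f_\infty\Delta Z$ produce, after integration by parts and the identities above, the non-negative dissipation $\Vert\partial_\theta z\Vert^2_{L^2(\Upsilon)} + D_e(1-\rho_\infty)\Vert\nabla z\Vert^2_{L^2(\Upsilon)} + D_e f_\infty\Vert\nabla Z\Vert^2_{L^2(\Omega)}$; the drift term $\pe(1-\rho_\infty)\nabla z\cdot\e(\theta)$ integrates to zero; the term $-\pe f_\infty\nabla Z\cdot\e(\theta)$ is bounded via Young's inequality by $\tfrac{D_e f_\infty}{2}\Vert\nabla Z\Vert^2_{L^2(\Omega)} + \tfrac{\pi\pe^2 f_\infty}{D_e}\Vert z\Vert^2_{L^2(\Upsilon)}$, the first part absorbed; and the source by $\tfrac12\Vert v\Vert^2 + \tfrac12\Vert z\Vert^2$. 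Since $2\pi f_\infty<1$, the coefficient of $\Vert z\Vert^2$ on the right is at most $\tfrac12(1+\pe^2 D_e^{-1})$, so using the integrating factor $e^{-(1+\pe^2 D_e^{-1})t}$ one bounds, simultaneously, $\Vert z\Vert_{L^\infty(t,T;L^2)}$ and the time-integrated dissipation by $C_{\infty,D_e}C_0^{1/2}(\Vert z_0\Vert_{L^2}+\Vert v\Vert_{L^2(\Upsilon_T)})$, which is the first estimate; the time-integrated bounds on $\Vert\nabla z\Vert_{L^2(\Upsilon_T)}$ and $\Vert\nabla Z\Vert_{L^2(\Omega_T)}$ it contains are used below \emph{without} ever differentiating $z_0$.

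Next, testing with $-\Delta z$ yields the dissipation $D_e(1-\rho_\infty)\Vert\Delta z\Vert^2 + D_e f_\infty\Vert\Delta Z\Vert^2_{L^2(\Omega)} + \Vert\nabla\partial_\theta z\Vert^2$ (using $\int_0^{2\pi}\Delta z\,\d\theta=\Delta Z$), the drift and source terms being absorbed into a small multiple of $\Vert\Delta z\Vert^2$ plus quantities already controlled; a Gronwall argument in $\Vert\nabla z\Vert^2$ gives the second estimate. Testing with $-\partial^2_\theta z$ is analogous, except that now $\int_\Upsilon\Delta Z\,\partial^2_\theta z\,\d\bxi = 0$ by $\theta$-periodicity, and after an integration by parts in $\theta$ the term $\int_\Upsilon(\e(\theta)\cdot\nabla\partial_\theta z)\partial_\theta z\,\d\bxi$ vanishes by $\dv_x\e=0$; the remaining drift terms are handled using the time-integrated $\Vert\nabla z\Vert_{L^2(\Upsilon_T)}$- and $\Vert\nabla Z\Vert_{L^2(\Omega_T)}$-bounds from the first step, so only $\Vert z_0\Vert_{L^2}+\Vert\partial_\theta z_0\Vert_{L^2}$ enters on the right, yielding the third estimate. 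Testing with $\partial_t z$ yields $\Vert\partial_t z\Vert^2_{L^2(\Upsilon)}$ together with $\partial_t$ of $\tfrac12\Vert\partial_\theta z\Vert^2 + \tfrac{D_e(1-\rho_\infty)}{2}\Vert\nabla z\Vert^2 + \tfrac{D_e f_\infty}{2}\Vert\nabla Z\Vert^2_{L^2(\Omega)}$; absorbing the drift and source into $\eta\Vert\partial_t z\Vert^2 + C\pe^2 D_e^{-1}(\Vert\nabla z\Vert^2 + \Vert\nabla Z\Vert^2)$ and integrating in time, then inserting the second estimate, gives the fourth. Combining the four produces the intermediate bound $\Vert Sv\Vert_\Lambda\le C_{\infty,D_e}C_1(\Vert z_0\Vert_{H^1}+\Vert v\Vert_{L^2(\Upsilon_T)})$.

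For \eqref{eq:main fixed pt est for S} I would then differentiate \eqref{eq:define fixed point} in each of $x_1,x_2,\theta$ and apply this intermediate $\Lambda$-bound. Differentiating in $x_j$ gives \emph{the same} linear problem for $\partial_{x_j}z$ — the coefficients being constant and $\partial_{x_j}Z = \int_0^{2\pi}\partial_{x_j}z\,\d\theta$ — with source $\partial_{x_j}v\in L^2(\Upsilon_T)$ and datum $\partial_{x_j}z_0\in H^1(\Upsilon)$; differentiating in $\theta$ gives a \emph{local} parabolic problem for $\partial_\theta z$ (since $\int_0^{2\pi}\partial_\theta z\,\d\theta = 0$, so all $Z$-terms disappear) with source $\partial_\theta v$ plus the lower-order terms $-\pe(1-\rho_\infty)\nabla z\cdot\e'(\theta) + \pe f_\infty\nabla Z\cdot\e'(\theta)$, where $\e'(\theta) = (-\sin\theta,\cos\theta)$, which lie in $L^2(\Upsilon_T)$ and are controlled by the second estimate. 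Applying the intermediate $\Lambda$-bound to $\partial_{x_1}z$, $\partial_{x_2}z$, $\partial_\theta z$ and summing delivers the three components of $\Xi$ — $L^\infty(t,T;H^2)$, $L^2(t,T;H^3)$ and $H^1(t,T;H^1)$ — with constant $C_{\infty,D_e}C_1$ and with $\Vert z_0\Vert_{H^2}+\Vert v\Vert_\Theta$ on the right; the $L^2(t,T;H^1)$-part of $\Vert v\Vert_\Theta$ is exactly what is needed to bound $\partial_{x_j}v$ and $\partial_\theta v$.

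The integration-by-parts identities are routine. The points requiring care are keeping the chain non-circular — in particular, handling the drift terms in the $\partial_\theta$- and $\partial_t$-tests through the time-integrated dissipation from the first step rather than through derivatives of $z_0$ — and tracking the exponential constants so that the stated powers $C_0^{1/2}$, $C_0$, $C_0^{3/2}$, $C_1$, $C_1$ emerge, which hinges on the elementary inequality $2\pi f_\infty<1$ and on using integrating factors rather than crude Gronwall. I expect the genuine obstacle, and the only step that is more than bookkeeping, to be the last one: one must check that differentiating in $\theta$ does not destroy the structure, i.e.\ that the commutator terms generated by $\partial_\theta(\e(\theta)\cdot\nabla(\cdot))$ are strictly lower order and absorbable by the bounds already in hand, and that $\Vert v\Vert_\Theta$ — and no stronger norm of $v$ — suffices to close the bootstrap.
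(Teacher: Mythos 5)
Your proposal is correct and follows essentially the same route as the paper: successive energy estimates with the multipliers $z$, a second-order multiplier, $\partial_\theta^2 z$, and a time-derivative bound, closed by Gr\"onwall, followed by differentiating the linear equation in $\bxi$ and re-running the $\Lambda$-estimate to reach the $\Xi$-bound \eqref{eq:main fixed pt est for S}. The only deviations are cosmetic: the paper tests with $(1-\rho_\infty)\Delta z + f_\infty\Delta Z$ rather than $-\Delta z$, obtains the $\partial_t z$ bound by reading it off the equation rather than testing with $\partial_t z$, and uses difference quotients in $\bxi$ rather than differentiating directly; none of these changes the substance or the constants.
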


\begin{lemma}[Estimates for the operator $G$]\label{eq:parabolic estimates for fixed pt ii}
    There exists a positive constant $C_{D_e}$, depending solely on $D_e$ and $\Upsilon$, such that, for all $w_1,w_2 \in \Xi$, there holds 
    \begin{equation}\label{eq:G contractive}
        \Vert G(w_1) - G(w_2) \Vert_\Theta \leq C_{D_e}(1+\pe D_e^{-1}) \Vert w_1 - w_2 \Vert_\Xi \big( \Vert w_1 \Vert_\Xi + \Vert w_2 \Vert_{\Xi} \big), 
    \end{equation}
    and, for all $w \in \Xi$, 
    \begin{equation}\label{eq:self mapping for G}
        \Vert G(w) \Vert_\Theta \leq C_{D_e}(1+\pe D_e^{-1}) \Vert w \Vert_\Xi^2. 
    \end{equation}
\end{lemma}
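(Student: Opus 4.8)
The plan is to reduce both claims to a single bilinear estimate and then prove the latter by direct product bounds. For $a,b\in\Xi$ write $A:=\int_0^{2\pi}a\d\theta$ and $B:=\int_0^{2\pi}b\d\theta$, and introduce the map
\begin{equation*}
  \mathcal{B}(a,b):=D_e\big(a\,\Delta B-A\,\Delta b\big)+\pe\big(a\,\nabla B+A\,\nabla b\big)\cdot\e(\theta),
\end{equation*}
which is linear in each argument (as $a\mapsto A$ is linear) and satisfies $G(w)=\mathcal{B}(w,w)$. By bilinearity, $G(w_1)-G(w_2)=\mathcal{B}(w_1-w_2,w_1)+\mathcal{B}(w_2,w_1-w_2)$, so \eqref{eq:G contractive} and \eqref{eq:self mapping for G} both follow at once from the single estimate
\begin{equation}\label{eq:bilinear-G-plan}
  \Vert\mathcal{B}(a,b)\Vert_\Theta\leq C_{D_e}\big(1+\pe D_e^{-1}\big)\Vert a\Vert_\Xi\Vert b\Vert_\Xi,\qquad a,b\in\Xi,
\end{equation}
which is what I would establish.

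The first thing to record is the elementary but crucial observation that $A$ and $B$ depend on $x$ only, so Jensen's inequality gives $\Vert A\Vert_{H^s(\Omega)}\leq C\Vert a\Vert_{H^s(\Upsilon)}$ for every integer $s\geq0$; in particular $A$ inherits a bound in $L^\infty(t,T;H^2(\Omega))\cap L^2(t,T;H^3(\Omega))$ controlled by $\Vert a\Vert_\Xi$. Since $\Omega$ is two-dimensional, the averaged factors may then be estimated with the generous embeddings $H^2(\Omega)\hookrightarrow L^\infty(\Omega)$ and $H^1(\Omega)\hookrightarrow L^q(\Omega)$ for all $q<\infty$, whereas the non-averaged factors $a,b$ are handled with the three-dimensional embeddings $H^2(\Upsilon)\hookrightarrow L^\infty(\Upsilon)$ (Morrey, as already used in the paper) and $H^1(\Upsilon)\hookrightarrow L^6(\Upsilon)$. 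For the $L^\infty(t,T;L^2(\Upsilon))$-part of the $\Theta$-norm in \eqref{eq:bilinear-G-plan}, I would bound each of the four products in $\mathcal{B}(a,b)$ by placing the lower-order factor in $L^\infty_x$ and the higher-order factor in $L^2_x$: for instance $\Vert a\,\Delta B\Vert_{L^2(\Upsilon)}\leq\Vert a\Vert_{L^\infty(\Upsilon)}\Vert\Delta B\Vert_{L^2(\Upsilon)}\leq C\Vert a\Vert_{H^2(\Upsilon)}\Vert b\Vert_{H^2(\Upsilon)}$, and analogously for $A\,\Delta b$, $a\,\nabla B$ and $A\,\nabla b$; taking the essential supremum in time yields the factor $\Vert a\Vert_\Xi\Vert b\Vert_\Xi$.

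The bulk of the work is the $L^2(t,T;H^1(\Upsilon))$-part. Here one differentiates $\mathcal{B}(a,b)$ in $\bxi=(x,\theta)$ — noting that $\partial_\theta$ annihilates $A$, $B$ and their $x$-derivatives, hitting only $a$, $b$ and $\e(\theta)$ (the latter producing the harmless factor $\e'(\theta)$), while $\partial_x$ generates Leibniz terms — and sorts the resulting terms into two families. The lower-order family ($\nabla a\,\Delta B$, $\nabla A\,\Delta b$, $\partial_\theta a\,\nabla B\cdot\e$, and the analogous first-order terms) I would control by Hölder with the split $\tfrac16+\tfrac13=\tfrac12$: combining $L^\infty_tL^6(\Upsilon)$-bounds on $\nabla a,\partial_\theta a$ with $L^2_tL^3$-bounds on $\Delta B,\nabla B$ (valid since $B\in L^2_tH^3(\Omega)\hookrightarrow L^2_tW^{2,q}(\Omega)$ for $q<\infty$), and $L^\infty_tL^q(\Omega)$-bounds on $\nabla A$ with $L^2_tL^6(\Upsilon)$-bounds on $\Delta b,\nabla b$. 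The genuinely delicate terms are the top-order ones $a\,\nabla\Delta B$ and $A\,\nabla\Delta b$: for these one uses that $a\in L^\infty_tL^\infty(\Upsilon)$ and $A\in L^\infty_tL^\infty(\Omega)$ (the $H^2$-embedding) together with $\nabla\Delta B,\nabla\Delta b\in L^2_tL^2$, which holds \emph{precisely because} $b\in L^2(t,T;H^3(\Upsilon))$ — exactly the regularity built into $\Xi$; this gives $\Vert a\,\nabla\Delta B\Vert_{L^2_tL^2}\leq\Vert a\Vert_{L^\infty_tL^\infty}\Vert\nabla\Delta B\Vert_{L^2_tL^2}\leq C\Vert a\Vert_\Xi\Vert b\Vert_\Xi$, and likewise for $A\,\nabla\Delta b$. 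Summing all contributions, the $D_e$-prefactors produce a constant $C_{D_e}$ and the $\pe$-terms a constant $C_\Upsilon\pe=C_{D_e}\pe D_e^{-1}$, which gives \eqref{eq:bilinear-G-plan} and hence Lemma~\ref{eq:parabolic estimates for fixed pt ii}.

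I expect the only point requiring real care — as opposed to routine bookkeeping — to be the $H^1(\Upsilon)$-estimate, where one must simultaneously track which factor carries the $\theta$-average (hence is governed by the more favourable two-dimensional embeddings) and which does not, and make sure that the top-order terms are absorbed by the $L^\infty$-control from $H^2(\Upsilon)$ and the third-order control from $L^2(t,T;H^3(\Upsilon))$. Everything else — the $L^\infty_tL^2$-part, the $\theta$-derivatives, and the $\theta$-dependence of $\e(\theta)$ — is straightforward Hölder and Sobolev.
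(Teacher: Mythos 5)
Your proposal is correct and follows essentially the same route as the paper: expanding the difference into cross terms (your bilinear splitting is the same algebra as the paper's term-by-term decomposition, with the self-map bound following from $G(0)=0$ there and from $G(w)=\mathcal{B}(w,w)$ here) and estimating each product via Morrey's embedding $H^2(\Upsilon)\hookrightarrow L^\infty(\Upsilon)$ together with the $L^2(t,T;H^3)$ control of the top-order terms. The only cosmetic difference is in the $L^2_tH^1$ part, where you pair factors as $L^\infty_t L^6\times L^2_t L^3$ while the paper uses $L^2_t L^\infty\times L^\infty_t L^2$ (via $\nabla w\in L^2_t H^2\hookrightarrow L^2_t L^\infty$); both are valid and yield the stated constant structure.
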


Before proving Lemmas \ref{eq:parabolic estimates for fixed pt} and \ref{eq:parabolic estimates for fixed pt ii}, we provide the proof of Theorem \ref{thm:uniqueness small peclet}. 

\begin{proof}[Proof of Theorem \ref{thm:uniqueness small peclet}]

Suppose for the time being that $T \leq 1$. This assumption will be removed in the third step of the proof. 

\noindent 1. \textit{$\Gamma$ is self-mapping}: Suppose $f \in B_R$. Using the estimates of Lemmas \ref{eq:parabolic estimates for fixed pt} and \ref{eq:parabolic estimates for fixed pt ii}, we find that 
\begin{equation*}
    \Vert \Gamma w \Vert_\Xi \leq C_{\infty,D_e} C_1 \Big( 1 + C_{D_e}(1+\pe D_e^{-1}) \Big)R^2. 
\end{equation*}
Since we have restricted $T \leq 1$, it follows that there exists $R$ sufficiently small, depending only on $D_e,f_\infty,\pe$ such that there holds 
\begin{equation*}
    \Vert \Gamma w \Vert_\Xi \leq R, 
\end{equation*}
whence $\Gamma$ is self-mapping from $B_R$ to itself. 

\noindent 2. \textit{$\Gamma$ is contractive}: Let $w_1,w_2 \in B_R$. Observe that the linearity of $S$ implies $\Gamma w_1 - \Gamma w_2 = S(Gw_1 - Gw_2)$, where the initial data $z_0$ in \eqref{eq:define fixed point} is null. It follows from Lemmas \ref{eq:parabolic estimates for fixed pt} and \ref{eq:parabolic estimates for fixed pt ii} that there holds 
\begin{equation*}
    \Vert \Gamma w_1 - \Gamma w_2 \Vert_\Xi \leq C_{\infty,D_e} C_1 C_{D_e}(1+\pe D_e^{-1})R \Vert w_1 - w_2 \Vert_\Xi. 
\end{equation*}
As per the first part of the proof, it follows that one may select $R$ sufficiently small, depending only on $f_\infty,D_e,\pe$ such that $\Gamma$ is a contraction. An application of Banach's Contraction Mapping Theorem implies that there exists a unique $f$ solving \eqref{eq: model 4} and such that $\Vert f - f_\infty \Vert_\Xi$ over the time interval $t \in (0,1)$. 

\noindent 3. \textit{Extension to arbitrary time intervals}: We apply Step 2 of the proof repeatedly over overlapping time intervals $(\alpha_j,\beta_j)$ for Lebesgue points $\alpha_j,\beta_j$ chosen such that $\beta_{j-1}>\alpha_j$ for all $j$ and such that $\bigcup_j (\alpha_j,\beta_j) \supset (t,T)$. In turn, obtain that there exists a unique $f \in B_R$ over the entire time interval $(0,T)$. 
\end{proof}

\begin{proof}[Proof of Lemma \ref{eq:parabolic estimates for fixed pt}]
    By testing \eqref{eq:define fixed point} with $z$, we obtain the classical parabolic estimate 
\begin{equation*}
    \begin{aligned}
        \frac{1}{2}\frac{\de}{\de t}\int_\Upsilon |z|^2 \d \bxi + \int_\Upsilon & |\partial_\theta z|^2 \d \bxi + D_e \Big( (1-\rho_\infty) \int_\Upsilon |\nabla z |^2 \d \bxi + f_\infty \int_\Omega |\nabla Z|^2 \d x \Big) \\ 
        \leq & \pe \int_\Upsilon \big[  (1-\rho_\infty) |z| |\nabla z| +  f_\infty | z| |\nabla Z| \big] \d \bxi + \int_\Upsilon G(w) z \d \bxi \\ 
        \leq& \frac{1}{2}D_e(1-\rho_\infty) \int_\Upsilon |\nabla z |^2 \d \bxi + \frac{1}{2}D_e f_\infty\int_\Omega |\nabla Z|^2 \d x \\ 
        &+ \frac{1}{2}\big((1-\rho_\infty+ 2\pi f_\infty)\pe^2{D_e^{-1}} + 1\big)\int_\Upsilon |z|^2 \d \bxi + \frac{1}{2}\int_\Upsilon |G(w)|^2 \d \bxi 
    \end{aligned}
\end{equation*}
for some positive universal constant $C$, from which we get, for a.e.~$t \in (0,T)$, 
\begin{equation*}
    \begin{aligned}
        \int_\Upsilon |z(t)|^2 \d \bxi +& \int_0^t\int_\Upsilon |\partial_\theta z|^2 \d \bxi \d \tau + D_e \Big( (1-\rho_\infty) \int_0^t \int_\Upsilon |\nabla z |^2 \d \bxi \d \tau + f_\infty \int_0^t \int_\Omega |\nabla Z|^2 \d x \d \tau \Big) \\ 
        \leq & (1+\pe^2{D_e^{-1}})\int_0^t\int_\Upsilon |z|^2 \d \bxi \d \tau + \Big(\int_\Upsilon |z_0|^2 \d \bxi + \int_0^t 
 \int_{\Upsilon} |G(w)|^2 \d \bxi \d \tau \Big), 
    \end{aligned}
\end{equation*}
and Gr\"onwall's Lemma yields 
\begin{equation*}
    \begin{aligned}
        \int_\Upsilon |z(t)|^2 \d \bxi 
        \leq & \Big(\int_\Upsilon |z_0|^2 \d \bxi + \int_0^t 
 \int_{\Upsilon} |G(w)|^2 \d \bxi \d \tau \Big)e^{t(1+\pe^2{D_e^{-1}})}, 
    \end{aligned}
\end{equation*}
so that 
\begin{equation*}
    \begin{aligned}
       \int_0^T \int_\Upsilon |z(t)|^2 \d \bxi \d t 
        \leq & \Big(\Vert z_0 \Vert_{L^2(\Upsilon)}^2 + \Vert G(w) \Vert_{L^2(\Upsilon_T)}^2  \Big) \frac{1}{(1+\pe^2{D_e^{-1}})}\big(e^{T(1+\pe^2{D_e^{-1}})}-1 \big), 
    \end{aligned}
\end{equation*}
whence 
\begin{equation}\label{eq:to get first fp est}
    \begin{aligned}
       \Vert z \Vert_{L^\infty(0,T;L^2(\Upsilon))}^2 + \Vert \partial_\theta z \Vert_{L^2(\Upsilon_T)}^2 + D_e \Big( (1-\rho_\infty) \Vert & \nabla z \Vert_{L^2(\Upsilon_T)}^2 + f_\infty \Vert \nabla Z \Vert_{L^2(\Omega_T)}^2 \Big) \\ 
        \leq & C_0 \Big(\Vert z_0 \Vert_{L^2(\Upsilon)}^2 + \Vert G(w) \Vert_{L^2(\Upsilon_T)}^2 \Big), 
    \end{aligned}
\end{equation}
and the first estimate in \eqref{eq:first est stat states fixed point} follows, where $C_0 = C_0(\pe,D_e,T)$ is independent of $f_\infty,z_0$ and is given by \eqref{eq:fp c0}.

Similarly, by testing \eqref{eq:define fixed point} with $(1-\rho_\infty)\Delta z + f_\infty \Delta Z$, we get 
\begin{equation*}
    \begin{aligned}
     \frac{1}{2} \frac{\de}{\de t}\Big((1-\rho_\infty) \int_\Upsilon & |\nabla z|^2 \d \bxi + f_\infty \int_\Omega |\nabla Z|^2 \d x  \Big) \\ 
      +& \frac{1}{2}D_e \int_\Upsilon \big| (1-\rho_\infty)\Delta z + f_\infty \Delta Z \big|^2 \d \bxi  
      + (1-\rho_\infty) \int_\Upsilon |\nabla \partial_\theta z|^2 \d \bxi \\ 
      &\leq \frac{1}{2}\pe^2 D_e^{-1} \int_\Upsilon |(1-\rho_\infty) \nabla z - f_\infty \nabla Z|^2 \d \bxi + \frac{1}{2} D_e^{-1}\int_\Upsilon |G(w)|^2 \d \bxi. 
    \end{aligned}
\end{equation*}
It follows that 
\begin{equation*}
    \begin{aligned}
      \Big((1-&\rho_\infty) \int_\Upsilon |\nabla z|^2 \d \bxi + f_\infty \int_\Omega |\nabla Z|^2 \d x  \Big)(t) \\ 
      &+ D_e \int_0^t \int_\Upsilon \big| (1-\rho_\infty)\Delta z + f_\infty \Delta Z \big|^2 \d \bxi \d \tau 
      + (1-\rho_\infty) \int_0^t \int_\Upsilon |\nabla \partial_\theta z|^2 \d \bxi \d \tau \\ 
      \leq& \Big((1-\rho_\infty) \int_\Upsilon |\nabla z_0|^2 \d \bxi + f_\infty \int_\Omega |\nabla Z_0|^2 \d x  \Big) + D_e^{-1} \int_0^t \int_\Upsilon |G(w)|^2 \d \bxi \d \tau \\ 
      &+\pe^2 D_e^{-1} \Big((1-\rho_\infty)^2 \int_0^t \int_\Upsilon |\nabla z|^2 \d \bxi \d \tau + (2(1-\rho_\infty)f_\infty + 2\pi f_\infty^2) \int_0^t \int_\Omega |\nabla Z|^2 \d x \d \tau \Big), 
    \end{aligned}
\end{equation*}
whence, since $2\pi f_\infty = \rho_\infty \in [0,1]$, we get 
\begin{equation*}
    \begin{aligned}
      \Big((1-\rho_\infty) & \int_\Upsilon |\nabla z|^2 \d \bxi + f_\infty \int_\Omega |\nabla Z|^2 \d x  \Big)(t) \\ 
      &+ D_e \int_0^t \int_\Upsilon \big| (1-\rho_\infty)\Delta z + f_\infty \Delta Z \big|^2 \d \bxi \d \tau 
      + (1-\rho_\infty) \int_0^t \int_\Upsilon |\nabla \partial_\theta z|^2 \d \bxi \d \tau \\ 
      \leq& \Big((1-\rho_\infty) \int_\Upsilon |\nabla z_0|^2 \d \bxi + f_\infty \int_\Omega |\nabla Z_0|^2 \d x  \Big) + D_e^{-1} \int_0^t \int_\Upsilon |G(w)|^2 \d \bxi \d \tau \\ 
      &+2\pe^2 D_e^{-1} \Big((1-\rho_\infty) \int_0^t \int_\Upsilon |\nabla z|^2 \d \bxi \d \tau + f_\infty \int_0^t \int_\Omega |\nabla Z|^2 \d x \d \tau \Big). 
    \end{aligned}
\end{equation*}
Gr\"onwall's Lemma yields, for a.e.~$t \in (0,T)$, 
\begin{equation*}
    \begin{aligned}
         \Big(&(1-\rho_\infty) \int_\Upsilon |\nabla z|^2 \d \bxi + f_\infty \int_\Omega |\nabla Z|^2 \d x  \Big)(t) \\ 
         &\leq \Big[  (1-\rho_\infty) \Vert \nabla z_0 \Vert_{L^2(\Upsilon)}^2 + f_\infty \Vert \nabla Z_0\Vert_{L^2(\Omega)}^2 + D_e^{-1} \Vert G(w)\Vert_{L^2((0,t)\times\Upsilon)}^2 \Big] e^{t 2\pe^2 D_e^{-1}}. 
    \end{aligned}
\end{equation*}
We therefore obtain 
\begin{equation*}
    \begin{aligned}
     (1-\rho_\infty) \Vert \nabla z \Vert_{L^\infty(0,T;L^2(\Upsilon))}^2 & + f_\infty \Vert \nabla Z\Vert_{L^\infty(0,T;L^2(\Omega))}^2 \\ 
      + D_e \Vert (1-\rho_\infty)\Delta z & + f_\infty \Delta Z \Vert_{L^2(\Upsilon_T)}^2
      + (1-\rho_\infty) \Vert \nabla \partial_\theta z\Vert^2_{L^2(\Upsilon_T)} \\ 
      \leq C_0^2 \Big( (1-\rho_\infty) & \Vert \nabla z_0 \Vert_{L^2(\Upsilon)}^2 + f_\infty \Vert \nabla Z_0\Vert_{L^2(\Omega)}^2 + D_e^{-1}\Vert G(w)\Vert_{L^2(\Upsilon_T)}^2 \Big). 
    \end{aligned}
\end{equation*}
Note in particular that the Fubini--Tonelli Theorem implies 
\begin{equation*}
    \begin{aligned}
        \Vert (1-\rho_\infty)\Delta z + f_\infty \Delta Z \Vert_{L^2(\Upsilon_T)}^2 = (1-\rho_\infty)^2 \Vert \Delta z \Vert^2_{L^2(\Upsilon_T)} + f_\infty (2(1-\rho_\infty) + 2\pi f_\infty) \Vert \Delta Z \Vert^2_{L^2(\Omega_T)}, 
    \end{aligned}
\end{equation*}
whence the previous estimate yields 
\begin{equation}\label{eq:second bit for fp}
    \begin{aligned}
     (1-\rho_\infty) \Vert \nabla z \Vert_{L^\infty(0,T;L^2(\Upsilon))}^2 & + f_\infty \Vert \nabla Z\Vert_{L^\infty(0,T;L^2(\Omega))}^2  + (1-\rho_\infty) \Vert \nabla \partial_\theta z\Vert^2_{L^2(\Upsilon_T)} \\ 
      + D_e \Big( (1-\rho_\infty)^2& \Vert \Delta z \Vert^2_{L^2(\Upsilon_T)} + f_\infty (2-\rho_\infty) \Vert \Delta Z \Vert^2_{L^2(\Omega_T)} \Big) \\ 
      \leq C_0^2 \Big( & (1-\rho_\infty) \Vert \nabla z_0 \Vert_{L^2(\Upsilon)}^2 + f_\infty \Vert \nabla Z_0\Vert_{L^2(\Omega)}^2 + D_e^{-1}\Vert G(w)\Vert_{L^2(\Upsilon_T)}^2 \Big), 
    \end{aligned}
\end{equation}
and the second estimate in \eqref{eq:first est stat states fixed point} follows. Similarly, we test \eqref{eq:define fixed point} against $\partial^2_\theta z$ to obtain 
\begin{equation*}
    \begin{aligned}
        \frac{1}{2}\frac{\de}{\de t}\int_{\Upsilon} |\partial_\theta z|^2 \d \bxi + & \int_{\Upsilon} |\partial^2_\theta z| \d \bxi + D_e(1-\rho_\infty)\int_{\Upsilon} |\nabla \partial_\theta z |^2 \d \bxi \\ 
        = &\pe  \int_{\Upsilon} \big[ (1-\rho_\infty)\nabla z - f_\infty \nabla Z] \cdot \e(\theta) \partial^2_\theta z \d \bxi - \int_{\Upsilon} G(w) \partial^2_\theta z \d \bxi \\ 
        =&-\pe (1-\rho_\infty) \int_{\Upsilon} \nabla \partial_\theta z \cdot \e(\theta) \partial_\theta z \d \bxi \\ 
        &-\pe \int_{\Upsilon} \big[ (1-\rho_\infty)\nabla z - f_\infty \nabla Z \big] \cdot \e'(\theta) \partial_\theta z \d \bxi - \int_\Upsilon G(w) \partial^2_\theta z \d \bxi.  
    \end{aligned}
\end{equation*}
Using $\e'(\theta) = (-\sin\theta,\cos\theta)$ and the Young inequality, we get 
\begin{equation*}
        \begin{aligned}
        \frac{1}{2}\frac{\de}{\de t}\int_{\Upsilon} |\partial_\theta z|^2 \d \bxi &+ \int_{\Upsilon} |\partial^2_\theta z| \d \bxi + D_e(1-\rho_\infty)\int_{\Upsilon} |\nabla \partial_\theta z |^2 \d \bxi \\ 
        \leq&\pe (1-\rho_\infty) \int_{\Upsilon} |\nabla \partial_\theta z| |\partial_\theta z |\d \bxi + \pe (1-\rho_\infty) \int_{\Upsilon} |\nabla z| |\partial_\theta z| \d \bxi \\ 
        &+\pe f_\infty \int_{\Upsilon} |\nabla Z| |\partial_\theta z| \d \bxi + \int_\Upsilon |G(w)| |\partial^2_\theta z| \d \bxi.  
    \end{aligned}
\end{equation*}
We obtain, for a.e.~$t \in (0,T)$, 
\begin{equation*}
    \begin{aligned}
        \Vert\partial_\theta z\Vert_{L^\infty(0,t;L^2(\Upsilon))}^2 +\Vert \partial^2_\theta z &\Vert^2_{L^2(\Upsilon_t)} + D_e(1-\rho_\infty)\int_{\Upsilon_t} |\nabla \partial_\theta z |^2 \d \bxi \\ 
        \leq &2\pe^2 D_e^{-1} (1-\rho_\infty) \Vert\partial_\theta z\Vert^2_{L^2(\Upsilon_t)} + D_e (1-\rho_\infty) \Vert\nabla z \Vert^2_{L^2(\Upsilon_T)} \\ 
        &+ \Vert \partial_\theta z_0 \Vert^2_{L^2(\Upsilon)} + \Vert G(w) \Vert^2_{L^2(\Upsilon_t)} \\ 
       \leq& 2\pe^2 D_e^{-1} \Vert\partial_\theta z\Vert^2_{L^2(\Upsilon_t)} \\ 
       &+ C_0 \Big(\Vert z_0 \Vert_{L^2(\Upsilon)}^2 + \Vert \partial_\theta z_0 \Vert^2_{L^2(\Upsilon)} + 2\Vert G(w) \Vert^2_{L^2(\Upsilon_t)}\Big), 
    \end{aligned}
\end{equation*}
where we used $\rho_\infty \in [0,1]$ and estimate \eqref{eq:to get first fp est} to obtain the final line. Gr\"onwall's Lemma yields 
\begin{equation}\label{eq:third bit for fp}
    \begin{aligned}
       \Vert\partial_\theta z\Vert_{L^\infty(0,T;L^2(\Upsilon))}^2 + \Vert \partial^2_\theta z \Vert^2_{L^2(\Upsilon_T)} + & D_e(1-\rho_\infty) \Vert \nabla \partial_\theta z \Vert_{L^2(\Upsilon_T)}^2 \\ 
        \leq & C_0^3 \Big( \Vert z_0 \Vert_{L^2(\Upsilon)}^2 + \Vert \partial_\theta z_0 \Vert^2_{L^2(\Upsilon)} + 2\Vert G(w) \Vert_{L^2(\Upsilon_T)}^2 \Big), 
    \end{aligned}
\end{equation}
and the third estimate in \eqref{eq:first est stat states fixed point} follows. It follows, using directly the equation \eqref{eq:define fixed point}, that there holds, for some positive universal constant $C$, the estimate on the time derivative: 
\begin{equation*}
    \begin{aligned}
        \Vert \partial_t z \Vert_{L^2(\Upsilon_T)} \leq C\Big(& \Vert \partial^2_\theta z \Vert_{L^2(\Upsilon_T)}^2 + D_e^2 (1-\rho_\infty)^2 \Vert \Delta z \Vert_{L^2(\Upsilon_T)}^2 + D_e^2 f_\infty^2 \Vert \Delta Z \Vert_{L^2(\Upsilon_T)}^2 \\ 
        &+ \pe^2 (1-\rho_\infty)^2 \Vert \nabla z \Vert_{L^2(\Upsilon_T)}^2 + \pe^2 f_\infty^2 \Vert \nabla Z \Vert_{L^2(\Upsilon_T)}^2 + \Vert G(w) \Vert_{L^2(\Upsilon_T)}^2 \Big) 
    \end{aligned}
\end{equation*}
and thus, using \eqref{eq:to get first fp est}, \eqref{eq:second bit for fp}, \eqref{eq:third bit for fp}, 
\begin{equation*}
    \begin{aligned}
        \Vert \partial_t z \Vert_{L^2(\Upsilon_T)}^2 \leq &  C_{\infty,D_e}(1+\pe^2 D_e^{-1}) C_0^3 \Big( \Vert z_0 \Vert_{H^1(\Upsilon)}^2 + \Vert G(w) \Vert_{L^2(\Upsilon_T)}^2 \Big). 
    \end{aligned}
\end{equation*}
Thus, there holds 
\begin{equation*}
    \Vert z \Vert_\Lambda \leq C_{\infty,D_e}(1+\pe^2 D_e^{-1})^{1/2} C_0^{3/2} \Big( \Vert z_0 \Vert_{H^1(\Upsilon)} + \Vert G(w) \Vert_{L^2(\Upsilon_T)} \Big). 
\end{equation*}

Finally, using the linearity of the equation to introduce difference quotients with respect to the space-angle variable $\bxi$, we reproduce the strategy required to obtain \eqref{eq:first est stat states fixed point} and get 
\begin{equation*}
    \Vert \nabla_{\bxi} z \Vert_\Lambda \leq C_{\infty,D_e} C_0^{3/2} \Big( \Vert \nabla_{\bxi} z_0 \Vert_{H^1(\Upsilon)} + \Vert \nabla_{\bxi} G(w) \Vert_{L^2(\Upsilon_T)} \Big). 
\end{equation*}
It follows that $\Vert z \Vert_{\Xi}$ is controlled by $\Vert z_0\Vert_{H^2(\Upsilon)} + \Vert G(w) \Vert_{L^2(0,T;H^1(\Upsilon))}$ as per the previous inequality, and the final estimate \eqref{eq:main fixed pt est for S} follows. 
\end{proof}

\begin{proof}[Proof of Lemma \ref{eq:parabolic estimates for fixed pt ii}]
    
    We focus on the contractivity of $G$. Direct computation yields 
\begin{equation}\label{eq:L2 exp G contractive}
   \begin{aligned}
       \Vert G(w_1)- G(w_2) \Vert_{L^2(\Upsilon_T)} = D_e \Big(& \Vert (w_1- w_2) \Delta W_1 \Vert_{L^2(\Upsilon_T)} + \Vert w_2 \Delta (W_1-W_2) \Vert_{L^2(\Upsilon_T)} \\ 
       &+ \Vert W_1 \Delta (w_1 - w_2) \Vert_{L^2(\Upsilon_T)} + \Vert (W_1-W_2) \Delta w_2 \Vert_{L^2(\Upsilon_T)} \\ 
       +\pe \Big(&\Vert (w_1-w_2) \nabla W_1 \Vert_{L^2(\Upsilon_T)} + \Vert w_2 \nabla (W_1-W_2) \Vert_{L^2(\Upsilon_T)} \\ 
       &+ \Vert W_1 \nabla (w_1-w_2) \Vert_{L^2(\Upsilon_T)} + \Vert (W_1-W_2) \nabla w_2 \Vert_{L^2(\Upsilon_T)} \Big). 
   \end{aligned}
\end{equation}
Using Morrey's embedding $H^2(\Upsilon) \hookrightarrow L^\infty(\Upsilon)$ for the bounded open domain $\Upsilon \subset \mathbb{R}^3$, there exists a positive constant $C_M$ depending only on $\Upsilon$ such that 
\begin{equation}\label{eq:apply morrey}
    \Vert w_i(t,\cdot) \Vert_{L^\infty(\Upsilon)} \leq C_M \Vert w_i(t,\cdot) \Vert_{H^2(\Upsilon)}, 
\end{equation}
whence 
\begin{equation*}
    \begin{aligned}
        \Vert w_i \Vert_{L^\infty(\Upsilon_T)} \leq C_M \Vert w_i \Vert_\Xi. 
    \end{aligned}
\end{equation*}
It follows that, for instance, 
\begin{equation*}
    \begin{aligned}
        \Vert (w_i - w_j) \Delta W_i \Vert_{L^2(\Upsilon_T)} \leq 2\pi C_M \Vert w_i - w_j \Vert_\Xi \Vert w_i \Vert_{L^2(0,T;H^2(\Upsilon))} \leq 2\pi C_M \Vert w_i - w_j \Vert_\Xi \Vert w_i \Vert_{\Xi}, 
    \end{aligned}
\end{equation*}
and thus, arguing similarly for the other terms, it follows from \eqref{eq:L2 exp G contractive} that 
\begin{equation*}
    \Vert G(w_1)- G(w_2) \Vert_{L^2(\Upsilon_T)} \leq 4 \pi C_M D_e (1+ \pe D_e^{-1} ) \Vert w_1 - w_2 \Vert_\Xi \big( \Vert w_1 \Vert_\Xi + \Vert w_2 \Vert_{\Xi} \big). 
\end{equation*}
The same strategy yields an analogous estimate for $ \Vert G(w_1)- G(w_2) \Vert_{L^\infty(0,T;L^2(\Upsilon))}$.

Observe that 
\begin{equation*}
   \begin{aligned}
       \nabla_{\bxi} G(w) = &D_e \big( w \nabla_{\bxi} \Delta W + \nabla_{\bxi} w \Delta W - \nabla_{\bxi} W \Delta w - W \nabla_{\bxi} \Delta w  \big) \\ 
       &+ \pe \big( w \nabla_{\bxi} \Delta W + \nabla_{\bxi} w \Delta W + \nabla_{\bxi} W \Delta w + W \nabla_{\bxi} \Delta w  \big) \cdot \e (\theta) \\ 
       &+ \pe \big( w \Delta W + W \Delta w \big) \cdot \e'(\theta), 
   \end{aligned} 
\end{equation*}
and, for $w \in \Xi$, arguing as per \eqref{eq:apply morrey}, Morrey's embedding implies $\nabla w \in L^2(0,T;L^\infty(\Upsilon))$; furthermore this latter norm is controlled by the norm of $\Xi$. We also have $\Delta w \in L^\infty(0,T;L^2(\Upsilon))$ and $\nabla \Delta w \in L^2(\Upsilon_T)$, and analogously for the quantities involving $W$; furthermore, all of these norms are all controlled by $\Vert w \Vert_\Xi$. Thus, by arguing as per \eqref{eq:L2 exp G contractive} we obtain 
\begin{equation*}
    \Vert \nabla_{\bxi} G(w_1) - \nabla_{\bxi} G(w_2) \Vert_{L^2(\Upsilon_T)} \leq 8 \pi C_M D_e (1+\pe D_e^{-1} ) \Vert w_1 - w_2 \Vert_\Xi \big( \Vert w_1 \Vert_\Xi + \Vert w_2 \Vert_{\Xi} \big), 
\end{equation*}
and \eqref{eq:G contractive} follows. Finally, since $G(0)=0$, the previous estimate yields \eqref{eq:self mapping for G}. 
\end{proof}

\appendix

\section{Interpolation Lemma}\label{sec:appendix interpolation}

In the present appendix, we derive a variant of a classical interpolation estimate for spaces of functions depending on time and space; our function spaces include dependence on time, space, and angle. 

\begin{lemma}\label{lem:our dibenedetto}
Let $m,p \geq 1$ and define the Banach spaces 
\begin{equation*}
    \begin{aligned}
        &V^{m,p} := L^\infty(\Omega_T;L^m(0,2\pi))) \cap L^p(\Omega_T;W^{1,p}(0,2\pi)), \\ 
        & V^{m,p}_0 := L^\infty(\Omega_T;L^m(0,2\pi)) \cap L^p(\Omega_T;W^{1,p}_0(0,2\pi)), 
    \end{aligned}
\end{equation*}
both equipped with the norm 
\begin{equation*}
    \Vert v \Vert_{V^{m,p}} := \esssup_{(t,x) \in \Omega_T} \Vert v(t,x,\cdot) \Vert_{L^m(0,2\pi)} + \Vert \partial_\theta v \Vert_{L^p(\Upsilon_T)}. 
\end{equation*}
There exists a positive constant $C$ depending only on $p,m,\Upsilon,T$ such that, for all $v \in V^{m,p}$, there holds 
\begin{equation*}
    \Vert v \Vert_{L^q(\Upsilon_T)} \leq C \Vert v \Vert_{V^{m,p}}, 
\end{equation*}
where $q = p (m+1)$. 
\end{lemma}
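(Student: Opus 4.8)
The strategy is to adapt the classical Gagliardo–Nirenberg-type interpolation argument (as in \cite[\S 1, Proposition 3.2]{DiBenedetto}) to the present anisotropic setting, where the Sobolev-type control is only in the angular variable $\theta \in (0,2\pi)$, while in $(t,x)$ we merely have an $L^\infty_{t,x}L^m_\theta$ bound. First I would reduce to the case of smooth $v$ by density, and recall the one-dimensional Gagliardo–Nirenberg inequality on the interval: for fixed $(t,x)$ and a suitable exponent, $\Vert v(t,x,\cdot)\Vert_{L^q(0,2\pi)}$ is controlled by an interpolation of $\Vert \partial_\theta v(t,x,\cdot)\Vert_{L^p(0,2\pi)}$ (or rather $\Vert v(t,x,\cdot)\Vert_{W^{1,p}(0,2\pi)}$) and $\Vert v(t,x,\cdot)\Vert_{L^m(0,2\pi)}$. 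The key point is to pick the exponent so that, after raising to the power $q = p(m+1)$ and integrating over $\Omega_T$, the right-hand side factors cleanly into the $L^\infty_{t,x}L^m_\theta$ norm (taken out in $L^\infty$) and the $L^p(\Upsilon_T)$ norm of $\partial_\theta v$ (which appears to exactly the power $p$, hence integrates to give the full $L^p(\Upsilon_T)$ norm).

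\textbf{Key steps in order.} (1) By density, assume $v \in C^\infty$; the general case follows by approximation since the inequality is an a priori bound and both sides are continuous in the relevant norms. (2) Fix $(t,x) \in \Omega_T$ and apply the one-dimensional interpolation inequality on $(0,2\pi)$: writing $\theta$-norms for brevity, one has $\Vert v \Vert_{L^q(0,2\pi)} \le C \Vert v \Vert_{W^{1,p}(0,2\pi)}^{a} \Vert v \Vert_{L^m(0,2\pi)}^{1-a}$ for the appropriate $a \in (0,1)$ determined by the scaling relation $\frac{1}{q} = a\left(\frac{1}{p}-1\right) + (1-a)\frac{1}{m}$; in the periodic/interval case one actually uses the inequality in the form $\Vert v \Vert_{L^q} \le C \Vert \partial_\theta v \Vert_{L^p}^{a}\Vert v \Vert_{L^m}^{1-a} + C\Vert v \Vert_{L^m}$, handling the zero-mean reduction or using $V^{m,p}_0$. (3) Raise to the power $q$, so that $\Vert v(t,x,\cdot)\Vert_{L^q(0,2\pi)}^q \le C \Vert \partial_\theta v(t,x,\cdot)\Vert_{L^p(0,2\pi)}^{aq}\Vert v(t,x,\cdot)\Vert_{L^m(0,2\pi)}^{(1-a)q} + (\text{lower order})$. (4) Verify the crucial arithmetic: with $q = p(m+1)$, the exponents work out so that $aq = p$ and $(1-a)q = $ a number; pull $\Vert v \Vert_{L^\infty_{t,x}L^m_\theta}$ out of the integral to the power $(1-a)q$, leaving $\int_{\Omega_T} \Vert \partial_\theta v(t,x,\cdot)\Vert_{L^p(0,2\pi)}^{p}\,dx\,dt = \Vert \partial_\theta v \Vert_{L^p(\Upsilon_T)}^p$. (5) Take the $q$-th root and absorb constants; use Young's inequality if needed to combine the main term and the lower-order term into $C\Vert v\Vert_{V^{m,p}}$.

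\textbf{Main obstacle.} The delicate point is step (4): one must check that with $q = p(m+1)$ the interpolation parameter $a$ is genuinely in $(0,1)$ and yields precisely $aq = p$ — i.e. that $p(m+1)$ is exactly the exponent for which the angular derivative enters to the first power $p$ after integration, so that no further interpolation in $(t,x)$ is needed and the $L^\infty_{t,x}L^m_\theta$ bound suffices to close the estimate. Solving $\frac{1}{q} = a(\frac{1}{p}-1) + (1-a)\frac{1}{m}$ together with $aq = p$ should force $q = p(m+1)$ up to the usual algebra; I would verify this identity explicitly and confirm the admissibility constraint $a \in (0,1)$ (which requires $q < \infty$ and the Sobolev embedding $W^{1,p}(0,2\pi) \hookrightarrow L^q(0,2\pi)$ to hold, automatic in one dimension for all finite $q$). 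A secondary technical nuisance is the treatment of the additive $\Vert v\Vert_{L^m}$ term coming from the interval (versus a mean-zero function on the circle), which is why the statement also records the subspace $V^{m,p}_0$; on $V^{m,p}_0$ one has a clean Poincaré-type inequality and the lower-order term disappears, while on $V^{m,p}$ it is controlled trivially by $\Vert v\Vert_{V^{m,p}}$ since $|(0,2\pi)|$ and $|\Omega_T|$ are finite.
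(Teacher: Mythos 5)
Your proposal is correct and follows essentially the same route as the paper's proof: a pointwise-in-$(t,x)$ one-dimensional Gagliardo--Nirenberg inequality in $\theta$ with interpolation exponent $a=p/q=1/(m+1)$ (which indeed satisfies the scaling relation precisely when $q=p(m+1)$), raised to the power $q$, integrated over $\Omega_T$ with the $L^\infty_{t,x}L^m_\theta$ factor pulled out, followed by Young's inequality; the paper handles the passage from $V^{m,p}_0$ to $V^{m,p}$ by subtracting the angular mean and bounding it via Jensen, which matches your treatment of the lower-order term.
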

\begin{proof}

    \noindent 1. \textit{Inequality for $V_0^{m,p}$}: Assume for the time being that $v \in V^{m,p}_0$. Observe that $p \geq 1 > \frac{m}{m+1}$, \textit{i.e.} $p-m + mp > 0$. The Gagliardo--Nirenberg inequality \cite[\S 1, Theorem 2.1, (2.2-i)]{DiBenedetto} yields, for a.e.~$(t,x) \in \Omega_T$, 
    \begin{equation*}
        \Vert v(t,x,\cdot) \Vert_{L^q(0,2\pi)} \leq C \Vert \partial_\theta v(t,x,\cdot) \Vert_{L^p(0,2\pi)}^{\frac{p}{q}} \Vert v (t,x,\cdot) \Vert^{1-\frac{p}{q}}_{L^m(0,2\pi)}, 
    \end{equation*}
where $q=p(m+1)$. In turn, raising the previous inequality to the power $q$ and integrating over $\Omega_T$ yields 
\begin{equation*}
    \begin{aligned}
        \Vert v \Vert_{L^q(\Upsilon_T)}^q \leq C \bigg(\int_{\Omega_T} \Vert \partial_\theta v(t,x,\cdot) \Vert^{p}_{L^p(0,2\pi)}  \d x \d t \bigg) \esssup_{(t,x)\in \Omega_T}\Vert v (t,x,\cdot) \Vert^{q-p}_{L^m(0,2\pi)}, 
    \end{aligned}
\end{equation*}
and thus 
\begin{equation*}
    \begin{aligned}
        \Vert v \Vert_{L^q(\Upsilon_T)} \leq C \Vert \partial_\theta v \Vert^{\frac{p}{q}}_{L^p(\Upsilon_T)} \Vert v (t,x,\cdot) \Vert^{1-\frac{p}{q}}_{L^\infty(\Omega_T;L^m(0,2\pi))}. 
    \end{aligned}
\end{equation*}
Applying the Young inequality to the previous right-hand side with exponent $q/p>1$ and its H\"older conjugate yields 
\begin{equation*}
    \begin{aligned}
        \Vert v \Vert_{L^q(\Upsilon_T)} \leq C \Vert v \Vert_{V^{m,p}}. 
    \end{aligned}
\end{equation*}
A classical argument shows that the bound above still holds for functions $v \in V^{m,p}$ such that $\int_0^{2\pi} v \d \theta = 0$ a.e.~in $\Omega_T$; \textit{cf.}~\cite[\S1 Remark 2.1]{DiBenedetto}. 

\noindent 2. \textit{Relaxation to $V^{m,p}$}: Let $v \in V^{m,p}$ and define, for a.e.~$(t,x,\theta)$, its mean-zero counterpart 
\begin{equation*}
    w(t,x,\theta) := v(t,x,\theta) - \frac{1}{2\pi}\int_0^{2\pi} v(t,x,\theta) \d \theta. 
\end{equation*}
In view of the remark at the end of Step 1, we deduce that there holds 
\begin{equation*}
    \Vert w \Vert_{L^q(\Upsilon_T)} \leq C \Vert w \Vert_{V^{m,p}}. 
\end{equation*}
Notice that $\partial_\theta w = \partial_\theta v$ and $\Vert w(t,x,\cdot) \Vert_{L^m(0,2\pi)} \leq 2 \Vert v(t,x,\cdot) \Vert_{L^m(0,2\pi)}$ a.e.~in $\Omega_T$, and that Jensen's inequality yields 
\begin{equation*}
    \begin{aligned}
        \Big\Vert \int_0^{2\pi} v(\cdot,\cdot,\theta) \d \theta \Big\Vert_{L^q(\Upsilon_T)} \leq & C\bigg( \int_0^{2\pi} \int_{\Omega_T} \bigg(  \int_0^{2\pi} |v(t,x,\theta')|^m \d \theta' \bigg)^{\frac{q}{m}} \d x \d t \d \theta \bigg)^{\frac{1}{q}} \\ 
        \leq & C \esssup_{(t,x)\in \Omega_T} \Vert v(t,x,\cdot) \Vert_{L^m(0,2\pi)}. 
    \end{aligned}
\end{equation*}
It therefore follows from the Minkowski inequality that 
\begin{equation*}
    \begin{aligned}
       \Vert v \Vert_{L^q(\Upsilon_T)} \leq & \Vert w \Vert_{L^q(\Upsilon_T)} +  \frac{1}{2\pi} \Big\Vert \int_0^{2\pi} v \d \theta \Big\Vert_{L^q(\Upsilon_T)} \\ 
        \leq & C \bigg( \Vert \underbrace{\partial_\theta w }_{=\partial_\theta v}\Vert_{L^p(\Upsilon_T)}  + \esssup_{(t,x)\in \Omega_T} \Vert v(t,x,\cdot) \Vert_{L^m(0,2\pi)} \bigg), 
    \end{aligned}
\end{equation*}
    and the result follows. 
\end{proof}

\section{Regularised Initial Data}\label{sec:appendix initial data}.

\begin{proof}[Proof of Lemma \ref{lem:regularised initial data}]
The strong convergences in the statement of the lemma follow directly from recalling the mollification technique of \cite[Appendix C]{bbes}. It remains to verify the estimates; both those depending on $\varepsilon$ and those independent of $\varepsilon$.

\noindent 1. \textit{Bounds depending on $\varepsilon$ }: using the non-negativity of $\rho_0$, there holds 
\begin{equation*}
   0< \frac{\varepsilon}{2} \leq \rho^\varepsilon_0(x) \leq \frac{\varepsilon}{2} + (1-\varepsilon) \int_{\mathbb{R}^2} \alpha_\varepsilon \d x = 1- \frac{\varepsilon}{2} < 1, 
\end{equation*}
whence, for all $\varepsilon \in (0,1)$ and a.e.~$x \in \Omega$, 
\begin{equation}\label{eq:bounds on 1-rho initial eps}
1 - \frac{\varepsilon}{2} \geq 1-\rho^\varepsilon_0(x) \geq \frac{\varepsilon}{2}. 
\end{equation}
Similarly, we note the bound 
\begin{equation}\label{eq:bound on f initial eps}
    f^0_\varepsilon \geq \frac{\varepsilon}{4\pi}, 
\end{equation}
which holds due to the non-negativity of $f_0$.

\noindent 2. \textit{Uniform $L^2$-bound on $u^\varepsilon_0$ }: we estimate the quantity 
\begin{equation*}
   \begin{aligned}
       \varepsilon \int_\Upsilon |u^\varepsilon_0|^2 \d \bxi = \varepsilon \int_{\{0<f^\varepsilon_0/(1-\rho^\varepsilon_0)<1\}} |u^\varepsilon_0|^2 \d \bxi + \varepsilon \int_{\{f^\varepsilon_0/(1-\rho^\varepsilon_0) \geq 1\}} |u^\varepsilon_0|^2 \d \bxi =: I^\varepsilon_1 + I^\varepsilon_2. 
   \end{aligned} 
\end{equation*}
Using that there exists $C>0$ such that $|\log s|\mathds{1}_{\{0<s<1\}} \leq C |s|^{-1/2}$, the monotonicity of $s \mapsto s^{1/2}$ for $s>0$, and the estimates \eqref{eq:bounds on 1-rho initial eps}-\eqref{eq:bound on f initial eps}, the first integral is controlled by 
\begin{equation*}
    \begin{aligned}
        I^\varepsilon_1 \leq \varepsilon C\int_{\Upsilon} (1-\rho^\varepsilon_0)|{f^\varepsilon_0}|^{-1} \d \bxi \leq \varepsilon C\int_\Upsilon \varepsilon^{-1} \d \bxi \leq C, 
    \end{aligned}
\end{equation*}
and the right-hand side is uniformly bounded independently of $\varepsilon$. Similarly, using that $|\log s| \mathds{1}_{\{s \geq 1\}} \leq |s|^{\frac{1}{2}}$, there holds 
\begin{equation*}
    I^\varepsilon_2 \leq C \varepsilon \int_\Upsilon \varepsilon^{-1} f^\varepsilon_0 \d \bxi\leq C \int_\Omega \rho^\varepsilon_0 \d x \leq C, 
\end{equation*}
and the right-hand side is again uniformly bounded.

\noindent 3. \textit{Uniform bound on entropy functional}: the aforementioned strong convergences $f^\varepsilon_0 \to f_0$ and $\rho^\varepsilon_0 \to \rho_0$ imply that, up to a subsequence which we do not relabel, there holds $f^\varepsilon_0 \to f_0$ a.e.~in $\Upsilon$ and $\rho^\varepsilon_0 \to \rho_0$ a.e.~in $\Omega$. 

Since $s \mapsto s \log s$ is locally bounded, the bound $0 < \rho^\varepsilon_0 < 1$ implies the uniform boundedness of $(1-\rho^\varepsilon_0)\log(1-\rho^\varepsilon_0)$, whence the Dominated Convergence Theorem implies 
\begin{equation*}
   \lim_{\varepsilon\to 0} \int_\Omega (1-\rho^\varepsilon_0)\log(1-\rho^\varepsilon_0) \d x = \int_\Omega (1-\rho_0)\log(1-\rho_0) \d x \leq C, 
\end{equation*}
for $C$ depending only on the maximum value of $s \log s$ over the interval $[0,1]$ and $\Omega$. 

Similarly, we decompose 
\begin{equation*}
    \int_\Upsilon f^\varepsilon_0 \log f^\varepsilon_0 \d \bxi = \int_{ \{0 < f^\varepsilon_0 < 1\}} f^\varepsilon_0 \log f^\varepsilon_0 \d \bxi + \int_{ \{f^\varepsilon_0 \geq 1\}} f^\varepsilon_0 \log f^\varepsilon_0 \d \bxi, 
\end{equation*}
and the local boundedness of $s\log s$ and the Dominated Convergence Theorem that 
\begin{equation*}
    \lim_{\varepsilon \to 0}\int_{ \{0 < f^\varepsilon_0 < 1\}} f^\varepsilon_0 \log f^\varepsilon_0 \d \bxi = \int_{ \{0 < f_0 < 1\}} f_0 \log f_0 \d \bxi \leq C, 
\end{equation*}
where the right-hand side of the above is bounded by virtue of the function $s \log s$ being bounded over the interval $s \in (0,1)$. Meanwhile, the assumption $f_0 \in L^p(\Upsilon)$ for $p>1$ and the estimate $|\log s| \mathds{1}_{\{s \geq 1 \}} \leq C_p |s|^{p-1}$ imply 
\begin{equation*}
    \int_{ \{f^\varepsilon_0 > 1\}} f^\varepsilon_0 \log f^\varepsilon_0 \d \bxi \leq C_p\int_{ \{f^\varepsilon_0 > 1\}} |f^\varepsilon_0|^p \d \bxi \leq C_p(1+\Vert f_0 \Vert_{L^p(\Upsilon)}^p), 
\end{equation*}
and the right-hand side is bounded independently of $\varepsilon$. 
\end{proof}

The next proof consists in a quantitative diagonal approach; the argument is classical, but it is nevertheless useful as it helps identify exactly which diagonal subsequence we select. We recall that for $\{\sigma'(n)\}_n$ to be a subsequence of $\{\sigma(n)\}_{n\in\mathbb{N}}$, it is sufficient that there holds: $\{\sigma'(n)\}_n \subset \{\sigma(n)\}_n$, $\sigma'(n) \geq \sigma(n)$, and $\sigma'(n+1)>\sigma'(n)$ for all $n$. 

\begin{proof}[Proof of Lemma \ref{lem:convergence of initial data}]

For the purposes of the argument that follows, it is crucial that we replace the continuum $\{\varepsilon\}_{\varepsilon \in (0,1)}$ with a countable set. For this reason, we select $1>\varepsilon_1 > \varepsilon_2 > \dots > 0$ and define 
\begin{equation*}
    f^m_n(0) := f^{\varepsilon_m}_n(0), \qquad f^m_0 := f^{\varepsilon_m}_0, 
\end{equation*}
so that the limit $m\to\infty$ morally corresponds to $\varepsilon \to 0$. 

Recall from equation \eqref{eq:strong conv in Lq for initial f eps n} of \S \ref{sec:existence galerkin} that for each fixed $m$, we have 
\begin{equation*}
    \Vert f^m_n(0) - f^m_0 \Vert_{L^p(\Upsilon)} \to 0 \quad \text{as } n \to \infty. 
\end{equation*}
In turn, for $m=1$, there exists a subsequence $\{\sigma_1(n)\}_{n}$ of $\mathbb{N}$ such that, for all $n$, 
\begin{equation}\label{eq:first subsequence}
    \Vert f^1_{\sigma_1(n)}(0) - f^1_0 \Vert_{L^p(\Upsilon)} \leq \frac{1}{n}. 
\end{equation}
Note in particular that the above implies, for all $\ell \geq n$, 
\begin{equation}\label{eq:skipping steps}
    \Vert f^1_{\sigma_1(\ell)}(0) - f^1_0 \Vert_{L^p(\Upsilon)} \leq \frac{1}{n}. 
\end{equation}

Then, for $m=2$, since we also know that $\Vert f^2_{n}(0) - f^2_0 \Vert_{L^p(\Upsilon)} \to 0$ as $n\to\infty$, we may construct a subsequence $\{\sigma_2(n)\}_{n}$ of $\{\sigma_1(n)\}_n$ such that, for all $n$, 
\begin{equation*}
    \Vert f^2_{\sigma_2(n)}(0) - f^2_0 \Vert_{L^p(\Upsilon)} \leq \frac{1}{n}. 
\end{equation*}
Furthermore, since $\{\sigma_2(n)\}_n$ is a subsequence of $\{\sigma_1(n)\}_n$, the bound \eqref{eq:skipping steps} implies, for all $n$, 
\begin{equation*}
    \Vert f^1_{\sigma_2(n)}(0) - f^1_0 \Vert_{L^p(\Upsilon)} \leq \frac{1}{n}. 
\end{equation*}

We continue iteratively, and construct for each $m\in\mathbb{N}$ a subsequence $\{\sigma_m(n)\}_n$ of all previous $m-1$ subsequences, such that there holds, for all $j \in \{1,\dots,m\}$ and all $n$, 
\begin{equation}\label{eq:just before choosing the diagonal}
    \Vert f^j_{\sigma_m(n)}(0) - f^j_0 \Vert_{L^p(\Upsilon)} \leq \frac{1}{n}. 
\end{equation}
In turn, we define the diagonal sequence 
\begin{equation}\label{eq:diagonal subseq is defined}
    f^m := f^m_{\sigma_m(m)}, 
\end{equation}
and note that \eqref{eq:just before choosing the diagonal} implies, for all $m$, 
\begin{equation*}
    \Vert f^m (0) - f^m_0 \Vert_{L^p(\Upsilon)} \leq \frac{1}{m}. 
\end{equation*}
Using the above and the strong convergence of $f^m_0 \to f_0$ in $L^p(\Upsilon)$ of Lemma \ref{lem:regularised initial data}, the triangle inequality yields 
\begin{equation*}
    \Vert f^m(0) - f_0 \Vert_{L^p(\Upsilon)} \leq  \underbrace{\Vert f^m(0) - f_0^m \Vert_{L^p(\Upsilon)}}_{\leq 1/m} +  \underbrace{\Vert f_0^m - f_0 \Vert_{L^p(\Upsilon)}}_{\to 0} \to 0 
\end{equation*}
as $m\to\infty$, and the proof is complete.    
\end{proof}

\vspace{0.3cm}

\textbf{Acknowledgements: } 

MB acknowledges support from DESY (Hamburg, Germany), a member of the Helmholtz Association HGF, and the German Science Foundation (DFG) through CRC TR 154, subproject C06. SMS acknowledges support from Centro di Ricerca Matematica Ennio De Giorgi. 

The authors report there are no competing interests to declare.

\bibliographystyle{siam}
\bibliography{active.bib}

\end{document}